\theoremstyle{plain}
\newtheorem{theorem}{Theorem}[section]
\newtheorem{lemma}[theorem]{Lemma}
\newtheorem{proposition}[theorem]{Proposition} 
\newtheorem{corollary}[theorem]{Corollary}
\theoremstyle{definition}
\begin{document}

\author{Riccardo Molinarolo\thanks{Dipartimento per lo Sviluppo Sostenibile e la Transizione Ecologica, Universit\`a degli Studi del Piemonte Orientale A. Avogadro, Piazza Sant'Eusebio 5, 13100, Vercelli, Italy. Email: riccardo.molinarolo@uniupo.it}}

\title{Existence result for a nonlinear mixed boundary value problem for the heat equation}

\date{\today}

\maketitle

\noindent
{\bf Abstract:} 
In this paper we study the existence of solutions in parabolic Schauder space of a nonlinear mixed boundary value problem for the heat equation in a perforated domain. From a given regular open set $\Omega\subseteq\mathbb{R}^n$ we remove a cavity $\omega\subseteq \Omega$. On the exterior boundary of $\Omega\setminus\overline{\omega}$ we prescribe a Neumann boundary condition, while on the interior boundary we set a nonlinear Robin-type condition. Under suitable assumptions on the data and by means of Leray Schauder Fixed-Point Theorem, we prove the existence of (at least) one solution $u \in C_{0}^{\frac{1+\alpha}{2}; 1+\alpha}([0,T] \times (\overline{\Omega} \setminus \omega))$.

\vspace{9pt}

\noindent
{\bf Keywords:}  heat equation, shape perturbation, layer potentials, mixed problem, shape sensitivity analysis.
\vspace{9pt}

\noindent   
{{\bf 2020 Mathematics Subject Classification:}}  35K20; 31B10; 47H30;  45A05.

\section{Introduction} 
Boundary value problems for the heat equation have been extensively studied in literature by several authors and are of great interest in various applications. Without any hope of completeness, we mention the classical monograph of O.A.~Lady\v{z}enskaja,  V.A.~Solonnikov, and N.N.~Ural'ceva \cite{LaSoUr68} and the books of Friedmann \cite{Fr08} and Lieberman \cite{Li96}. 

In particular, the mixed Neumann-Robin boundary value problem for the heat equation that we address in this paper is inspired by the recent works by Bacchelli, Di Cristo, Sincich, and Vessella \cite{BaDiSiVe14} and by Nakamura and Wang \cite{NaWa17}. Those authors considered the task of reconstructing unknown inclusions inside a heat conductor from boundary measurement. Parabolic equations with mixed boundary conditions typically arise in the context of inverse problems: for a comprehensive motivation we refer to \cite{BiCeFaIn10,DiRoVe06}. In general, the goal is to analyse situations where a corrosion might occur on part of a boundary and the aim is to recover information on this damaged part that cannot be directly inspected. Moreover, the stability issue (i.e., the continuous dependence of the solutions from the boundary data) and the study of the internal structure of a heat
conductor, such as size, location and shape of anomalies, are widely considered. We also mention that the stationary case, i.e., elliptic equations, has been studied in the theory of inverse problem. Here, we refer to Bacchelli \cite{Ba09}, Inglese \cite{In97} and the reference therein. 

Linear boundary conditions are typically considered in this context. To tackle nonlinear boundary conditions one can use potential theory, a tool that has proven to be very powerful in studying boundary value problems. Furthermore, regarding some questions arising in the theory of inverse problems, in many instances the study not only of the existence of a solution of a boundary value problem, but also its regularity, uniqueness and its dependence upon the perturbation of the domain of definition have been extensively investigated in the literature. Here, we refer to the monographs on ``shape optimization'' by Henrot and Pierre \cite{HePi18}, Novotny and Soko\l owski \cite{NoSo13}, and Soko\l owski and Zol\'esio \cite{SoZo92}.

Moreover, regarding shape perturbation problems, we also mention the method developed by Lanza de Cristoforis and collaborators, the ``functional analytic approach'', which aims to prove differentiability properties of the so-called ``domain-to-solution'' map.  In this context, for elliptic differential operators, we refer to \cite{DaLa10,  LaMu11, LaRo04}, while for the parabolic case we mention \cite{DaLu23, LaLu17,LaLu19}. Finally, also transmission type of boundary conditions and space-periodic geometrical setting have been studied, see \cite{DaLuMoMu24,DaMoMu19,DaMoMu21,Mo19,LuMu18,LuMu20} and reference therein.

The problem we address in this paper stems from \cite{DaLuMoMu24.2}, where the authors have successfully used the functional analytic approach to study a perturbed version of the problem considered in this work (see \eqref{princeq} below), when the shape of the cavity is deformed by a suitable regular diffeomorphism. Assuming the existence of a solution when the cavity is fixed (i.e., the diffeomorphism is the identity map on a reference domain), they have proved that the solution continues to exists for diffeomorphisms ``closed'' to the identity map. Moreover, the ``domain-to-solution'' map is of class $C^\infty$.

Therefore, the present paper aims to establish a direct and self-contained existence result, by means of Leray Schauder Fixed-Point Theorem, for a nonlinear mixed boundary value problem for the heat equation in a perforated domain. In particular, this result justifies the assumption of the existence of a solution for the unperturbed problem analysed in \cite{DaLuMoMu24.2}.

In order to introduce the geometric framework of our problem, we fix once and for all 
\begin{equation*}
    n \in \mathbb{N} \setminus \{0,1\},
\end{equation*}
that will be the dimension of the Euclidean space $\mathbb{R}^n$. Then, we fix a regularity parameter and a final time, namely
\begin{equation*}
    \alpha \in ]0,1[ \text{ and } T>0,
\end{equation*}
and we take two sets $\Omega$ and $\omega$ that satisfy the following conditions:
\begin{equation*}
	\begin{split}
		&\mbox{$\Omega$, $\omega$ are bounded open connected subsets of $\mathbb{R}^n$ of class $C^{1,\alpha}$,} 
		\\
		&\mbox{with connected exteriors  $\mathbb{R}^n\setminus \overline{\Omega}$ and $\mathbb{R}^n\setminus \overline{\omega}$ and $\overline{\omega}\subseteq\Omega$}.
	\end{split}
\end{equation*}
We consider a mixed boundary value problem for the heat equation in a perforated domain, obtained by removing from the given domain $\Omega$ a cavity $\omega$. Therefore, to define the boundary conditions, we fix two functions
\begin{equation*}
\begin{aligned}
f \in C_{0}^{\frac{\alpha}{2}; \alpha}([0,T] \times \partial\Omega), G \in C^{0}([0,T] \times \partial\omega \times \mathbb{R}) \text{ with } G(0,x,0)=0 \text{ for every } x \in \partial\omega,
\end{aligned}
\end{equation*}
which will satisfy suitable assumptions described in Section \eqref{sec existence result} (cf. conditions \eqref{condition NG} and \eqref{condition G}). The function $f$ plays the role of the Neumann datum on the outer boundary $\partial \Omega$. Instead, $G$ determines the non linear Robin-type condition on the inner boundary $\partial \omega$. We consider the following nonlinear problem for a function $u \in C_{0}^{\frac{1+\alpha}{2}; 1+\alpha}([0,T] \times (\overline{\Omega} \setminus \omega))$:
\begin{equation}\label{princeq}
    \begin{cases}
	\partial_t u - \Delta u = 0 & \quad\text{in } ]0,T] \times (\Omega \setminus \overline{\omega}), 
	\\
    \nu_{\Omega} \cdot \nabla  u (t,x) = f(t,x) & \quad \forall (t,x) \in [0,T] \times \partial \Omega,
	\\
	\nu_{\omega} \cdot \nabla  u (t,x) = G(t,x,u(t,x)) & \quad \forall (t,x) \in [0,T] \times \partial \omega,
    \\
    u(0,\cdot)=0 & \quad \text{in } \overline{\Omega} \setminus \omega,
    \end{cases}
\end{equation}
where $\nu_{\Omega}$ and $\nu_{\omega}$ denote the outward unit normal vector field to $\partial \Omega$ and to $\partial  \omega$, respectively.

The aim of the paper is to prove an existence result for solution in the parabolic Schauder space  $C_{0}^{\frac{1+\alpha}{2}; 1+\alpha}([0,T] \times (\overline{\Omega} \setminus \omega))$ for problem \eqref{princeq}. The main result is contained in Theorem \ref{thm u_0}. The proof is based on the Leray-Schauder Fixed-Point Theorem: we briefly describe our strategy. We first establish a representation result for caloric functions in $\overline{\Omega}\setminus \omega$ in terms of single layer heat potential with suitable densities (Lemma \ref{lemma rappr}). This result is based on the isomorphism property of the single layer heat potential given by Theorem \ref{thm V iso}, which proof relies on existence and uniqueness results for the exterior Dirichlet and Neumann problems for the heat equation in parabolic Schauder spaces (we provide a self-contained proof of those results via Fredholm Alternative in Theorems \ref{app thm existence ext dir}\,-\,\ref{app thm existence ext neu}).

Then, we introduce a linear version of problem \eqref{princeq} and we study uniqueness result in $C_{0}^{\frac{1+\alpha}{2}; 1+\alpha}([0,T] \times (\overline{\Omega} \setminus \omega))$ for such problem. Along with that, we investigate the mapping properties of an auxiliary boundary integral operator, $\mathcal{J}_\beta$, arising from the integral formulation of that problem (Proposition \ref{prop J_beta}). Then, we derive the integral system formulation of problem \eqref{princeq}: by means of the auxiliary operator $\mathcal{T}_\beta$, such system is written into a fixed-point type equation (Proposition \ref{Tcontcomp}). Under suitable conditions on the function $f$ and $G$, we establish the existence of at least one fixed point (Proposition \ref{prop mu_0}). Then, using the representation result for caloric functions and the mapping properties of layer heat potentials, our main result is proved.

The paper is organised as follows: in Section \ref{s:prel} we recall some standard notation, we introduce parabolic Schauder spaces and we collect some mapping properties on layer heat potentials, as well as existence results for exterior Dirichlet and Neumann problem for the heat equation. In Section \ref{sec existence result} we prove the main result of the paper, Theorem \ref{thm u_0}, on the existence of (at least) one solution in parabolic Schauder space for problem \eqref{princeq}. An Appendix on uniqueness results for Dirichlet and Neumann problems for the heat equation concludes the paper.

\section{Preliminaries}\label{s:prel}
\subsection{Notation and parabolic Schauder spaces}
The symbol $\mathbb{N}$ denotes the set of natural numbers including $0$. Throughout the paper,
\[
n \in \mathbb{N} \setminus \{0, 1\}
\]
denotes the dimension of the Euclidean ambient space $\mathbb{R}^n$. The ball of center $x \in \mathbb{R}^n$ and radius $r\geq 0$ is denoted by $B(x,r)$. The inverse of an
invertible function $f$ is denoted by $f^{(-1)}$, while the reciprocal of function $g$ is denoted by $g^{-1}$.

If $\Omega$ is an open subset of $\mathbb{R}^n$, then $\overline{\Omega}$ denotes
the closure of $\Omega$, $\partial \Omega$ denotes the boundary of $\Omega$ and $\Omega^- := \mathbb{R}^n \setminus \overline{\Omega}$ denotes the exterior of $\Omega$. 

Let $m\in \mathbb{N}$ and $\alpha \in ]0,1[$. For the definition of open subsets of $\mathbb{R}^n$ of class $C^m$
and $C^{m,\alpha}$, and of the Schauder spaces $C^{m,\alpha}(\overline{\Omega})$ and $C^{m,\alpha}(\partial \Omega)$, we refer to Gilbarg and
Trudinger \cite[pp. 52, 94]{GiTr83}. In particular, we use the subscript ``$b$'' to denote the subspace consisting of those functions that are bounded.

Let $\alpha \in ]0,1[$, $T>0$ and $\Omega$ is an open subset of $\mathbb{R}^n$. Then we set 
\begin{equation*}
    C^{\frac{\alpha}{2},\alpha}([0, T] \times \overline{\Omega}) := \Big\{ u \in C^{0}_{b}([0, T] \times \overline{\Omega}) \, \colon \|u\|_{C^{\frac{\alpha}{2};\alpha}([0, T] \times \overline{\Omega})} < +\infty \Big\},
\end{equation*}
where
\begin{align*}
    \|u\|_{C^{\frac{\alpha}{2};\alpha}([0, T] \times \overline{\Omega})} :=& \sup_{[0, T] \times \overline{\Omega}} |u| + \sup_{\substack{t_1,t_2 \in [0,T] \\ t_1 \neq t_2}} \,\sup_{x \in \overline{\Omega}} \frac{|u(t_1,x) - u(t_2,x)|}{|t_1-t_2|^\frac{\alpha}{2}}
    \\
    & + \sup_{t \in [0,T]} \,\sup_{\substack{x_1,x_2 \in \overline{\Omega} \\ x_1 \neq x_2}} \frac{|u(t,x_1) - u(t,x_2)|}{|x_1-x_2|^\alpha} .
\end{align*}
Then we set
\begin{align*}
    C^{\frac{1+\alpha}{2}; 1+\alpha}([0, T] \times \overline{\Omega}) := \Big\{ u \in C^{0}_{b}([0, T] \times \overline{\Omega}) \, \colon \partial_{x_i} u \in C^{0}_b([0, T] \times \overline{\Omega}) \text{ for all } i \in \{1,\dots,n\}, 
    \\
    \text{and such that }
    \|u\|_{C^{\frac{1+\alpha}{2}; 1+\alpha}([0, T] \times \overline{\Omega})}
    < +\infty \Big\},
\end{align*}
where
\begin{align*}
    \|u\|_{C^{\frac{1+\alpha}{2}; 1+\alpha}([0, T] \times \overline{\Omega})} :=& \sup_{[0, T] \times \overline{\Omega}} |u| + \sum_{i=1}^{n} \|\partial_{x_i} u\|_{C^{\frac{\alpha}{2}; \alpha}([0, T] \times \overline{\Omega})} 
    \\
    & + \sup_{\substack{t_1,t_2 \in [0,T] \\ t_1 \neq t_2}} \,\sup_{x \in \overline{\Omega}} \frac{|u(t_1,x) - u(t_2,x)|}{|t_1-t_2|^{\frac{1+\alpha}{2}}} .
\end{align*}
If $\Omega$ is an open subset of $\mathbb{R}^n$ of class $C^{1,\alpha}$, we can use the local parametrization of $\partial \Omega$ to define the spaces
$C^{\frac{j+\alpha}{2}; j+\alpha}([0, T] \times \partial \Omega)$, $j \in \{0,1\}$, in the natural way. More in general, in a similar way we can define the spaces $C^{j,\alpha}(\mathcal{M})$ and $C^{\frac{j+\alpha}{2}; j+\alpha}([0, T] \times \mathcal{M})$, $j \in \{0,1\}$, on a manifold $\mathcal{M}$ of class $C^{j,\alpha}$ imbedded in $\mathbb{R}^n$ (see \cite[Appendix A]{DaLu23}). In essence, a function of class $C^{\frac{j+\alpha}{2}; j+\alpha}$ is $\left(\frac{j+\alpha}{2}\right)$-H\"older continuous in the time variable,
and $(j,\alpha)$-Schauder regular in the space variable. 

Finally, we use the subscript ``$0$'' to denote the subspace consisting of functions that are
zero at $t = 0$. Namely, for $j \in \{0,1\}$, we set
\[
C_0^{\frac{j+\alpha}{2}; j+\alpha}([0, T] \times \overline{\Omega}):= 
\Big\{ u \in C^{\frac{j+\alpha}{2}; j+\alpha}([0, T] \times \overline{\Omega}) \,:\, u(0,x) = 0 \quad \forall x \in \overline{\Omega} \Big\}.
\]
Then the spaces $C_0^{\frac{j+\alpha}{2}; j+\alpha}([0, T] \times \partial \Omega)$ and $C_0^{\frac{j+\alpha}{2}; j+\alpha}([0, T] \times \mathcal{M})$
are similarly defined.

For functions in parabolic Schauder spaces, the partial derivative $D_x$ with respect to the space variable $x$ will be denoted by $\nabla$, while we will mainten the notation $\partial_t$ for the derivative with respect to the time variable $t$.

For a comprehensive introduction to parabolic Schauder spaces we refer the reader to
classical monographs on the field, for example Lady\v{z}enskaja, Solonnikov, and Ural'ceva
\cite[Ch. 1]{LaSoUr68} (see also \cite{LaLu17,LaLu19}).

Finally, we introduce a notation for superposition operators in parabolic Schauder spaces: if $G$ is a function from
$[0,T] \times \partial \omega \times \mathbb{R}$ to $\mathbb{R}$, then we denote by $\mathcal{N}_G$ the nonlinear superposition operator that take a function $u$ from $[0,T]\times \partial\omega$ to $\mathbb{R}$ to the function $\mathcal{N}_G(u)$ defined by
\[
\mathcal{N}_G(u) (t,x) := G(t,x,u(t,x)) \quad \forall (t,x) \times [0,T]\times \partial\omega.
\]
Here the letter `$\mathcal{N}$' stands for `Nemytskii operator'.

\subsection{Layer heat potentials}
In this section we collect some well-known facts on the layer heat potentials. For proofs
and detailed references we refer to Lady\v{z}enskaja, Solonnikov, and Ural'ceva
\cite{LaSoUr68}. 

Let $S_{n} : \mathbb{R}^{1+n} \setminus
\{(0,0)\}\to \mathbb{R}$ be defined by
\[
S_{n}(t,x):=
\left\{
\begin{array}{ll}
\frac{1}{(4\pi t)^{\frac{n}{2}} }e^{-\frac{|x|^{2}}{4t}}&{\mathrm{if}}\ (t,x)\in ]0,+\infty[ \times{\mathbb{R}}^{n}\,, 
\\
0 &{\mathrm{if}}\ (t,x)\in (]-\infty,0]\times{\mathbb{R}}^{n})\setminus\{(0,0)\}.
\end{array}
\right.
\]
It is well known that $S_n$ is the fundamental solution of the heat operator $\partial_t-\Delta$ in $\mathbb{R}^{1+n} \setminus \{(0,0)\}$.

We now introduce the layer heat potentials. Let $\alpha \in ]0,1[$ and $T>0$. Let
\[
\Omega \text{ be an open bounded subset of }\mathbb{R}^n \text{ of class }C^{1,\alpha}.
\]
For a density $\mu \in L^\infty\big([0,T] \times \partial\Omega\big)$, the single layer heat potential is defined as
\begin{equation*} 
    v_{\Omega} [\mu](t,x) := \int_{0}^{t} \int_{\partial \Omega} S_{n}(t-\tau,x-y) \mu(\tau, y)\,d\sigma_y d\tau \quad \forall\,(t,x) \in [0, T] \times \mathbb{R}^n,
\end{equation*}
while the double layer heat potential is defined as
\begin{equation*} 
    w_{\Omega} [\mu](t,x) := \int_{0}^{t} \int_{\partial \Omega} \frac{\partial}{\partial \nu_\Omega(y)} S_{n}(t-\tau,x-y) \mu(\tau, y)\,d\sigma_y d\tau \quad \forall(t,x) \in [0, T] \times \mathbb{R}^n.
\end{equation*}
Moreover, we set
\begin{equation*}
    W_{\partial\Omega} [\mu](t,x) := \int_{0}^{t} \int_{\partial \Omega} \frac{\partial}{\partial \nu_\Omega(y)} S_{n}(t-\tau,x-y) \mu(\tau, y)\,d\sigma_y d\tau \quad \forall(t,x) \in [0, T] \times \partial\Omega,
\end{equation*}
\begin{equation*}
    W^*_{\partial \Omega}[\mu](t,x) := \int_{0}^t\int_{\partial\Omega} 
    \frac{\partial}{\partial \nu_\Omega(x)} S_{n}(t-\tau,x-y) \mu(\tau,y)\,d\sigma_yd\tau \quad \forall\,(t,x) \in [0,T] \times \partial\Omega,
\end{equation*}
and 
\begin{equation*}
    V_{\partial\Omega}[\mu] := v_{\Omega}[\mu]_{|[0,T]\times \partial\Omega}.
\end{equation*}
The map $V_{\partial\Omega}[\mu]$ is the trace of the single layer heat potential on $[0,T]\times \partial\Omega$, while the maps $W_{\partial\Omega}$ and $W^*_{\partial\Omega}[\mu]$ are related to the jump formulas of the double layer heat potential and the normal
derivative of the single layer heat potential on $[0,T]\times \partial\Omega$ respectively (see Theorem \ref{thmdl} (iii) and Theorem \ref{thmsl} (iii)).

Layer heat potentials enjoy properties similar to those of their standard elliptic counterpart. In the following theorem we collect some well known properties for the single layer heat potential. For the proof we refer to \cite{DaLu23,LaLu17,LaLu19}.

\begin{theorem}\label{thmsl}
Let $\alpha \in ]0,1[$ and $T>0$. Let $\Omega$ be a bounded open subset of $\mathbb{R}^n$ of class $C^{1,\alpha}$. Then the following statements hold.
\begin{itemize}

\item[(i)] Let $\mu \in L^\infty([0,T] \times \partial\Omega)$. Then $v_{\Omega}[\mu]$ is continuous and 
$v_{\Omega}[\mu] \in C^\infty(]0,T[ \times (\mathbb{R}^n \setminus \partial\Omega))$. 
Moreover $v_{\Omega}[\mu]$ solves the heat equation 
in $]0,T]\times (\mathbb{R}^n \setminus \partial\Omega)$.

\item[(ii)] Let $v_\Omega^+[\mu]$ and $v_\Omega^-[\mu]$ denote 
the restrictions of $v_\Omega[\mu]$ to $[0,T] \times \overline{\Omega}$ and to $[0,T]\times \overline{\Omega^-}$, respectively. Then, the map from  $C_0^{\frac{\alpha}{2};  \alpha}([0,T] \times \partial\Omega)$ to  $C_{0}^{\frac{1+\alpha}{2}; 1+\alpha}([0,T] \times \overline{\Omega})$ that takes $\mu$ to $v_{\Omega}^+[\mu]$ is linear and continuous. If $R>0$ is such that $\overline{\Omega} \subseteq B(0,R)$, then the map from  $C_0^{\frac{\alpha}{2};  \alpha}([0,T] \times \partial\Omega)$ to  $C_{0}^{\frac{1+\alpha}{2}; 1+\alpha}([0,T] \times (\overline{B(0,R)}\setminus \Omega^-))$ that associates $\mu$ with $v_{\Omega}^-[\mu]$ is also linear and continuous.

\item[(iii)] Let $\mu \in C_0^{\frac{\alpha}{2};  \alpha}([0,T] \times \partial\Omega)$. Then the following jump formulas hold:
\begin{equation*} 
\frac{\partial}{\partial \nu_\Omega}v_{\Omega}^\pm[\mu](t,x)  = \pm \frac{1}{2}\mu(t,x)
 +W_{\partial\Omega}^*[\mu](t,x), \quad \forall (t,x) \in [0,T] \times \partial\Omega.
\end{equation*}
\end{itemize}
\end{theorem}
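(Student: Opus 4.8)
The plan is to reduce all three statements to sharp pointwise bounds on the fundamental solution $S_n$ and its parabolic derivatives, and then to run the classical layer-potential arguments of \cite{LaSoUr68} in the parabolic Hölder scale. The estimates I would rely on throughout are the scale-invariant bounds $|\partial_t^j\nabla_x^k S_n(t,x)|\le C\,t^{-(n+k+2j)/2}\,e^{-c|x|^2/t}$ for $t>0$, which encode the parabolic homogeneity (space scales like $t^{1/2}$, each spatial derivative costs $t^{-1/2}$ and each time derivative costs $t^{-1}$). For statement (i), I would first note that if $x\notin\partial\Omega$ then $|x-y|\ge \mathrm{dist}(x,\partial\Omega)>0$ for all $y\in\partial\Omega$, so the Gaussian factor $e^{-c|x-y|^2/(t-\tau)}$ and all of its $(t,x)$-derivatives decay super-polynomially as $\tau\to t^-$; this dominates every derivative $\partial_t^j\nabla_x^k S_n(t-\tau,x-y)$ by a function integrable in $(\tau,y)$ locally uniformly in $(t,x)$. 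Differentiation under the integral sign to all orders then gives $v_\Omega[\mu]\in C^\infty(]0,T[\times(\mathbb{R}^n\setminus\partial\Omega))$, and since $(\partial_t-\Delta)S_n=0$ off the origin the potential solves the heat equation there. Continuity of $v_\Omega[\mu]$ across $[0,T]\times\partial\Omega$ (the absence of a jump, in contrast with the double layer) follows from the weak, parabolically integrable, singularity of $S_n$ itself: splitting $\partial\Omega$ into a small surface cap around the target point and its complement shows the integral converges uniformly up to the boundary from both sides to the same value.

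The heart of the matter is (ii). Here I would establish boundedness of $v_\Omega^+[\mu]$ from the integrability of $S_n$, and then handle the three parabolic Hölder seminorms encoded in the target norm: the $\alpha$-Hölder continuity in space and the $\tfrac{\alpha}{2}$-Hölder continuity in time of each $\partial_{x_i}v_\Omega^+[\mu]$, together with the $\tfrac{1+\alpha}{2}$-Hölder continuity in time of $v_\Omega^+[\mu]$ itself. For $x\notin\partial\Omega$ the gradient is the absolutely convergent integral with kernel $\partial_{x_i}S_n$; the real content is to show it extends Hölder-continuously up to $\partial\Omega$ from each side, and this is exactly where the regularity $\mu\in C_0^{\frac{\alpha}{2};\alpha}([0,T]\times\partial\Omega)$ enters. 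Each increment is estimated by the classical freezing technique: I split the integration domain into a parabolic neighbourhood of the singularity whose size is comparable to the increment and its complement, bound the near part using the local integrability of the (differentiated) kernel, and subtract the value of $\mu$ at the target point to cancel the otherwise non-integrable tail on the far part, exploiting the Hölder seminorms of $\mu$. Membership in the subspace with subscript $0$ is immediate, since the time integral $\int_0^t$ is empty at $t=0$ and hence $v_\Omega[\mu](0,\cdot)\equiv 0$. The same estimates, localised to the bounded shell $\overline{B(0,R)}\setminus\Omega^-$, yield the asserted continuity of $\mu\mapsto v_\Omega^-[\mu]$. This step is the genuine obstacle: it requires the sharp scale-invariant kernel estimates above and a careful matching of the parabolic cutoff to each increment, simultaneously for all three seminorms, whereas (i) and (iii) follow the elliptic template with only the space-time anisotropy to account for.

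For the jump formula (iii) I would compute the normal component $\nu_\Omega(x)\cdot\nabla_x v_\Omega[\mu](t,x)$ as $x$ approaches a boundary point along the inner and the outer normal. Using the gradient representation from (ii), this decomposes into the integral $W^*_{\partial\Omega}[\mu]$, whose kernel $\nu_\Omega(x)\cdot\nabla_x S_n(t-\tau,x-y)$ becomes only weakly singular once one uses the $C^{1,\alpha}$ regularity of $\partial\Omega$ to make $\nu_\Omega(x)\cdot(x-y)$ of higher order (so that $W^*_{\partial\Omega}[\mu]$ extends continuously across the boundary), plus a concentrated contribution. The latter is evaluated by the parabolic analogue of the solid-angle computation: as the point tends to the boundary the mass of the normal-derivative kernel concentrates near the diagonal and integrates to exactly half the total, producing the term $\pm\tfrac12\mu(t,x)$ with sign dictated by the side of approach. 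For the full technical details of all three statements I would, as the author does, refer to \cite{LaSoUr68,LaLu17,LaLu19,DaLu23}.
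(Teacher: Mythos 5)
The paper offers no proof of this theorem at all: it presents it as a collection of well-known facts and refers to \cite{DaLu23,LaLu17,LaLu19} (and \cite{LaSoUr68}) for the arguments. Your sketch is a faithful outline of exactly what those references do --- Gaussian bounds on $S_n$ and its parabolic derivatives, differentiation under the integral sign off $\partial\Omega$, the weak surface singularity for continuity across the boundary, the freezing/subtraction technique for the parabolic Schauder estimates, and the splitting into a weakly singular part plus a concentrated half-mass term for the jump relation --- so it is correct and takes essentially the same route the paper relies on.
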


In the following theorem we collect some well known properties for the double layer heat potential. For the proof of the following result we refer to \cite{LaSoUr68,LaLu19}. 

\begin{theorem}\label{thmdl}
    Let $\alpha \in ]0,1[$ and $T>0$. Let $\Omega$ be a bounded open subset of $\mathbb{R}^n$ of class $C^{1,\alpha}$. Then the following statements hold.
    \begin{itemize}
    \item[(i)] Let $\mu \in L^\infty([0,T] \times \partial\Omega)$. Then $w_{\Omega}[\mu] \in C^\infty((0,T) \times (\mathbb{R}^n \setminus \partial\Omega))$ and $w_{\Omega}[\mu]$ solves the heat equation in $]0,T] \times (\mathbb{R}^n \setminus \partial\Omega)$.

    \item[(ii)] Let $\mu \in C_0^{\frac{\alpha}{2};  \alpha}([0,T] \times \partial\Omega)$. The restriction $w_\Omega[\mu]_{|[0,T]\times \Omega}$ has a unique extension to a continuous function $w^+_\Omega[\mu]$ from $[0,T]\times \Omega$ to $\mathbb{R}$ and the restriction $w_\Omega[\mu]_{|[0,T]\times \Omega^-}$ has a unique extension to a continuous function $w^-_\Omega[\mu]$ from $[0,T]\times \Omega^-$ to $\mathbb{R}$.

    \item[(iii)]  Let $\mu \in C_0^{\frac{\alpha}{2};  \alpha}([0,T] \times \partial\Omega)$. Then the following jump formulas hold:
    \begin{align*}
        &w^{\pm}_{\Omega}[\mu](t,x) = \mp \frac{1}{2} \mu(t,x) + W_{\partial\Omega}[\mu](t,x), & \forall (t,x) \in [0,T]\times\partial\Omega,  
        \\
        &\frac{\partial}{\partial\nu_{\Omega}} w^+_\Omega[\mu](t,x) = \frac{\partial}{\partial\nu_{\Omega}} w^-_\Omega[\mu](t,x), & \forall (t,x) \in [0,T]\times\partial\Omega.
    \end{align*}

    \item[(iv)] The map from $C_0^{\frac{1+\alpha}{2}; 1+\alpha}([0,T] \times \partial\Omega)$ to $C_0^{\frac{1+\alpha}{2}; 1+\alpha}([0,T] \times \overline{\Omega})$ that takes $\mu$ to $w^+[\mu]$ is linear and continuous. If $R>0$ is such that $\overline{\Omega} \subseteq B(0,R)$, then the map from $C_0^{\frac{1+\alpha}{2}; 1+\alpha}([0,T] \times \partial\Omega)$ to $C_0^{\frac{1+\alpha}{2}; 1+\alpha}([0,T] \times (\overline{B(0,R)} \setminus \Omega))$ that takes $\mu$ to $w^-[\mu]_{[0,T] \times (\overline{B(0,R)} \setminus \Omega)}$ is linear and continuous.
    \end{itemize}
\end{theorem}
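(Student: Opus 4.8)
The plan is to reduce every assertion to pointwise estimates on the double layer kernel $\frac{\partial}{\partial\nu_\Omega(y)}S_n(t-\tau,x-y)$, exploiting both the parabolic scaling of the heat kernel and the $C^{1,\alpha}$ regularity of $\partial\Omega$. First I would record the explicit form of the kernel: from $\nabla_x S_n(t,x)=-\frac{x}{2t}S_n(t,x)$ one gets
\[
\frac{\partial}{\partial\nu_\Omega(y)}S_n(t-\tau,x-y)=\frac{\nu_\Omega(y)\cdot(x-y)}{2(t-\tau)}\,S_n(t-\tau,x-y),
\]
and the factor $\nu_\Omega(y)\cdot(x-y)$ supplies the decisive cancellation: for $x,y\in\partial\Omega$ of class $C^{1,\alpha}$ one has $|\nu_\Omega(y)\cdot(x-y)|\le C|x-y|^{1+\alpha}$, which lowers the order of the singularity along the diagonal.

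For (i), if $x$ ranges in a compact subset of $\mathbb{R}^n\setminus\partial\Omega$ then $\mathrm{dist}(x,\partial\Omega)\ge d>0$, hence $|x-y|\ge d$ for every $y\in\partial\Omega$; the Gaussian factor then renders the integrand and all its $(t,x)$-derivatives bounded and continuous uniformly in $\tau\in[0,t]$, so differentiation under the integral sign is licit. Since $S_n$ is caloric for positive times, applying $\partial_t-\Delta_x$ under the integral sign returns zero, which yields at once the $C^\infty$-smoothness and the heat equation off the boundary.

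The continuous extensions (ii) and the jump relations (iii) are where I expect the main difficulty to lie, and I would treat them together via the classical additive splitting. Fixing $x_0\in\partial\Omega$ and $x$ near $x_0$, write
\[
\begin{aligned}
w_\Omega[\mu](t,x)={}&\int_0^t\!\!\int_{\partial\Omega}\frac{\partial}{\partial\nu_\Omega(y)}S_n(t-\tau,x-y)\,\big[\mu(\tau,y)-\mu(\tau,x_0)\big]\,d\sigma_y\,d\tau\\
&+\int_0^t\mu(\tau,x_0)\,\Phi(t-\tau,x)\,d\tau,
\end{aligned}
\]
where $\Phi(s,x):=\int_{\partial\Omega}\frac{\partial}{\partial\nu_\Omega(y)}S_n(s,x-y)\,d\sigma_y$. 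The spatial Hölder continuity of $\mu$, together with the cancellation $|\nu_\Omega(y)\cdot(x-y)|\le C|x-y|^{1+\alpha}$ and sharp bounds on the time-integrated Gaussian, shows that the first term extends continuously up to $\partial\Omega$ from either side. The jump is thereby isolated in the second term: a parabolic Gauss-type identity gives $\int_0^t\Phi(t-\tau,x)\,d\tau\to\mp\frac12$ as $x\to x_0$ from $\Omega$ (respectively $\Omega^-$), while the remaining contribution converges to the principal-value operator $W_{\partial\Omega}[\mu]$. This delivers simultaneously the one-sided extensions $w^\pm_\Omega[\mu]$ and the first jump formula $w^\pm_\Omega[\mu]=\mp\frac12\mu+W_{\partial\Omega}[\mu]$. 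The continuity of the normal derivative across $\partial\Omega$ (the second identity in (iii)) is subtler; I would obtain it by rewriting $\nu_\Omega(x)\cdot\nabla_x w_\Omega[\mu]$ through tangential derivatives acting on a single layer kernel and invoking the absence of a jump in that hypersingular object, or equivalently by a duality argument pairing the double layer against the single layer of Theorem \ref{thmsl}.

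Finally, for the Schauder mapping property (iv) I would first bound $\sup|w^\pm_\Omega[\mu]|$ and the time-increment seminorm directly from the estimates above, and then control the spatial gradient $\nabla w^\pm_\Omega[\mu]$ and its Hölder seminorms, which is the heaviest part: one differentiates the kernel once more in $x$, re-localizes by a partition of unity adapted to $\partial\Omega$, and balances the resulting stronger singularity against the $C^{1,\alpha}$-cancellation and the parabolic scaling $S_n(\lambda^2 s,\lambda z)=\lambda^{-n}S_n(s,z)$. The genuine obstacle throughout is uniformity: all constants must depend only on the $C^{1,\alpha}$-character of $\partial\Omega$, on $T$, and on $\alpha$, so that the stated maps are continuous rather than merely bounded. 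This is precisely where the parabolic setting diverges from the elliptic one, since the time integration interacts with the spatial singularity and must be absorbed through estimates of the type $\int_0^t(t-\tau)^{-a}e^{-c|x-y|^2/(t-\tau)}\,d\tau\lesssim|x-y|^{-(2a-2)}$ for $a>1$.
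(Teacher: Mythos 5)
The paper does not prove this theorem at all: it is presented as a collection of known facts with a pointer to Lady\v{z}enskaja--Solonnikov--Ural'ceva \cite{LaSoUr68} and to \cite{LaLu19}, so any argument you give is necessarily ``different from the paper''. Your reconstruction of (i)--(iii) follows the classical route (kernel identity, the $C^{1,\alpha}$ cancellation $|\nu_\Omega(y)\cdot(x-y)|\le C|x-y|^{1+\alpha}$, additive splitting, parabolic Gauss identity, Lyapunov--Tauber for the normal derivative), and the overall structure is sound. Two points in (iii) need more care than you give them. First, the assertion $\int_0^t\Phi(t-\tau,x)\,d\tau\to\mp\tfrac12$ is not literally true: by the divergence theorem one gets $\int_0^t\Phi(t-\tau,x)\,d\tau=\int_\Omega S_n(t,x-y)\,dy-\mathbf{1}_\Omega(x)$, so the one-sided limits are $\int_\Omega S_n(t,x_0-y)\,dy-1$ and $\int_\Omega S_n(t,x_0-y)\,dy$; what is $\mp\tfrac12$ is the \emph{difference} between these limits and the direct boundary value, which is exactly what the decomposition $w^{\pm}=\mp\tfrac12\mu+W_{\partial\Omega}[\mu]$ requires, so the conclusion is right but the intermediate claim as stated is false. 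Second, in your splitting the frozen density $\mu(\tau,x_0)$ sits \emph{inside} the time integral, and the jump of $\int_0^t\mu(\tau,x_0)\Phi(t-\tau,x)\,d\tau$ is $\mp\tfrac12\mu(t,x_0)$, i.e.\ it is carried entirely by the value of the density at the endpoint $\tau=t$ (where the kernel singularity lives); extracting this requires an argument of its own (the naive integration by parts in $\tau$ is unavailable since $\mu$ is only H\"older in time), and your sketch silently conflates it with a pointwise limit of $\int_0^t\Phi$.

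For (iv) your paragraph is a program rather than a proof: the optimal $C_0^{\frac{1+\alpha}{2};1+\alpha}\to C_0^{\frac{1+\alpha}{2};1+\alpha}$ bound for the double layer heat potential on a $C^{1,\alpha}$ domain is precisely the content of \cite{LaLu19} and \cite{DaLu23}, and the genuinely hard step --- controlling the H\"older seminorms of $\nabla w^{\pm}_\Omega[\mu]$, which in those references goes through a representation of the tangential derivatives of $W_{\partial\Omega}$ by auxiliary integral operators --- is exactly the step you defer to ``balancing the stronger singularity against the cancellation''. Identifying where the difficulty lies is not the same as resolving it, so if this theorem were to be proved rather than quoted, part (iv) would still be open in your write-up.
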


Then we recall the following well-known consequences of the
Ascoli-Arzel\'a Theorem on the compactness of the embedding of parabolic Schauder spaces.

\begin{proposition}\label{Ascoli Arzela cons prop}
Let $\alpha \in ]0,1[$ and $T>0$. Let $\Omega$ be a bounded open subset of $\mathbb{R}^n$ of class $C^{1,\alpha}$. Then the following statements hold.
\begin{itemize}
    \item[(i)] The embedding of $C_0^{\frac{1+\alpha}{2}; 1+\alpha}([0,T] \times \partial\Omega)$ into $C_0^{\frac{1+\gamma}{2}; 1+\gamma}([0,T] \times \partial\Omega)$ for any $\gamma \in ]0,\alpha[$;
    
    \item[(ii)] The embedding of $C_0^{\frac{1+\alpha}{2}; 1+\alpha}([0,T] \times \partial\Omega)$ into $C_0^{\frac{\alpha}{2}; \alpha}([0,T] \times \partial\Omega)$ is compact;

    \item[(iii)] The embedding of $C_0^{\frac{\alpha}{2}; \alpha}([0,T] \times \partial\Omega)$ into $C_0^{\frac{\gamma}{2}; \gamma}([0,T] \times \partial\Omega)$ is compact for any $\gamma \in [0,\alpha[$;

    \item[(iv)] The embedding of $C_0^{\frac{\gamma}{2}; \gamma}([0,T] \times \partial\Omega)$ into $C_0^{0}([0,T] \times \partial\Omega)$ is compact for any $\gamma \in ]0,\alpha]$;
\end{itemize}
   
\end{proposition}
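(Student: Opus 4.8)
The plan is to treat all four statements as consequences of the Ascoli--Arzel\`a theorem on the compact set $[0,T]\times\partial\Omega$, combined with the elementary ``splitting'' estimate for H\"older difference quotients. Since $\partial\Omega$ is a compact manifold of class $C^{1,\alpha}$, I would fix a finite atlas and reduce every time/space seminorm to its Euclidean counterpart in the charts, so that gradients and distances along $\partial\Omega$ make classical sense; this localization is understood throughout. I would then prove the four items in order of logical dependence rather than in the order stated: first (iv), then (iii), and finally (ii) and (i), which are the ones involving the spatial gradient.

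For (iv), I would argue directly: if $\{u_k\}$ is bounded in $C_0^{\frac{\gamma}{2};\gamma}([0,T]\times\partial\Omega)$, then it is equibounded, and the time and space H\"older bounds furnish a common modulus of continuity on the compact set $[0,T]\times\partial\Omega$, so the family is equicontinuous; Ascoli--Arzel\`a yields a uniformly convergent subsequence, which is the asserted compactness into $C_0^{0}$. For (iii), starting from a bounded sequence $\{u_k\}$ in $C_0^{\frac{\alpha}{2};\alpha}$ I would first extract (again by Ascoli--Arzel\`a) a subsequence with $u_k\to u$ uniformly, and then upgrade this to convergence in $C_0^{\frac{\gamma}{2};\gamma}$ through the splitting trick. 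Writing $v=u_k-u$, for the spatial quotient I would use, for a threshold $\delta>0$,
\[
\frac{|v(t,x_1)-v(t,x_2)|}{|x_1-x_2|^{\gamma}}
\le
\begin{cases}
[v]_{\mathrm{sp},\alpha}\,|x_1-x_2|^{\alpha-\gamma}\le [v]_{\mathrm{sp},\alpha}\,\delta^{\alpha-\gamma}, & |x_1-x_2|\le\delta,\\[4pt]
2\|v\|_{\infty}\,\delta^{-\gamma}, & |x_1-x_2|>\delta,
\end{cases}
\]
and the analogous inequality for the time quotient with exponents $\frac{\gamma}{2}<\frac{\alpha}{2}$. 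Since $[v]_{\mathrm{sp},\alpha}$ stays bounded while $\|v\|_\infty\to0$, choosing $\delta$ small and then $k$ large makes both seminorms of $u_k-u$ arbitrarily small (the case $\gamma=0$ just reduces to (iv)).

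For (ii) and (i) the extra ingredient is the gradient. For (ii), given $\{u_k\}$ bounded in $C_0^{\frac{1+\alpha}{2};1+\alpha}$, the components of $\{\nabla u_k\}$ are bounded in $C_0^{\frac{\alpha}{2};\alpha}$ (note that $u(0,\cdot)=0$ forces $\nabla u(0,\cdot)=0$), so by item (iv) with $\gamma=\alpha$ I can pass to a subsequence with $u_k\to u$ and $\nabla u_k\to g$ uniformly, and a standard argument identifies $g=\nabla u$. Convergence in $C_0^{\frac{\alpha}{2};\alpha}$ then splits into three pieces: the sup norm of $u_k-u$ tends to $0$; the spatial $\alpha$-seminorm of $u_k-u$ is dominated by $\|\nabla(u_k-u)\|_\infty\,(\mathrm{diam}\,\partial\Omega)^{1-\alpha}\to0$; and the time $\frac{\alpha}{2}$-seminorm is handled by the splitting trick above, now exploiting the bounded $\frac{1+\alpha}{2}$-seminorm against the vanishing sup norm. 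Item (i) is entirely parallel: I would apply (iii) to the gradients to obtain $\nabla u_k\to\nabla u$ in $C_0^{\frac{\gamma}{2};\gamma}$ (this controls the $\sum_i\|\partial_{x_i}(u_k-u)\|_{C^{\frac{\gamma}{2};\gamma}}$ term of the target norm), and treat the time $\frac{1+\gamma}{2}$-seminorm by splitting against the bounded $\frac{1+\alpha}{2}$-seminorm.

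The routine part is the splitting inequality, which is purely elementary. The main obstacle I anticipate is the gradient handling in (i) and (ii): one must justify the extraction of a uniformly (respectively $C_0^{\frac{\gamma}{2};\gamma}$-) convergent subsequence of the gradients and, crucially, identify its limit with $\nabla u$, and then control the spatial H\"older seminorm of the difference through the sup norm of the gradient difference. Making this rigorous on the manifold $\partial\Omega$ requires the chart reduction mentioned at the outset, after which each estimate is the Euclidean one applied componentwise in the local coordinates.
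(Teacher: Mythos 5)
The paper gives no proof of this proposition: it is stated as a well-known consequence of the Ascoli--Arzel\`a Theorem, which is precisely the route you take, and your fleshed-out argument (Ascoli--Arzel\`a for uniform convergence, the interpolation/splitting estimate to upgrade to the lower H\"older exponent, and reduction of the gradient terms to items (iii)--(iv)) is the standard correct proof. The only point to tidy in a full write-up is the bound of the spatial $\alpha$-seminorm by $\|\nabla(u_k-u)\|_\infty$ in item (ii): on the manifold $\partial\Omega$ the mean-value inequality is only local in charts (and $\partial\Omega$ need not be connected), so for far-apart points one should fall back on the sup-norm splitting you already use elsewhere --- a routine fix that does not affect the argument.
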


We now recall some compactness results for the operators $W_{\partial \Omega}$ and $W^*_{\partial \Omega}$. Those results will be crucial in order to prove Theorems \ref{app thm existence ext dir}\,-\,\ref{app thm existence ext neu} below. 

\begin{theorem}\label{thm W and W*}
Let $\alpha \in ]0,1[$, $\gamma \in ]0,\alpha[$ and $T>0$. Let $\Omega$ be a bounded open subset of $\mathbb{R}^n$ of class $C^{1,\alpha}$. Then the following statements hold.
\begin{itemize}
\item[(i)] The operator $W_{\partial\Omega}$ is linear and continuous from $C_0^{\frac{1+\alpha}{2};  1+\alpha}([0,T] \times \partial\Omega)$ into itself;

\item[(ii)] The operator $W_{\partial\Omega}$ is compact from $C_0^{\frac{1+\alpha}{2};  1+\alpha}([0,T] \times \partial\Omega)$ into itself;

\item[(iii)] The operator $W^*_{\partial\Omega}$ is linear and continuous from $C_0^{0}([0,T] \times \partial\Omega)$ to $C_0^{\frac{\gamma}{2}; \gamma}([0,T] \times \partial\Omega)$ and from $C_0^{\frac{\gamma}{2}; \gamma}([0,T] \times \partial\Omega)$ to $C_0^{\frac{\alpha}{2}; \alpha}([0,T] \times \partial\Omega)$;

\item[(iv)] The operator $W^*_{\partial\Omega}$ is compact from $C_0^{0}([0,T] \times \partial\Omega)$ into itself and from $C_0^{\frac{\alpha}{2};  \alpha}([0,T] \times \partial\Omega)$ into itself.
\end{itemize}
\end{theorem}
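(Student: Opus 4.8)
The four assertions split naturally into the two continuity statements (i) and (iii) and the two compactness statements (ii) and (iv). My plan is to establish the continuity (indeed, \emph{gain-of-regularity}) statements first, and then to deduce each compactness statement formally by interposing one of the compact embeddings of Proposition \ref{Ascoli Arzela cons prop} between a continuous map and a compactly embedded factor, using that the composition of a continuous operator with a compact one is compact.

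For the continuity of $W_{\partial\Omega}$ in (i), I would avoid estimating the kernel directly and instead exploit the jump formulas. By Theorem \ref{thmdl}(iii), for $\mu \in C_0^{\frac{1+\alpha}{2};1+\alpha}([0,T]\times\partial\Omega)$ one has
\[
W_{\partial\Omega}[\mu] = w^+_\Omega[\mu]_{|[0,T]\times\partial\Omega} + \tfrac12\,\mu \qquad \text{on } [0,T]\times\partial\Omega,
\]
so that $W_{\partial\Omega}$ is the sum of one half of the identity and of the trace onto $[0,T]\times\partial\Omega$ of the operator $\mu\mapsto w^+_\Omega[\mu]$, which is linear and continuous from $C_0^{\frac{1+\alpha}{2};1+\alpha}([0,T]\times\partial\Omega)$ into $C_0^{\frac{1+\alpha}{2};1+\alpha}([0,T]\times\overline\Omega)$ by Theorem \ref{thmdl}(iv). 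Since the trace operator is linear and continuous between the corresponding parabolic Schauder spaces, (i) follows at once.

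The heart of the matter, and the step I expect to be the main obstacle, is the smoothing estimate underlying both compactness assertions, which is exactly the content of (iii) for $W^*_{\partial\Omega}$ together with its analogue for $W_{\partial\Omega}$: namely, that $W_{\partial\Omega}$ is continuous from the \emph{larger} space $C_0^{\frac{1+\gamma}{2};1+\gamma}([0,T]\times\partial\Omega)$ into $C_0^{\frac{1+\alpha}{2};1+\alpha}([0,T]\times\partial\Omega)$, and that $W^*_{\partial\Omega}$ gains one H\"older step. The gain originates in the geometry. Computing the kernel,
\[
\frac{\partial}{\partial\nu_\Omega(y)}S_n(t-\tau,x-y) = \frac{\nu_\Omega(y)\cdot(x-y)}{2(t-\tau)}\,S_n(t-\tau,x-y),
\]
and analogously for $W^*_{\partial\Omega}$ with $\nu_\Omega(x)\cdot(x-y)$ in the numerator, the $C^{1,\alpha}$ regularity of $\partial\Omega$ yields $|\nu_\Omega(y)\cdot(x-y)|\le C|x-y|^{1+\alpha}$ and $|\nu_\Omega(x)\cdot(x-y)|\le C|x-y|^{1+\alpha}$ for $x,y\in\partial\Omega$, lowering the order of the parabolic singularity by $1+\alpha$. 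The technical work is then to convert this pointwise kernel bound into H\"older estimates in both the parabolic time variable and the space variable: I would control the relevant difference quotients by splitting the domain of integration into a neighbourhood of the singularity and its complement and using the standard Gaussian bounds on the derivatives of $S_n$. This is where all the genuine analysis lies, and I would model these estimates on the corresponding computations in \cite{LaLu19,DaLu23}.

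Granting the smoothing estimates, the compactness statements follow formally, and I would also verify at each step that the vanishing condition at $t=0$ is preserved (which it is, since every operator and embedding involved respects it). For (ii), I factor $W_{\partial\Omega}$ on $C_0^{\frac{1+\alpha}{2};1+\alpha}([0,T]\times\partial\Omega)$ as the compact embedding into $C_0^{\frac{1+\gamma}{2};1+\gamma}([0,T]\times\partial\Omega)$ of Proposition \ref{Ascoli Arzela cons prop}(i), followed by the continuous map $W_{\partial\Omega}\colon C_0^{\frac{1+\gamma}{2};1+\gamma}\to C_0^{\frac{1+\alpha}{2};1+\alpha}$ just described. For (iv), compactness of $W^*_{\partial\Omega}$ on $C_0^{0}$ follows by composing the continuous map $W^*_{\partial\Omega}\colon C_0^{0}\to C_0^{\frac{\gamma}{2};\gamma}$ from (iii) with the compact embedding $C_0^{\frac{\gamma}{2};\gamma}\hookrightarrow C_0^{0}$ of Proposition \ref{Ascoli Arzela cons prop}(iv); and compactness on $C_0^{\frac{\alpha}{2};\alpha}$ follows by composing the compact embedding $C_0^{\frac{\alpha}{2};\alpha}\hookrightarrow C_0^{\frac{\gamma}{2};\gamma}$ of Proposition \ref{Ascoli Arzela cons prop}(iii) with the continuous map $W^*_{\partial\Omega}\colon C_0^{\frac{\gamma}{2};\gamma}\to C_0^{\frac{\alpha}{2};\alpha}$ from (iii).
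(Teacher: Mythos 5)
Your proposal is correct and follows essentially the same route as the paper: the paper likewise obtains (i)--(ii) by combining the regularizing property of $W_{\partial\Omega}$ from \cite[Thm.~4.5~(ii)]{LaLu19} with the compact embedding of Proposition \ref{Ascoli Arzela cons prop}~(i), takes (iii) directly from \cite[Thm.~4.6]{LaLu19}, and deduces (iv) by exactly the two factorizations through Proposition \ref{Ascoli Arzela cons prop}~(iii)--(iv) that you describe. The only (minor) difference is your derivation of (i) from the jump formulas and Theorem \ref{thmdl}~(iv) rather than from the cited gain-of-regularity estimate; in both your proposal and the paper the genuine kernel analysis behind the smoothing is deferred to \cite{LaLu19,DaLu23}.
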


\begin{proof}
    The proof of points (i) and (ii) follows by \cite[Thm. 4.5 (ii)]{LaLu19} and by Proposition \ref{Ascoli Arzela cons prop} (i). The proof of point (iii) follows by \cite[Thm. 4.6]{LaLu19}. Then, statement (iv) follows combining point (iii) and Proposition \ref{Ascoli Arzela cons prop} (iii)-(iv).
\end{proof}

We are now in the position to prove two results on the existence of solutions in parabolic Schauder spaces of the heat equation in unbounded domains. We start with the following theorem for the exterior Dirichlet problem. We mention that, although those are possibly known results, we provide a direct proof by means of Fredholm Altervative. (For the notion of sub-exponential growth at infinity we refer to the Appendix \ref{app 1}.)

\begin{theorem}\label{app thm existence ext dir}
    Let $\alpha\in ]0,1[$ and $T>0$. Let $\Omega$ be a bounded open subset of $\mathbb{R}^n$ of class $C^{1,\alpha}$. Let $g \in C_0^{\frac{1+\alpha}{2}; 1+\alpha}([0,T] \times \partial\Omega)$. Then the problem 
    \begin{equation}\label{prob dirichlet ext}
	\begin{cases}
	\partial_t u - \Delta u = 0 & \quad \text{in } [0,T] \times \overline{\Omega^-}, 
	\\
    u = g & \quad \mbox{on } [0,T] \times \partial \Omega,
    \\
    u(0,\cdot)= 0 & \quad \mbox{in } \overline{\Omega^-},
	\end{cases}
	\end{equation}
    has a unique solution $u \in C^{\frac{1+\alpha}{2}; 1+\alpha}_0([0,T]\times \overline{\Omega^-})$ with sub-exponential growth at infinity (i.e. $u$ satisfies \eqref{app: eq sub expo}) given by
    \begin{equation*}
        u = w^-_{\Omega}[\mu] \quad\text{in } [0,T]\times \overline{\Omega^-},
    \end{equation*}
    where $\mu$ is the unique solution in $C_0^{\frac{1+\alpha}{2}; 1+\alpha}([0,T] \times \partial\Omega)$ of
    \begin{equation}\label{app eq integ ext dir}
        \left(\frac{1}{2} I + W_{\partial \Omega} \right) [\mu] = g \quad\text{in } [0,T] \times \partial\Omega. 
    \end{equation}
\end{theorem}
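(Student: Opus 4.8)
The plan is to prove the two assertions separately: uniqueness within the class of solutions with sub-exponential growth, and existence via a double layer ansatz reduced to the boundary integral equation \eqref{app eq integ ext dir}. Uniqueness is immediate from the uniqueness result for the exterior Dirichlet problem established in the Appendix: if $u_1,u_2$ both solve \eqref{prob dirichlet ext} and satisfy \eqref{app: eq sub expo}, then $u_1-u_2$ solves the homogeneous problem with zero boundary and zero initial datum while still having sub-exponential growth, and hence vanishes. So the substance of the proof lies in existence.

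For existence I would seek the solution in the form $u=w^-_{\Omega}[\mu]$ with a density $\mu\in C_0^{\frac{1+\alpha}{2};1+\alpha}([0,T]\times\partial\Omega)$. By Theorem \ref{thmdl}\,(i),(ii),(iv) this $u$ solves the heat equation in the exterior, extends continuously up to $[0,T]\times\partial\Omega$, has vanishing trace at $t=0$ (the potential is an integral over $[0,t]$, and $\mu$ lies in the subscript-$0$ space), and is of the correct Schauder class on bounded portions of $\overline{\Omega^-}$. The exterior jump relation of Theorem \ref{thmdl}\,(iii) turns the Dirichlet condition $u=g$ on $\partial\Omega$ into
\[
\Bigl(\tfrac{1}{2}I+W_{\partial\Omega}\Bigr)[\mu]=g,
\]
which is exactly \eqref{app eq integ ext dir}. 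By Theorem \ref{thm W and W*}\,(i)--(ii) the operator $W_{\partial\Omega}$ is linear, continuous and compact from $C_0^{\frac{1+\alpha}{2};1+\alpha}([0,T]\times\partial\Omega)$ into itself, so $\tfrac12 I+W_{\partial\Omega}$ is a Fredholm operator of index zero and, by the Fredholm Alternative, is invertible as soon as it is injective.

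The hard part will be the injectivity of $\tfrac12 I+W_{\partial\Omega}$, and I expect this to be the main obstacle because it is where the exterior and interior uniqueness results must be combined with the jump formulas and where the sign conventions must be tracked carefully. Suppose $\bigl(\tfrac12 I+W_{\partial\Omega}\bigr)[\mu]=0$. Then $w^-_{\Omega}[\mu]$ has zero boundary trace, so it solves the exterior Dirichlet problem with zero data; since the Gaussian decay of $S_n$ makes $w^-_{\Omega}[\mu]$ tend to $0$ at infinity (hence it satisfies \eqref{app: eq sub expo}), the exterior uniqueness result forces $w^-_{\Omega}[\mu]=0$ on $[0,T]\times\overline{\Omega^-}$, and in particular $\tfrac{\partial}{\partial\nu_\Omega}w^-_{\Omega}[\mu]=0$ on $\partial\Omega$. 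The second jump relation in Theorem \ref{thmdl}\,(iii) equates the normal derivatives of $w^+_\Omega[\mu]$ and $w^-_\Omega[\mu]$, so $\tfrac{\partial}{\partial\nu_\Omega}w^+_{\Omega}[\mu]=0$; thus $w^+_{\Omega}[\mu]$ solves the interior Neumann problem with zero data and zero initial datum, which by a standard energy argument (equivalently, interior Neumann uniqueness) is identically zero. Subtracting the two value-jump relations of Theorem \ref{thmdl}\,(iii) gives $w^+_{\Omega}[\mu]-w^-_{\Omega}[\mu]=-\mu$, whence $\mu=0$, proving injectivity.

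Once \eqref{app eq integ ext dir} is uniquely solvable for $\mu\in C_0^{\frac{1+\alpha}{2};1+\alpha}([0,T]\times\partial\Omega)$, I would set $u=w^-_{\Omega}[\mu]$ and check that it is a bona fide solution in the required global class. The local Schauder regularity is provided by Theorem \ref{thmdl}\,(iv) on sets $\overline{B(0,R)}\setminus\Omega$; to upgrade this to global membership in $C_0^{\frac{1+\alpha}{2};1+\alpha}([0,T]\times\overline{\Omega^-})$ with sub-exponential growth, I would combine that local estimate with decay bounds at infinity obtained directly from the Gaussian estimates on $S_n$ and its spatial derivatives, which show that $u$ and $\nabla u$ are bounded and in fact vanish as $|x|\to\infty$ uniformly in $t\in[0,T]$. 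This yields the solution $u=w^-_{\Omega}[\mu]$ of \eqref{prob dirichlet ext} with the stated properties and completes the argument.
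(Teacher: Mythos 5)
Your proposal is correct and coincides with the paper's own argument in every essential step: uniqueness from the exterior Dirichlet uniqueness result of the Appendix, existence via the double layer ansatz reduced to $\bigl(\tfrac12 I+W_{\partial\Omega}\bigr)[\mu]=g$, Fredholm alternative using the compactness of $W_{\partial\Omega}$, and injectivity by combining exterior Dirichlet uniqueness, the equality of the interior and exterior normal derivatives of the double layer potential, interior Neumann uniqueness, and the value-jump relations to conclude $\mu=0$. The sign bookkeeping in your jump relations is consistent with Theorem \ref{thmdl}\,(iii), so no changes are needed.
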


\begin{proof}
    First we notice that uniqueness for problem \eqref{prob dirichlet ext} for solutions in $C^{\frac{1+\alpha}{2}; 1+\alpha}_0([0,T]\times \overline{\Omega^-})$ with sub-exponential growth at infinity follows by Corollary \ref{app cor ext dir}. 
    
    We now prove the existence part of the statement. By known properties for the double layer heat potential (see Theorem \ref{thmdl}), $w^-_{\Omega}[\mu]$ solves problem \eqref{prob dirichlet ext} if and only if $\mu \in C_0^{\frac{1+\alpha}{2}; 1+\alpha}([0,T] \times \partial\Omega)$ solves the integral equation \eqref{app eq integ ext dir}.
    By Theorem \ref{thm W and W*} (ii) we know that $W_{\partial \Omega}$ is compact from $C_0^{\frac{1+\alpha}{2}; 1+\alpha}([0,T] \times \partial\Omega)$ into itself. Hence the operator $\left(\frac{1}{2} I + W_{\partial \Omega} \right)$ is a compact perturbation of an isomorphisms and therefore it is a Fredholm operator of index $0$. Accordingly, the Fredholm Alternative implies that in order to prove the unique solvability of the equation \eqref{app eq integ ext dir} in $C_0^{\frac{1+\alpha}{2}; 1+\alpha}([0,T] \times \partial\Omega)$ it suffices to show that the equation
    \begin{equation}\label{app eq integ ext dir homo}
        \left(\frac{1}{2} I + W_{\partial \Omega} \right) [\mu] = 0 \quad\text{on } [0,T] \times \partial\Omega, 
    \end{equation}
    admits $\mu = 0$ as a unique solution in $C_0^{\frac{1+\alpha}{2}; 1+\alpha}([0,T] \times \partial\Omega)$. Hence, let $\mu \in C_0^{\frac{1+\alpha}{2}; 1+\alpha}([0,T] \times \partial\Omega)$ be a solution of \eqref{app eq integ ext dir homo}. Theorem \ref{thmdl} implies that $w^-_{\Omega}[\mu]$ solves
    \begin{equation*}
	\begin{cases}
	\partial_t u - \Delta u = 0 & \quad \text{in } [0,T] \times \overline{\Omega^-}, 
	\\
    u = 0 & \quad \mbox{on } [0,T] \times \partial \Omega,
    \\
    u(0,\cdot)= 0 & \quad \mbox{in } \overline{\Omega^-}.
	\end{cases}
	\end{equation*}
    Since the double layer heat potential has sub-exponential growth for $|x|\to \infty$ (see \eqref{phi2eq}), then Corollary \ref{app cor ext dir} implies that $w^-_{\Omega}[\mu]=0$ in $[0,T] \times \overline{\Omega^-}$. In particular, $\partial_{\nu_{\Omega}} w^-_{\Omega}[\mu]= 0$. By jumps formulas of Theorem \ref{thmdl} (iii), we know that $\partial_{\nu_{\Omega}} w^-_{\Omega}[\mu] = \partial_{\nu_{\Omega}} w^+_{\Omega}[\mu]$, hence we obtain that $w^+_{\Omega}[\mu]$ solves the interior Neumann problem 
    \begin{equation*}
	\begin{cases}
	\partial_t u - \Delta u = 0 & \quad \text{in } [0,T] \times \overline{\Omega}, 
	\\
    \frac{\partial}{\partial \nu_\Omega} u = 0 & \quad \mbox{on } [0,T] \times \partial \Omega,
    \\
    u(0,\cdot)= 0 & \quad \mbox{in } \overline{\Omega},
	\end{cases}
	\end{equation*}
    and thus, by Theorem \ref{app thm uni int dir-neu}, we conclude that $w^+_{\Omega}[\mu]=0$ in $[0,T] \times \overline{\Omega}$. Finally, by jump formulas of Theorem \ref{thmdl} (iii), we get
    \begin{equation*}
        \mu = \left(\frac{1}{2} I + W_{\partial \Omega} \right) [\mu] - \left(-\frac{1}{2} I + W_{\partial \Omega} \right) [\mu] = w^-_{\Omega}[\mu]_{|[0,T] \times \partial\Omega} - w^+_{\Omega}[\mu]_{|[0,T] \times \partial\Omega}  = 0 \quad \text{on } [0,T] \times \partial\Omega.
    \end{equation*}
\end{proof}

In a similar way we can prove the following result for the exterior Neumann problem. (Again, for the notion of sub-exponential growth at infinity we refer to Appendix \ref{app 1}.)

\begin{theorem}\label{app thm existence ext neu}
    Let $\alpha\in ]0,1[$ and $T>0$. Let $\Omega$ be a bounded open subset of $\mathbb{R}^n$ of class $C^{1,\alpha}$. Let $g \in C_0^{\frac{\alpha}{2};  \alpha}([0,T] \times \partial\Omega)$. Then the following exterior Dirichlet problem 
    \begin{equation}\label{prob neumann ext}
	\begin{cases}
	\partial_t u - \Delta u = 0 & \quad \text{in } [0,T] \times \overline{\Omega^-}, 
	\\
    \frac{\partial}{\partial \nu_\Omega} u = g & \quad \mbox{on } [0,T] \times \partial \Omega,
    \\
    u(0,\cdot)= 0 & \quad \mbox{in } \overline{\Omega^-},
	\end{cases}
	\end{equation}
    has a unique solution $u \in C^{\frac{1+\alpha}{2}; 1+\alpha}_0([0,T]\times \overline{\Omega^-})$ with sub-exponential growth at infinity (i.e. $u$ satisfies \eqref{app: eq sub expo}) given by 
    \begin{equation*}
        u = v^-_{\Omega}[\mu] \quad\text{in } [0,T]\times \overline{\Omega^-},
    \end{equation*}
    where $\mu$ is the unique solution in $C_0^{\frac{\alpha}{2};  \alpha}([0,T] \times \partial\Omega)$ of 
    \begin{equation}\label{app eq integ ext neu}
        \left(-\frac{1}{2} I + W^\ast_{\partial \Omega} \right) [\mu] = g \quad\text{in } [0,T] \times \partial\Omega. 
    \end{equation}
\end{theorem}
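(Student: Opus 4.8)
The plan is to follow the same scheme as in the proof of Theorem \ref{app thm existence ext dir}, exchanging the roles of the double and single layer potentials. First I would dispose of uniqueness: any solution in $C^{\frac{1+\alpha}{2}; 1+\alpha}_0([0,T]\times\overline{\Omega^-})$ with sub-exponential growth is uniquely determined by the uniqueness result for the exterior Neumann problem recorded in the Appendix (the Neumann counterpart of Corollary \ref{app cor ext dir}). For existence, Theorem \ref{thmsl}\,(i)--(ii) guarantees that for $\mu \in C_0^{\frac{\alpha}{2};\alpha}([0,T]\times\partial\Omega)$ the function $v^-_\Omega[\mu]$ is caloric in the exterior, belongs to the correct parabolic Schauder space on bounded sets, vanishes at $t=0$, and has sub-exponential growth as $|x|\to\infty$. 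The normal-derivative jump formula of Theorem \ref{thmsl}\,(iii) then shows that $v^-_\Omega[\mu]$ solves \eqref{prob neumann ext} if and only if $\mu$ solves the boundary integral equation \eqref{app eq integ ext neu}, since $\frac{\partial}{\partial\nu_\Omega} v^-_\Omega[\mu] = -\frac{1}{2}\mu + W^*_{\partial\Omega}[\mu]$ on $[0,T]\times\partial\Omega$.

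It remains to solve \eqref{app eq integ ext neu} uniquely in $C_0^{\frac{\alpha}{2};\alpha}([0,T]\times\partial\Omega)$. By Theorem \ref{thm W and W*}\,(iv) the operator $W^*_{\partial\Omega}$ is compact on this space, so $-\frac{1}{2}I + W^*_{\partial\Omega}$ is a compact perturbation of the isomorphism $-\frac{1}{2}I$ and hence a Fredholm operator of index $0$. By the Fredholm Alternative it suffices to prove injectivity, i.e. that the only solution of $\left(-\frac{1}{2}I + W^*_{\partial\Omega}\right)[\mu]=0$ is $\mu=0$. Given such a $\mu$, the function $v^-_\Omega[\mu]$ solves the exterior Neumann problem \eqref{prob neumann ext} with $g=0$ and has sub-exponential growth, so by exterior Neumann uniqueness $v^-_\Omega[\mu]=0$ in $[0,T]\times\overline{\Omega^-}$; in particular its trace on $[0,T]\times\partial\Omega$ vanishes. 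Since $v_\Omega[\mu]$ is continuous across $\partial\Omega$ (Theorem \ref{thmsl}\,(i)), the interior trace $v^+_\Omega[\mu]_{|[0,T]\times\partial\Omega}=V_{\partial\Omega}[\mu]$ also vanishes, so $v^+_\Omega[\mu]$ solves the interior Dirichlet problem with zero data. Theorem \ref{app thm uni int dir-neu} then forces $v^+_\Omega[\mu]=0$ in $[0,T]\times\overline{\Omega}$, whence $\frac{\partial}{\partial\nu_\Omega} v^+_\Omega[\mu]=0$. Combining the two jump formulas of Theorem \ref{thmsl}\,(iii) yields
\[
\mu = \frac{\partial}{\partial\nu_\Omega} v^+_\Omega[\mu] - \frac{\partial}{\partial\nu_\Omega} v^-_\Omega[\mu] = 0 \quad\text{on } [0,T]\times\partial\Omega,
\]
which is the desired injectivity.

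The main obstacle I anticipate is precisely this injectivity step: unlike the Fredholm and jump-formula bookkeeping, it is not formal, because it couples two distinct uniqueness statements. One must pass from the vanishing of $v^-_\Omega[\mu]$ in the exterior to the vanishing of $v^+_\Omega[\mu]$ in the interior, and this transfer hinges on the global continuity of the single layer potential together with interior Dirichlet uniqueness; the role played here by the interior Dirichlet problem is the mirror image of the role played by the interior Neumann problem in Theorem \ref{app thm existence ext dir}. A minor technical point to check is that all the objects live in the correct spaces, namely that $g\in C_0^{\frac{\alpha}{2};\alpha}$ is matched by $\mu\in C_0^{\frac{\alpha}{2};\alpha}$ via Theorem \ref{thmsl}\,(ii) and Theorem \ref{thm W and W*}\,(iv), so that the Fredholm theory is applied in a single self-consistent Schauder space.
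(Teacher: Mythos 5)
Your proposal is correct and follows essentially the same route as the paper's own proof: Fredholm alternative for $-\frac{1}{2}I+W^*_{\partial\Omega}$ on $C_0^{\frac{\alpha}{2};\alpha}([0,T]\times\partial\Omega)$, with injectivity obtained by combining exterior Neumann uniqueness (Theorem \ref{app thm ext neu}), the continuity of the single layer potential across $\partial\Omega$, interior Dirichlet uniqueness (Theorem \ref{app thm uni int dir-neu}), and the jump formulas of Theorem \ref{thmsl}\,(iii). No substantive differences to report.
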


\begin{proof}
    First we notice that uniqueness for problem \eqref{prob neumann ext} for solutions in $C^{\frac{1+\alpha}{2}; 1+\alpha}_0([0,T]\times \overline{\Omega^-})$ with sub-exponential growth at infinity follows by Theorem \ref{app thm ext neu}.
    
    We now prove the existence part of the statement. By known properties for single layer heat potential (see Theorem \ref{thmsl} (i)--(iii)), $v^-_{\Omega}[\mu]$ solves problem \eqref{prob neumann ext} if and only if $\mu \in C_0^{\frac{\alpha}{2};  \alpha}([0,T] \times \partial\Omega)$ solves the integral equation \eqref{app eq integ ext neu}.
    By Theorem \ref{thm W and W*} (iv), we know that $W^\ast_{\partial \Omega}$ is compact from $C_0^{\frac{\alpha}{2};  \alpha}([0,T] \times \partial\Omega)$ into itself. Hence the operator $\left(-\frac{1}{2} I + W^\ast_{\partial \Omega} \right)$ is a compact perturbation of an isomorphism and therefore it is a Fredholm operator of index $0$. Accordingly, the Fredholm Alternative implies that in order to prove the unique solvability of the equation \eqref{app eq integ ext neu} in $C_0^{\frac{\alpha}{2};  \alpha}([0,T] \times \partial\Omega)$ it suffices to show that the equation
    \begin{equation}\label{app eq integ ext neu homo}
        \left(-\frac{1}{2} I + W^\ast_{\partial \Omega} \right) [\mu] = 0 \quad\text{in } [0,T] \times \partial\Omega 
    \end{equation}
    admits $\mu = 0$ as a unique solution in $C_0^{\frac{\alpha}{2};  \alpha}([0,T] \times \partial\Omega)$. Hence, let $\mu \in C_0^{\frac{\alpha}{2};  \alpha}([0,T] \times \partial\Omega)$ be a solution of \eqref{app eq integ ext neu homo}. Theorem \ref{thmsl} implies that $v^-_{\Omega}[\mu]$ solves
    \begin{equation*}
	\begin{cases}
	\partial_t u - \Delta u = 0 & \quad \text{in } [0,T] \times \overline{\Omega^-}, 
	\\
    \frac{\partial}{\partial \nu_\Omega} u = 0 & \quad \mbox{on } [0,T] \times \partial \Omega,
    \\
    u(0,\cdot)= 0 & \quad \mbox{in } \overline{\Omega^-}.
	\end{cases}
	\end{equation*}
    Since the single layer heat potential has sub-exponential growth for $|x|\to \infty$ (see \eqref{phi1eq}), then Theorem \ref{app thm ext neu} implies that $v^-_{\Omega}[\mu]=0$. Moreover, by Theorem \ref{thmsl}, $v^+_{\Omega}[\mu]$ solves the interior Dirichlet problem
    \begin{equation*}
	\begin{cases}
	\partial_t u - \Delta u = 0 & \quad \text{in } [0,T] \times \overline{\Omega}, 
	\\
    u = 0 & \quad \mbox{on } [0,T] \times \partial \Omega,
    \\
    u(0,\cdot)= 0 & \quad \mbox{in } \overline{\Omega},
	\end{cases}
    \end{equation*}
    and thus, by Theorem \ref{app thm uni int dir-neu}, we conclude that $v^+_{\Omega}[\mu]=0$ in $[0,T] \times \overline{\Omega}$. Finally, by jump formulas of Theorem \ref{thmsl} (iii), we have that
    \begin{equation*}
        \mu = \frac{\partial}{\partial \nu_\Omega}v_{\Omega}^+[\mu] - \frac{\partial}{\partial \nu_\Omega}v_{\Omega}^-[\mu] = 0 \qquad \text{on } [0,T] \times \partial\Omega.
    \end{equation*}
\end{proof}

We now state the main result of this section, Theorem \ref{thm V iso} below, which will provide the existence and uniqueness of the densities of the representation Lemma \ref{lemma rappr} in Section \ref{sec existence result}. The result is inspired by \cite[Thm. 2]{LuMu18}. We mention that another possible way to prove Theorem \ref{thm V iso} below is based on the arguments in Brown \cite[Prop. 6.2 \& Thm 4.18]{Br89} and some regularity results in Schauder spaces presented in \cite{LaLu17,LaLu19}. However, our proof is self-contained and based on Theorems \ref{app thm existence ext dir}\,-\,\ref{app thm existence ext neu} and the results in the Appendix \ref{app 1}, hence for completeness we describe our alternative argument. 

\begin{theorem}\label{thm V iso}
    Let $\alpha \in ]0,1[$ and $T>0$. Let $\Omega$ be a bounded open subset of $\mathbb{R}^n$ of class $C^{1,\alpha}$. Then, the operator $V_{\partial \Omega}$ is an isomorphism from $C_0^{\frac{\alpha}{2}; \alpha}([0,T] \times \partial\Omega)$ to $C_0^{\frac{1+\alpha}{2}; 1+\alpha}([0,T] \times \partial\Omega)$.
\end{theorem}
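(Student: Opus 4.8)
The plan is to show that $V_{\partial\Omega}$ is a continuous linear bijection between the Banach spaces $C_0^{\frac{\alpha}{2};\alpha}([0,T]\times\partial\Omega)$ and $C_0^{\frac{1+\alpha}{2};1+\alpha}([0,T]\times\partial\Omega)$, and then to invoke the open mapping theorem to deduce continuity of the inverse. Continuity of $V_{\partial\Omega}$ itself is immediate: by Theorem \ref{thmsl} (ii) the map $\mu\mapsto v_\Omega^+[\mu]$ is continuous from $C_0^{\frac{\alpha}{2};\alpha}([0,T]\times\partial\Omega)$ into $C_0^{\frac{1+\alpha}{2};1+\alpha}([0,T]\times\overline\Omega)$, and composing with the continuous restriction to $[0,T]\times\partial\Omega$ gives continuity of $V_{\partial\Omega}$ into the target space. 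It then remains to prove bijectivity.

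For injectivity I would argue as follows. Suppose $V_{\partial\Omega}[\mu]=0$. Then $v^+_\Omega[\mu]$ solves the interior Dirichlet problem with zero boundary and initial data, so $v^+_\Omega[\mu]=0$ by interior uniqueness (Theorem \ref{app thm uni int dir-neu}); likewise $v^-_\Omega[\mu]$ solves the exterior Dirichlet problem with zero data and, having sub-exponential growth (cf. \eqref{phi1eq}), vanishes by Corollary \ref{app cor ext dir}. Both one-sided normal derivatives therefore vanish, and the jump formula of Theorem \ref{thmsl} (iii) yields $\mu=\partial_{\nu_\Omega}v^+_\Omega[\mu]-\partial_{\nu_\Omega}v^-_\Omega[\mu]=0$, which is injectivity.

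Surjectivity is the heart of the matter. Given $g\in C_0^{\frac{1+\alpha}{2};1+\alpha}([0,T]\times\partial\Omega)$, I would solve the exterior Dirichlet problem with datum $g$ (Theorem \ref{app thm existence ext dir}), obtaining a solution $u^-$ with sub-exponential growth, and the interior Dirichlet problem with the same datum $g$, obtaining $u^+$. The latter existence is classical; alternatively it follows by the same Fredholm scheme as Theorem \ref{app thm existence ext dir}, representing $u^+=w^+_\Omega[\eta]$ and inverting $-\frac12 I+W_{\partial\Omega}$ (compact perturbation of an isomorphism by Theorem \ref{thm W and W*} (ii)), whose injectivity rests on interior Dirichlet uniqueness together with exterior Neumann uniqueness (Theorem \ref{app thm ext neu}), exactly as in the proof of Theorem \ref{app thm existence ext neu}. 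Since $u^\pm\in C_0^{\frac{1+\alpha}{2};1+\alpha}$, their conormal derivatives lie in $C_0^{\frac{\alpha}{2};\alpha}$, so I may set $\mu:=\partial_{\nu_\Omega}u^+-\partial_{\nu_\Omega}u^-\in C_0^{\frac{\alpha}{2};\alpha}([0,T]\times\partial\Omega)$ and claim that $V_{\partial\Omega}[\mu]=g$.

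To verify the claim I would compare $v_\Omega[\mu]$ with the glued function $U$ equal to $u^+$ on $[0,T]\times\overline\Omega$ and to $u^-$ on $[0,T]\times\overline{\Omega^-}$, noting that $U$ is continuous across $\partial\Omega$ because $u^+=u^-=g$ there. By construction $U$ and $v_\Omega[\mu]$ share the normal-derivative jump $\mu$ across $\partial\Omega$ (Theorem \ref{thmsl} (iii)) and both are continuous across it, so their difference $h:=v_\Omega[\mu]-U$ has matching value and conormal trace from the two sides. Consequently $h$ extends to a solution of the heat equation on all of $[0,T]\times\mathbb{R}^n$ with zero initial datum and sub-exponential growth, and a whole-space (Tychonoff-type) uniqueness argument based on Appendix \ref{app 1} forces $h\equiv0$; hence $v_\Omega[\mu]=U$ and, restricting to the boundary, $V_{\partial\Omega}[\mu]=U|_{[0,T]\times\partial\Omega}=g$. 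The main obstacle is precisely this gluing step: one must justify that agreement of value and conormal trace across $\partial\Omega$ upgrades $h$ to a genuine global caloric function (a removable-interface/transmission argument) and then apply the uniqueness theorem valid under the sub-exponential growth condition \eqref{app: eq sub expo}. Once bijectivity is in hand, the open mapping theorem guarantees that $V_{\partial\Omega}^{(-1)}$ is continuous, completing the proof.
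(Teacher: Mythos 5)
Your continuity and injectivity arguments coincide with the paper's own proof. The surjectivity argument, however, takes a genuinely different route, and it is there that a gap remains. The paper never glues an interior and an exterior solution: given $\xi\in C_0^{\frac{1+\alpha}{2};1+\alpha}([0,T]\times\partial\Omega)$ it solves only the exterior Dirichlet problem (Theorem \ref{app thm existence ext dir}), obtaining $u^-_\xi$ with sub-exponential growth, forms the single Neumann datum $\frac{\partial}{\partial\nu_\Omega}u^-_\xi\in C_0^{\frac{\alpha}{2};\alpha}([0,T]\times\partial\Omega)$, and applies Theorem \ref{app thm existence ext neu} to produce $\mu$ such that $v^-_\Omega[\mu]$ solves the exterior Neumann problem with that datum; exterior Neumann uniqueness (Theorem \ref{app thm ext neu}) then gives $v^-_\Omega[\mu]=u^-_\xi$ outright, whence $V_{\partial\Omega}[\mu]=\xi$. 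No transmission argument is needed, and every ingredient is already established in the paper.

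Your construction of $\mu:=\partial_{\nu_\Omega}u^+-\partial_{\nu_\Omega}u^-$ is salvageable but, as you yourself flag, leaves the decisive step unproved: you must show that a function which is caloric on either side of $\partial\Omega$, continuous across it, and with matching conormal derivatives from the two sides is caloric on all of $[0,T]\times\mathbb{R}^n$ (a removable-interface lemma), and you must then invoke a whole-space Tychonoff-type uniqueness theorem under the growth condition \eqref{app: eq sub expo}. Neither statement is available in the paper: Corollary \ref{app cor ext dir} and the maximum principle of the Appendix are formulated for exterior domains with a boundary, and no removable-interface result is recorded. (Both can be supplied --- the interface lemma follows from the Third Green Identity of Lemma \ref{thgr} applied on the two sides of $\partial\Omega$, the interface contributions cancelling when values and conormal derivatives match, and whole-space uniqueness under \eqref{app: eq sub expo} is classical --- but this is genuine additional work, on top of an existence theorem for the \emph{interior} Dirichlet problem in $C_0^{\frac{1+\alpha}{2};1+\alpha}$, of which the paper only states the uniqueness part, Theorem \ref{app thm uni int dir-neu}.) The fix is simply to match Neumann data as the paper does rather than glue across the interface.
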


\begin{proof}
    By Theorem \ref{thmsl} (ii) and by the continuity of the trace operator, we know that $V_{\partial\Omega}$ is a linear and continuous operator from $C_0^{\frac{\alpha}{2}; \alpha}([0,T] \times \partial\Omega)$ to $C_0^{\frac{1+\alpha}{2}; 1+\alpha}([0,T] \times \partial\Omega)$. Accordingly, by the Open Mapping Theorem, it suffices to prove that $V_{\partial \Omega}$ is a bijection. We first prove injectivity. Hence, let $\mu \in C_0^{\frac{\alpha}{2}; \alpha}([0,T] \times \partial\Omega)$ be such that $V_{\partial\Omega}[\mu] = 0$. By the continuity of the single layer heat potential, we know that $v^+_{\Omega}[\mu]_{|[0,T] \times \partial\Omega}= v^-_{\Omega}[\mu]_{|[0,T] \times \partial\Omega}=0$. Thus, the function $v^+_{\Omega}[\mu]$ solves the Dirichlet problem for the heat equation in $[0,T] \times \Omega$ with zero initial condition and with zero Dirichlet boundary condition. The uniqueness of the solution for the classical Dirichlet problem implies that $v^+_{\Omega}[\mu] = 0$ on $[0,T] \times \overline{\Omega}$ (cf. Theorem \ref{app thm uni int dir-neu}).
    
    Furthermore, since $v^-_{\Omega}[\mu]_{|[0,T] \times\partial\Omega}=0$, then the function $v^-_{\Omega}[\mu]$ solves the Dirichlet problem
    \begin{equation*}
	\begin{cases}
	\partial_t u - \Delta u = 0 & \quad \text{in } [0,T] \times \overline{\Omega^-}, 
	\\
    u = 0 & \quad \mbox{on } [0,T] \times \partial \Omega,
    \\
    u(0,\cdot)= 0 & \quad \mbox{in } \overline{\Omega^-}.
	\end{cases}
    \end{equation*}
    Moreover, we notice that $v^-_{\Omega}[\mu]$ satisfies the following inequality:
    \begin{equation*}
    |v^-_{\Omega}[\mu](t,x)| \leq C \,e^{c|x|^2} \qquad \forall (t,x) \in ]0,T[ \times \Omega^-, 
    \end{equation*}
    for two suitable constants $C,c>0$ (see estimate \eqref{phi1eq} in the Appendix \ref{app 1} for details). Hence by Corollary \ref{app cor ext dir}, we conclude that $v^-_{\Omega}[\mu] = 0$ in $[0,T] \times \overline{\Omega^-}$. Finally, the jump formulas in Theorem \ref{thmsl} (iii) imply that
    \begin{equation*}
    \mu = \frac{\partial}{\partial \nu_\Omega}v_{\Omega}^+[\mu] - \frac{\partial}{\partial \nu_\Omega}v_{\Omega}^-[\mu] = 0 \qquad \text{on } [0,T] \times \partial\Omega. 
    \end{equation*}

    Next, we prove surjectivity. Let $\xi \in C_0^{\frac{1+\alpha}{2}; 1+\alpha}([0,T] \times \partial\Omega)$. By Theorem \ref{app thm existence ext dir} on the existence for the Dirichlet problem for the heat equation in $[0,T] \times \Omega^-$, there exists a unique function $u^-_{\xi} \in C_{0}^{\frac{1+\alpha}{2}; 1+\alpha}([0,T] \times \overline{\Omega^-})$ which solves 
    \begin{equation*}
	\begin{cases}
	\partial_t u - \Delta u = 0 & \quad \text{in } [0,T] \times \overline{\Omega^-}, 
	\\
    u = \xi & \quad \mbox{on } [0,T] \times \partial \Omega,
    \\
    u(0,\cdot)= 0 & \quad \mbox{in } \overline{\Omega^-},
	\end{cases}
    \end{equation*}
    and satisfies the following inequality:
    \begin{equation*}
    |u^-_{\xi}(t,x)| \leq C \,e^{c|x|^2} \qquad \forall (t,x) \in ]0,T[ \times \Omega^-, 
    \end{equation*}
    for two suitable constants $C,c>0$. Then, since $\frac{\partial}{\partial \nu_\Omega} u^-_{\xi} \in C_{0}^{\frac{\alpha}{2}; \alpha}([0,T] \times \partial\Omega)$, by Theorem \ref{app thm existence ext neu} there exists a unique $\mu \in C_{0}^{\frac{\alpha}{2}; \alpha}([0,T] \times \partial\Omega)$ such that $v^-_{\Omega}[\mu]$ solves the problem
    \begin{equation*}
	\begin{cases}
	\partial_t u - \Delta u = 0 & \quad \text{in } [0,T] \times \overline{\Omega^-}, 
	\\
    \frac{\partial}{\partial \nu_{\Omega}} u =  \frac{\partial}{\partial \nu_\Omega} u^-_{\xi} & \quad \mbox{on } [0,T] \times \partial \Omega,
    \\
    u(0,\cdot)= 0 & \quad \mbox{in } \overline{\Omega^-}.
	\end{cases}
    \end{equation*}
    By the uniqueness of the solution for the Neumann problem for the heat equation (see Theorem \ref{app thm ext neu}), we have that $v^-_{\Omega}[\mu] = u^-_{\xi}$. In particular,
    \begin{equation*}
    V_{\partial\Omega}[\mu] = v^-_{\Omega}[\mu]_{|[0,T]\times \partial\Omega} = \xi \qquad \text{on } [0,T] \times \partial \Omega,
    \end{equation*}
    and accordingly the statement follows.
\end{proof}

In the following Theorem \ref{thm V}, we recall some mapping properties and compactness results for the operator $V_{\partial\Omega}$.

\begin{theorem}\label{thm V}
Let $\alpha \in ]0,1[$ and $T>0$. Let $\Omega$ be a bounded open subset of $\mathbb{R}^n$ of class $C^{1,\alpha}$. Then the following statements hold.
\begin{itemize}
\item[(i)] The operator $V_{\partial\Omega}$ is linear and continuous from $C_0^{0}([0,T] \times \partial\Omega)$ to $C_0^{\frac{\alpha}{2};  \alpha}([0,T] \times \partial\Omega)$;

\item[(ii)] The operator $V_{\partial\Omega}$ is compact from $C_0^{0}([0,T] \times \partial\Omega)$ into itself and from $C_0^{\frac{\alpha}{2};  \alpha}([0,T] \times \partial\Omega)$ into itself.
\end{itemize}
\end{theorem}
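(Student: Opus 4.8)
The plan is to prove the continuity statement (i) first by direct parabolic kernel estimates (or, since the theorem is phrased as a recollection, simply by invoking the mapping properties of the single layer heat potential already available in \cite{LaLu17,LaLu19,DaLu23}), and then to deduce the two compactness claims in (ii) by factoring $V_{\partial\Omega}$ through a more regular space and composing with the compact embeddings of Proposition \ref{Ascoli Arzela cons prop}.

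For statement (i), I would work directly with the kernel representation $V_{\partial\Omega}[\mu](t,x) = \int_0^t\int_{\partial\Omega} S_n(t-\tau,x-y)\mu(\tau,y)\,d\sigma_y\,d\tau$ for $(t,x)\in[0,T]\times\partial\Omega$, and estimate the three pieces of the $C^{\frac{\alpha}{2};\alpha}$ norm in terms of $\|\mu\|_{C^0}$. The sup bound follows from the surface estimate $\int_{\partial\Omega}(t-\tau)^{-n/2}e^{-|x-y|^{2}/(4(t-\tau))}\,d\sigma_y \le C(t-\tau)^{-1/2}$, obtained by integrating the Gaussian over the $(n-1)$-dimensional boundary, together with $\int_0^t(t-\tau)^{-1/2}\,d\tau \le 2\sqrt{T}$. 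The spatial and temporal H\"older seminorms are controlled by the standard splitting of the domain of integration into a parabolic neighbourhood of the diagonal singularity and its complement, exactly as in the estimates underlying Theorem \ref{thm W and W*}(iii) for $W^*_{\partial\Omega}$, but with one fewer derivative on the kernel, so that the gain in regularity is at least $\alpha$ in space and $\alpha/2$ in time. Finally $V_{\partial\Omega}[\mu](0,\cdot)=0$, since the time integral $\int_0^t$ is empty at $t=0$, so the image indeed lies in the subspace $C_0^{\frac{\alpha}{2};\alpha}$.

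For statement (ii), both compactness claims reduce to composing a continuous map with a compact embedding. For compactness from $C_0^{\frac{\alpha}{2};\alpha}$ into itself, I would use that $V_{\partial\Omega}$ is continuous from $C_0^{\frac{\alpha}{2};\alpha}$ to $C_0^{\frac{1+\alpha}{2};1+\alpha}$ — which follows from Theorem \ref{thmsl}(ii) and the continuity of the trace operator, exactly as already exploited in the proof of Theorem \ref{thm V iso} — and then factor through the compact embedding $C_0^{\frac{1+\alpha}{2};1+\alpha}\hookrightarrow C_0^{\frac{\alpha}{2};\alpha}$ of Proposition \ref{Ascoli Arzela cons prop}(ii). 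For compactness from $C_0^0$ into itself, I would combine the continuity from $C_0^0$ to $C_0^{\frac{\alpha}{2};\alpha}$ established in (i) with the compact embedding $C_0^{\frac{\alpha}{2};\alpha}\hookrightarrow C_0^0$ of Proposition \ref{Ascoli Arzela cons prop}(iv) (taking $\gamma=\alpha$). In both cases the composition of a bounded operator with a compact operator is compact, which gives the claim.

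I expect the main obstacle to be the regularity estimate in (i): verifying the H\"older continuity in both space and time of $V_{\partial\Omega}[\mu]$ for a merely continuous density requires careful parabolic kernel estimates and a judicious splitting of the integration domain near the diagonal singularity. Once (i) is in hand, part (ii) is essentially formal. In view of the fact that the statement is recalled from the literature, the shortest route is to cite the corresponding mapping properties of the single layer heat potential directly and to carry out only the elementary factorisation argument for (ii).
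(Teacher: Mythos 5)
Your proposal is correct and follows essentially the same route as the paper: part (i) is obtained by invoking the known mapping properties of the single layer heat potential from \cite{LaLu17} (your stated ``shortest route'', which is exactly what the paper does, citing \cite[Prop.~7.1--7.2]{LaLu17} rather than carrying out the kernel estimates), and part (ii) is precisely the paper's factorisation of $V_{\partial\Omega}$ through $C_0^{\frac{\alpha}{2};\alpha}$ and $C_0^{\frac{1+\alpha}{2};1+\alpha}$ composed with the compact embeddings of Proposition \ref{Ascoli Arzela cons prop}. The only cosmetic difference is that for the compactness from $C_0^{0}$ into itself the paper invokes Proposition \ref{Ascoli Arzela cons prop} (iii) with $\gamma=0$ where you use (iv) with $\gamma=\alpha$; both are valid.
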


\begin{proof}
    The proof of point (i) for $n \geq 3$ follows by \cite[Prop. 7.1\,-\,7.2]{LaLu17}, see also \cite[Rmk. 3]{LaLu17} (for $n=2$, a straightforward modification of the aforementioned results provides again the needed proof). Then, statement (ii) follows combining point (i) and Proposition \ref{Ascoli Arzela cons prop} (iii), and Theorem \ref{thmsl} (iv) and Proposition \ref{Ascoli Arzela cons prop} (ii). 
\end{proof}

We finally mention a bootstrap regularity result in the spirit of \cite[Lem. 3.3]{DaMi15}. 

\begin{theorem}\label{thm bootstrap}
    Let $\alpha \in ]0,1[$ and $T>0$. Let $\Omega$ be a bounded open subset of $\mathbb{R}^n$ of class $C^{1,\alpha}$. Let $\mu \in C_0^{0}([0,T] \times \partial\Omega)$. If $\left( \pm\frac{1}{2} I + W^\ast_{\partial \Omega} \right) [\mu] \in C_0^{\frac{\alpha}{2};  \alpha}([0,T] \times \partial\Omega)$, then $\mu \in C_0^{\frac{\alpha}{2};  \alpha}([0,T] \times \partial\Omega)$.
\end{theorem}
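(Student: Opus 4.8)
The plan is to isolate $\mu$ algebraically in the hypothesis and then run a two-step bootstrap driven by the smoothing properties of $W^*_{\partial\Omega}$ recorded in Theorem \ref{thm W and W*} (iii). Set
\[
g := \left(\pm\tfrac{1}{2} I + W^*_{\partial\Omega}\right)[\mu] \in C_0^{\frac{\alpha}{2};\alpha}([0,T]\times\partial\Omega),
\]
which is the quantity assumed to be $\alpha$-regular. Rewriting the defining relation gives
\[
\mu = \pm 2\,g \mp 2\,W^*_{\partial\Omega}[\mu],
\]
so that the regularity of $\mu$ is controlled by that of $g$ (already known) together with that of $W^*_{\partial\Omega}[\mu]$, which I will improve iteratively.

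For the first step, since by hypothesis $\mu \in C_0^{0}([0,T]\times\partial\Omega)$, I fix any $\gamma \in\, ]0,\alpha[$ and invoke the first mapping property in Theorem \ref{thm W and W*} (iii) to deduce $W^*_{\partial\Omega}[\mu] \in C_0^{\frac{\gamma}{2};\gamma}([0,T]\times\partial\Omega)$. Because $g \in C_0^{\frac{\alpha}{2};\alpha} \subseteq C_0^{\frac{\gamma}{2};\gamma}$, the displayed identity immediately yields $\mu \in C_0^{\frac{\gamma}{2};\gamma}([0,T]\times\partial\Omega)$. For the second step, I feed this intermediate regularity back into the sharper mapping property in Theorem \ref{thm W and W*} (iii), which sends $C_0^{\frac{\gamma}{2};\gamma}$ into $C_0^{\frac{\alpha}{2};\alpha}$; hence $W^*_{\partial\Omega}[\mu] \in C_0^{\frac{\alpha}{2};\alpha}([0,T]\times\partial\Omega)$. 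Since $g$ already lies in this space, the same identity gives $\mu \in C_0^{\frac{\alpha}{2};\alpha}([0,T]\times\partial\Omega)$, which is the claim.

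The only subtlety here — and the reason a single application does not close the argument — is that $W^*_{\partial\Omega}$ does not map $C_0^{0}$ directly into $C_0^{\frac{\alpha}{2};\alpha}$: starting from mere continuity one gains H\"older regularity only up to an exponent $\gamma$ strictly below $\alpha$, and it is precisely the second estimate, valid when the input is already H\"older continuous, that reaches the target exponent $\alpha$. The two-step scheme is exactly what bridges this gap. I do not expect a genuine obstacle beyond correctly chaining these two continuity statements; the argument is purely linear and uses neither compactness nor any uniqueness result, the sign $\pm$ playing no role since it only affects the scalar factor in front of $g$.
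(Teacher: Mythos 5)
Your argument is correct and is essentially identical to the paper's own proof: both isolate $\mu$ via the identity $\pm\mu = 2\left(\pm\frac{1}{2}I + W^\ast_{\partial\Omega}\right)[\mu] - 2W^\ast_{\partial\Omega}[\mu]$ and then chain the two mapping properties of $W^\ast_{\partial\Omega}$ from Theorem \ref{thm W and W*} (iii), first gaining $C_0^{\frac{\gamma}{2};\gamma}$ regularity for $\gamma \in\, ]0,\alpha[$ and then reaching $C_0^{\frac{\alpha}{2};\alpha}$. The only cosmetic difference is that the paper cites the underlying reference for the second mapping property directly, while you invoke the same statement as recorded in Theorem \ref{thm W and W*} (iii).
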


\begin{proof}
    If $\mu \in C_0^{0}([0,T] \times \partial\Omega)$ then by Theorem \ref{thm W and W*} (iii) we know that $W^\ast_{\partial \Omega}[\mu] \in C_0^{\frac{\gamma}{2};  \gamma}([0,T] \times \partial\Omega)$ for any $\gamma \in ]0,\alpha[$. Then by the trivial relation 
    \begin{equation*}
    \pm\mu = 2 \left(\pm\frac{1}{2} I + W^\ast_{\partial \Omega} \right) [\mu] - 2 W^\ast_{\partial \Omega}[\mu],    
    \end{equation*}
    we deduce that $\mu \in C_0^{\frac{\gamma}{2};  \gamma}([0,T] \times \partial\Omega)$ for any $\gamma \in ]0,\alpha[$. Then, by \cite[Thm. 4.6]{LaLu19}, $W^\ast_{\partial \Omega}[\mu] \in C_0^{\frac{\alpha}{2};  \alpha}([0,T] \times \partial\Omega)$ and the conclusion follows using again the above relation.
\end{proof}

\section{Existence result via Fixed-Point Theorem}\label{sec existence result}
This section is devoted to prove the main result of this paper, namely Theorem \ref{thm u_0} below. We recall that, throughout this section we fix once for all
\begin{equation*}
	\begin{split}
		&\mbox{$\Omega$, $\omega$ bounded open connected subsets of $\mathbb{R}^n$ of class $C^{1,\alpha}$,} 
		\\
		&\mbox{with connected exteriors  $\mathbb{R}^n\setminus \overline{\Omega}$ and $\mathbb{R}^n\setminus \overline{\omega}$ and $\overline{\omega}\subseteq\Omega$}.
	\end{split}
\end{equation*}
Moreover, as mentioned in the Introduction, we fix two functions
\begin{equation}\label{condition on f and G}
f \in C_{0}^{\frac{\alpha}{2}; \alpha}([0,T] \times \partial\Omega), G \in C^{0}([0,T] \times \partial\omega \times \mathbb{R}) \text{ with } G(0,x,0)=0 \text{ for every } x \in \partial\omega.
\end{equation}
We notice that, the assumptions on $G$ imply that the map 
\begin{equation*}
    \mathcal{N}_G \colon C_0^{0}([0,T] \times \partial\omega) \to C_0^{0}([0,T] \times \partial\omega) \text{ is continuous.}    
\end{equation*} 
Moreover, we will assume the following mapping condition on the superposition operators generated by $G$, namely
\begin{equation}\label{condition NG}
\mathcal{N}_{G} \text{ maps } C^{\frac{\alpha}{2};\alpha}([0,T] \times \partial\omega) \text{ into } C^{\frac{\alpha}{2}; \alpha}([0,T] \times \partial\omega).
\end{equation}

By Theorem \ref{thm V iso} and by the uniqueness result provided by Theorem \ref{app thm uni int dir-neu}, we deduce the validity of the following.

\begin{lemma}\label{lemma rappr}
    The map from $C_0^{\frac{\alpha}{2};  \alpha}([0,T] \times \partial\Omega) \times C_0^{\frac{\alpha}{2};  \alpha}([0,T] \times \partial \omega)$ to the space
    \begin{equation*}
        \left\{u\in C_{0}^{\frac{1+\alpha}{2}; 1+\alpha}([0,T] \times (\overline{\Omega} \setminus \omega)) \,:\, \partial_t u  - \Delta u =0 \quad \text{in } ]0,T] \times \Omega \setminus \overline{\omega} \right\}
    \end{equation*}
    that takes a pair $(\mu,\eta)$ to the function $u_{\Omega,\omega}[\mu,\eta]$ defined by
    \begin{equation}\label{U}
    	u_{\Omega,\omega}[\mu,\eta] := (v^+_{\Omega} [\mu] + v^-_{\omega}[\eta])_{| [0,T] \times (\overline{\Omega} \setminus \omega)},
    \end{equation}
    is bijective.
\end{lemma}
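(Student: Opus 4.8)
The plan is to treat well-definedness, injectivity and surjectivity in turn, the last being the substantive point. Well-definedness is immediate from Theorem~\ref{thmsl}: since $\overline{\omega}\subseteq\Omega$ the two boundaries are disjoint, $\overline{\Omega}\setminus\omega\subseteq\overline{\Omega}$ and $\overline{\Omega}\setminus\omega\subseteq\overline{\omega^-}$, so Theorem~\ref{thmsl}(ii) guarantees that $v_\Omega^+[\mu]$ and $v_\omega^-[\eta]$ both lie in $C_0^{\frac{1+\alpha}{2};1+\alpha}$ on the annular region; their sum solves the heat equation on $]0,T]\times(\Omega\setminus\overline{\omega})$ (the first summand is caloric there because $\Omega\setminus\overline{\omega}\subseteq\Omega$ avoids $\partial\Omega$, the second because it avoids $\partial\omega$) and vanishes at $t=0$. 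Hence $u_{\Omega,\omega}[\mu,\eta]$ belongs to the target space and the map is well-defined and linear.

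For injectivity, suppose $u_{\Omega,\omega}[\mu,\eta]=0$ on $[0,T]\times(\overline{\Omega}\setminus\omega)$. I would introduce the globally defined continuous caloric function $U:=v_\Omega[\mu]+v_\omega[\eta]$ on $[0,T]\times\mathbb{R}^n$ and propagate the vanishing through the three regions. On the annulus $\Omega\setminus\overline{\omega}$ one has $U=v_\Omega^+[\mu]+v_\omega^-[\eta]=u_{\Omega,\omega}[\mu,\eta]=0$. On the exterior $\Omega^-$ the term $v_\omega[\eta]$ is caloric across $\partial\Omega$, while $U$ is continuous with $U|_{\partial\Omega}=0$, has zero initial datum and sub-exponential growth (single layer potentials satisfy such a bound, cf.\ \eqref{phi1eq}), so exterior Dirichlet uniqueness (Corollary~\ref{app cor ext dir}) forces $U=0$ on $\overline{\Omega^-}$; consequently $U|_{\partial\omega}=0$. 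On the cavity $\omega$, where $v_\Omega[\mu]$ is caloric, $U$ then solves the interior Dirichlet problem with zero data, and Theorem~\ref{app thm uni int dir-neu} gives $U=0$ on $\overline{\omega}$. Thus $U\equiv0$ on $[0,T]\times\mathbb{R}^n$, so both one-sided normal derivatives vanish on each boundary. Since $v_\omega[\eta]$ has no normal-derivative jump across $\partial\Omega$ and $v_\Omega[\mu]$ none across $\partial\omega$, the jump formulas of Theorem~\ref{thmsl}(iii) yield $\mu=\partial_{\nu_\Omega}v_\Omega^+[\mu]-\partial_{\nu_\Omega}v_\Omega^-[\mu]=0$ and, symmetrically, $\eta=0$.

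For surjectivity, given $u$ in the target space I would pass to its boundary traces $u|_{[0,T]\times\partial\Omega}$ and $u|_{[0,T]\times\partial\omega}$, which lie in $C_0^{\frac{1+\alpha}{2};1+\alpha}$, and solve for $(\mu,\eta)$ the coupled boundary integral system
\begin{equation*}
\begin{cases}
V_{\partial\Omega}[\mu]+v_\omega[\eta]|_{[0,T]\times\partial\Omega}=u|_{[0,T]\times\partial\Omega},\\
v_\Omega[\mu]|_{[0,T]\times\partial\omega}+V_{\partial\omega}[\eta]=u|_{[0,T]\times\partial\omega}.
\end{cases}
\end{equation*}
Because $\partial\Omega$ and $\partial\omega$ are disjoint, the off-diagonal cross-trace operators $\eta\mapsto v_\omega[\eta]|_{\partial\Omega}$ and $\mu\mapsto v_\Omega[\mu]|_{\partial\omega}$ have smooth kernels and are therefore compact; by Theorem~\ref{thm V iso} the diagonal $\mathrm{diag}(V_{\partial\Omega},V_{\partial\omega})$ is an isomorphism of $C_0^{\frac{\alpha}{2};\alpha}(\partial\Omega)\times C_0^{\frac{\alpha}{2};\alpha}(\partial\omega)$ onto $C_0^{\frac{1+\alpha}{2};1+\alpha}(\partial\Omega)\times C_0^{\frac{1+\alpha}{2};1+\alpha}(\partial\omega)$. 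Hence the system operator is a compact perturbation of an isomorphism, i.e.\ Fredholm of index $0$. Its kernel is trivial: a kernel element $(\mu,\eta)$ makes $u_{\Omega,\omega}[\mu,\eta]$ caloric on the annulus with vanishing traces on $\partial\Omega$ and $\partial\omega$ and zero initial datum, hence zero there by uniqueness for the Dirichlet problem on the bounded domain $\Omega\setminus\overline{\omega}$ (parabolic maximum principle, cf.\ Theorem~\ref{app thm uni int dir-neu}), and then $\mu=\eta=0$ by the injectivity just proved. By the Fredholm alternative the system is solvable, producing $(\mu,\eta)$; the resulting $u_{\Omega,\omega}[\mu,\eta]$ and $u$ are caloric on the annulus with equal traces and zero initial datum, so they coincide by the same uniqueness, giving $u=u_{\Omega,\omega}[\mu,\eta]$.

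The main obstacle is the surjectivity step: the two boundary components are genuinely coupled through the cross-trace terms, so one cannot simply invert $V_{\partial\Omega}$ and $V_{\partial\omega}$ separately. The decisive structural facts are that these cross terms are compact (smooth kernel, disjoint boundaries), so that the full boundary operator remains Fredholm of index $0$, and that triviality of its kernel—which reduces to the already established injectivity together with annular Dirichlet uniqueness—upgrades, via the Fredholm alternative, to surjectivity. Some care is also needed in the injectivity step to justify the sub-exponential growth of the potentials so that exterior uniqueness applies.
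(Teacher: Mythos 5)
Your proof is correct and follows essentially the same route the paper intends: the paper compresses the argument into a one-line appeal to Theorem \ref{thm V iso} and the uniqueness results, and your write-up is a faithful expansion of exactly those ingredients (injectivity by propagating the vanishing of $v_\Omega[\mu]+v_\omega[\eta]$ through the exterior and the cavity via Corollary \ref{app cor ext dir} and Theorem \ref{app thm uni int dir-neu} and then applying the jump formulas; surjectivity by the Fredholm-alternative argument on the coupled trace system with compact off-diagonal cross-trace operators). No gaps.
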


In order to represent the boundary condition of a linearization of problem \eqref{princeq}, we introduce a function
\begin{equation}\label{cond beta}
    \beta \in C^{\frac{\alpha}{2};  \alpha}([0,T] \times \partial\omega).
\end{equation}
We now present a uniqueness result of a linear version of problem \eqref{princeq}. The proof follows, for example, by Liebermann \cite[Cor. 5.4]{Li96} (see also Friedman \cite[Lemma 2, p. 146]{Fr08}).

\begin{lemma}\label{beta prob lemma}
    Let $\beta$ be as in \eqref{cond beta}. Then, the unique solution in $C_{0}^{\frac{1+\alpha}{2}; 1+\alpha}([0,T] \times (\overline{\Omega} \setminus \omega))$ of problem
	\begin{equation}\label{beta problem eq}
    \begin{cases}
	\partial_t u - \Delta u = 0 & \quad\text{in } ]0,T] \times (\Omega \setminus \overline{\omega}), 
	\\
    \frac{\partial}{\partial \nu_\Omega} u (t,x) = 0 & \quad \forall (t,x) \in [0,T] \times \partial \Omega, 
	\\
	\frac{\partial}{\partial \nu_\omega} u (t,x) - \beta(t,x) u(t,x) = 0 & \quad \forall (t,x) \in [0,T] \times \partial \omega,
    \\
    u(0,\cdot)=0 & \quad \text{in } \overline{\Omega} \setminus \omega,
    \end{cases}
    \end{equation}
	is $u=0$.
\end{lemma}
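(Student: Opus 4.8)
The plan is to use the classical energy method. Since problem \eqref{beta problem eq} is linear and homogeneous, uniqueness amounts to showing that any solution $u \in C_{0}^{\frac{1+\alpha}{2}; 1+\alpha}([0,T] \times (\overline{\Omega} \setminus \omega))$ of \eqref{beta problem eq} vanishes identically. For $s \in \,]0,T]$ I would introduce the energy
\[
E(s) := \frac{1}{2}\int_{\Omega \setminus \overline{\omega}} |u(s,x)|^2\,dx,
\]
multiply the heat equation by $u$, and integrate over $(\Omega \setminus \overline{\omega}) \times [0,s]$. Since $u(0,\cdot)=0$, the time integral of $\int_{\Omega\setminus\overline{\omega}} u\,\partial_t u\,dx$ telescopes to $E(s)$, while Green's identity rewrites the integral of $u\,\Delta u$ as a gradient term plus a boundary term.

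The key computation is that boundary term. The boundary of $\Omega \setminus \overline{\omega}$ splits as $\partial\Omega \cup \partial\omega$, and (using $\overline{\omega}\subseteq\Omega$) the outward unit normal of $\Omega \setminus \overline{\omega}$ agrees with $\nu_\Omega$ on $\partial\Omega$ and equals $-\nu_\omega$ on $\partial\omega$. Inserting the Neumann condition $\frac{\partial}{\partial\nu_\Omega}u = 0$ on $\partial\Omega$ and the Robin condition $\frac{\partial}{\partial\nu_\omega}u = \beta\,u$ on $\partial\omega$, the boundary contribution collapses to $-\int_{0}^{s}\int_{\partial\omega}\beta\,u^2\,d\sigma\,dt$, yielding the identity
\[
E(s) + \int_{0}^{s}\int_{\Omega \setminus \overline{\omega}} |\nabla u|^2\,dx\,dt = -\int_{0}^{s}\int_{\partial\omega} \beta\,u^2\,d\sigma\,dt .
\]

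Because $\beta$ is only assumed to lie in $C^{\frac{\alpha}{2};\alpha}([0,T]\times\partial\omega)$, with no sign restriction, the right-hand side need not be non-positive, so the argument does not close at once. To overcome this I would estimate $\left|\int_{\partial\omega}\beta\,u^2\,d\sigma\right| \le \|\beta\|_\infty \int_{\partial\omega} u^2\,d\sigma$ and invoke the trace inequality in its interpolated form $\int_{\partial\omega}u^2\,d\sigma \le \varepsilon\,\|\nabla u\|_{L^2(\Omega\setminus\overline{\omega})}^2 + C_\varepsilon\,\|u\|_{L^2(\Omega\setminus\overline{\omega})}^2$, valid on the $C^{1,\alpha}$ domain $\Omega\setminus\overline{\omega}$. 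Choosing $\varepsilon$ so small that $\|\beta\|_\infty\,\varepsilon \le \tfrac12$ absorbs the gradient term into the left-hand side, leaving an estimate of the form $E(s) \le C\int_{0}^{s} E(t)\,dt$ with $C$ depending only on $\beta$ and the geometry. Since $E$ is continuous and $E(0)=0$, Grönwall's inequality forces $E \equiv 0$, whence $u \equiv 0$.

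The main obstacle is the justification of Green's identity: a function in $C_{0}^{\frac{1+\alpha}{2}; 1+\alpha}$ has only one Hölder-continuous spatial derivative up to the boundary, so $\partial_t u$ and the second-order spatial derivatives are a priori available only in the interior. I would handle this by recalling that, being caloric, $u$ is $C^\infty$ in the open cylinder $]0,T[\,\times(\Omega\setminus\overline{\omega})$ (the smoothing property underlying Theorem \ref{thmsl} (i)), carrying out the integration by parts on an interior exhaustion of $\Omega\setminus\overline{\omega}$, and passing to the limit via the uniform continuity of $u$ and $\nabla u$ up to $\partial\Omega\cup\partial\omega$. This is exactly the technical point that is bypassed by appealing to the maximum-principle based uniqueness results of Lieberman \cite[Cor. 5.4]{Li96} and Friedman \cite[Lemma 2, p. 146]{Fr08} cited in the statement; the energy computation sketched above furnishes a self-contained alternative.
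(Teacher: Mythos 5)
Your argument is correct, but it is genuinely different from what the paper does: the paper offers no proof at all for this lemma and simply invokes maximum-principle-based uniqueness results for the third boundary value problem (Lieberman \cite[Cor. 5.4]{Li96}, Friedman \cite[Lemma 2, p. 146]{Fr08}). Your energy identity is right, including the sign bookkeeping (the outward normal of $\Omega\setminus\overline{\omega}$ on $\partial\omega$ is $-\nu_\omega$, so the Robin condition produces exactly $-\int_0^s\int_{\partial\omega}\beta\,u^2$), and your fix for the sign-indefinite $\beta$ --- the interpolated trace inequality $\int_{\partial\omega}u^2\,d\sigma\le\varepsilon\|\nabla u\|_{L^2}^2+C_\varepsilon\|u\|_{L^2}^2$ followed by Gr\"onwall --- is the correct way to close the estimate; it makes transparent why no sign condition on $\beta$ is needed in the parabolic setting, something a naive Hopf-lemma argument at a positive maximum would not give without an extra transformation. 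Your handling of the regularity issue (interior smoothness of caloric functions, exhaustion, passage to the limit using continuity of $u$ and $\nabla u$ up to the boundary) is also consonant with how the paper itself justifies the analogous energy computation in the Appendix (proof of Theorem \ref{app thm ext neu}, via \cite[Lem. 5 and Prop. 2]{Lu18}). In short: the paper's route buys brevity by outsourcing to the literature; yours buys a self-contained, elementary proof in the same spirit as the paper's own Appendix arguments. The only thing I would ask you to make explicit is that $E$ is continuous on $[0,T]$ with $E(0)=0$ (which follows from $u\in C^0$ up to $t=0$ and $u(0,\cdot)=0$), since the differential identity for $E$ is only available on $]0,T[$ and you need to integrate from $\delta>0$ and let $\delta\to0$ before applying Gr\"onwall.
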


In the next proposition, we study an auxiliary boundary operator, $\mathcal{J}_\beta$, which we will exploit in the integral formulation of our problem in order to recast a fixed point equation. In particular, we prove that $\mathcal{J}_\beta$ is an isomorphism in suitable spaces. The two considered frameworks will be important: the first setting will be used in order to apply Leray-Shauder Theorem (see Proposition \ref{prop mu_0} below) and the second setting will be central to deduce that the solution obtained is actually classical (see Theorem \ref{thm u_0}).

\begin{proposition}\label{prop J_beta}
    Let $\beta$ be as in \eqref{cond beta}. Let $\mathcal{J}_\beta = (\mathcal{J}_{\beta,1}, \mathcal{J}_{\beta,2})$ be the map from $C^{0}_0([0,T] \times \partial\Omega) \times C^{0}_0([0,T] \times \partial\omega)$ into itself that takes a pair $(\mu,\eta)$ to the pair $\mathcal{J}_\beta[\mu,\eta]$ defined by
	\begin{equation*}
	\begin{aligned}
	\mathcal{J}_{\beta,1}[\mu,\eta] &:= \left( \frac{1}{2} I + W^\ast_{\partial\Omega} \right) [\mu] + \nu_{\Omega} \cdot \nabla  v^-_{\omega}[\eta]_{|[0,T] \times\partial\Omega},
	\\
	\mathcal{J}_{\beta,2}[\mu,\eta] &:= \left( -\frac{1}{2} I + W^\ast_{\partial\omega} \right) [\eta] +
    \nu_{\omega} \cdot \nabla  v^+_{\Omega}[\mu]_{|[0,T] \times\partial\omega}
    - \beta (v^+_{\Omega}[\mu]_{|[0,T] \times\partial\omega} + V_{\partial\omega}[\eta]).  
	\end{aligned}
	\end{equation*}
	Then the following statements hold.
	\begin{enumerate}
		\item[(i)] $\mathcal{J}_{\beta}$ is a linear isomorphism from $C^0_0([0,T]\times\partial\Omega) \times C^0_0([0,T]\times\partial\omega)$ into itself.
        
        \item[(ii)] $\mathcal{J}_{\beta}$ is a linear isomorphism from $C_0^{\frac{\alpha}{2};  \alpha}([0,T] \times \partial\Omega) \times C_0^{\frac{\alpha}{2};  \alpha}([0,T] \times \partial\omega)$ into itself.
	\end{enumerate}
\end{proposition}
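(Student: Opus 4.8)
The plan is to prove that, in each of the two settings, $\mathcal{J}_\beta$ is a compact perturbation of an isomorphism — hence Fredholm of index $0$ — and then to reduce bijectivity to injectivity via the Fredholm Alternative. Injectivity in the H\"older space will follow from the uniqueness Lemma \ref{beta prob lemma}, and injectivity in $C^0_0$ will be recovered from the H\"older case by a bootstrap.

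First I would split $\mathcal{J}_\beta = \mathcal{A} + \mathcal{C}$, where $\mathcal{A}[\mu,\eta] := (\tfrac12\mu,-\tfrac12\eta)$ is trivially an isomorphism of each product space onto itself and $\mathcal{C}$ gathers all the remaining terms, and then check that $\mathcal{C}$ is compact. The blocks $W^\ast_{\partial\Omega}$ and $W^\ast_{\partial\omega}$ are compact on $C^0_0$ and on $C_0^{\frac{\alpha}{2};\alpha}$ by Theorem \ref{thm W and W*}(iv). The coupling blocks $\eta\mapsto\nu_\Omega\cdot\nabla v^-_\omega[\eta]|_{\partial\Omega}$, $\mu\mapsto\nu_\omega\cdot\nabla v^+_\Omega[\mu]|_{\partial\omega}$ and $\mu\mapsto v^+_\Omega[\mu]|_{\partial\omega}$ have integral kernels living on $\partial\Omega\times\partial\omega$, and since $\overline{\omega}\subseteq\Omega$ gives $\mathrm{dist}(\partial\Omega,\partial\omega)>0$ these kernels are smooth; such operators map into $C_0^{\frac{1+\alpha}{2};1+\alpha}$ and are therefore compact into $C_0^{\frac{\alpha}{2};\alpha}$ and into $C^0_0$ by Proposition \ref{Ascoli Arzela cons prop}. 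Finally $V_{\partial\omega}$ is compact in both settings by Theorem \ref{thm V}(ii) and multiplication by $\beta\in C^{\frac{\alpha}{2};\alpha}$ is bounded, so the terms $\beta\,(v^+_\Omega[\mu]|_{\partial\omega}+V_{\partial\omega}[\eta])$ are compact too. This makes $\mathcal{J}_\beta$ Fredholm of index $0$ in both settings, so it is an isomorphism precisely when it is injective.

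For injectivity in setting (ii) I would take $(\mu,\eta)$ H\"older with $\mathcal{J}_\beta[\mu,\eta]=0$ and form $u := u_{\Omega,\omega}[\mu,\eta]$ from \eqref{U}, which is caloric in $]0,T]\times(\Omega\setminus\overline{\omega})$ with $u(0,\cdot)=0$ by Theorem \ref{thmsl}(i). The point is to recognise the two components of $\mathcal{J}_\beta$ as the boundary data of $u$: the jump formula of Theorem \ref{thmsl}(iii) on $\partial\Omega$ (where $v^-_\omega[\eta]$ is smooth) gives $\nu_\Omega\cdot\nabla u|_{\partial\Omega}=\mathcal{J}_{\beta,1}[\mu,\eta]=0$, while on $\partial\omega$ (where $v^+_\Omega[\mu]$ is smooth) the same formula together with the continuity of the single layer gives $\nu_\omega\cdot\nabla u|_{\partial\omega}-\beta\,u|_{\partial\omega}=\mathcal{J}_{\beta,2}[\mu,\eta]=0$. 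Hence $u$ solves \eqref{beta problem eq}, so $u=0$ by Lemma \ref{beta prob lemma}, and the bijectivity of the representation in Lemma \ref{lemma rappr} forces $(\mu,\eta)=(0,0)$.

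For injectivity in setting (i) I would bootstrap. Given $(\mu,\eta)\in C^0_0\times C^0_0$ with $\mathcal{J}_\beta[\mu,\eta]=0$, the first equation reads $(\tfrac12 I+W^\ast_{\partial\Omega})[\mu]=-\nu_\Omega\cdot\nabla v^-_\omega[\eta]|_{\partial\Omega}$, whose right-hand side is H\"older by the smoothing property above, so $\mu\in C_0^{\frac{\alpha}{2};\alpha}$ by Theorem \ref{thm bootstrap}. Feeding this back, the second equation reads $(-\tfrac12 I+W^\ast_{\partial\omega})[\eta]=-\nu_\omega\cdot\nabla v^+_\Omega[\mu]|_{\partial\omega}+\beta\,(v^+_\Omega[\mu]|_{\partial\omega}+V_{\partial\omega}[\eta])$, whose right-hand side is now H\"older as well (using $\mu\in C_0^{\frac{\alpha}{2};\alpha}$, that $V_{\partial\omega}$ maps $C^0_0$ into $C_0^{\frac{\alpha}{2};\alpha}$ by Theorem \ref{thm V}(i), and that $C^{\frac{\alpha}{2};\alpha}$ is a product algebra), so $\eta\in C_0^{\frac{\alpha}{2};\alpha}$ by Theorem \ref{thm bootstrap} again. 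Then part (ii) gives $(\mu,\eta)=(0,0)$. The main obstacle, to my mind, is the injectivity step: everything hinges on matching $\mathcal{J}_{\beta,1}$ and $\mathcal{J}_{\beta,2}$ exactly with the Neumann and Robin traces of $u_{\Omega,\omega}[\mu,\eta]$ with the correct jump-formula signs, after which Lemma \ref{beta prob lemma} and Lemma \ref{lemma rappr} close the argument; the compactness of the coupling blocks is comparatively routine, resting only on $\mathrm{dist}(\partial\Omega,\partial\omega)>0$.
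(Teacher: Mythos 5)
Your proposal is correct and follows essentially the same route as the paper: the splitting $\mathcal{J}_\beta=\bar{\mathcal{J}}+\mathcal{J}^C_\beta$ with $\bar{\mathcal{J}}[\mu,\eta]=(\tfrac12\mu,-\tfrac12\eta)$, compactness of the remainder, the Fredholm alternative, injectivity via the jump formulas together with Lemma \ref{beta prob lemma} and Lemma \ref{lemma rappr}, and the regularity bootstrap of Theorem \ref{thm bootstrap} to pass between the two settings. The only (harmless) difference is organisational: the paper proves injectivity in the $C^0_0$ setting first (bootstrapping the null-space to H\"older inside that argument) and then obtains (ii) from (i) via the Open Mapping Theorem, whereas you run the Fredholm argument in both settings and deduce the $C^0_0$ injectivity from the H\"older one.
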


\begin{proof}
We start proving (i). Define $\bar{\mathcal{J}} := (\bar{\mathcal{J}}_1, \bar{\mathcal{J}}_2)$ to be the linear operator from $C^0_0([0,T]\times\partial\Omega) \times C^0_0([0,T]\times\partial\omega)$ into itself such that
\begin{equation*}
    \bar{\mathcal{J}}_1[\mu,\eta] := \frac{1}{2} \mu, \quad \bar{\mathcal{J}}_2[\mu,\eta] := -\frac{1}{2} \eta,
\end{equation*}
for all $(\mu,\eta) \in C^0_0([0,T]\times\partial\Omega) \times C^0_0([0,T]\times\partial\omega)$. Clearly $\bar{\mathcal{J}}$ is a linear homeomorphism.

Moreover, by the
results of \cite[Lemma A.2, Lemma A.3]{DaLu23} on non-autonomous composition operators and on
time-dependent integral operators with non-singular kernels, by Theorem \ref{thm W and W*} (iv) and Theorem \ref{thm V} (ii) on the compactness of the operator $W^\ast_{\partial\Omega}$ from $C^0_0([0,T] \times \partial\Omega)$ into itself and of the operators $W^\ast_{\partial\omega}$ and $V_{\partial\omega}$ from $C^0_0([0,T] \times \partial\omega)$ into itself, and by the bilinearity and continuity of the product from $C_0^{\frac{\alpha}{2};  \alpha}([0,T] \times \partial\omega) \times C^0_0([0,T] \times \partial\omega)$ to $C^0_0([0,T]\times \partial\omega)$, we deduce that the map from $C^0_0([0,T]\times\partial\Omega) \times C^0_0([0,T]\times\partial\omega)$ into itself that takes a pair $(\mu,\eta)$ to the pair $\mathcal{J}^C_{\beta}[\mu,\eta]$ defined by
\begin{equation*}
\begin{aligned} 
	\mathcal{J}^C_{\beta,1}[\mu,\eta] &:= W^\ast_{\partial\Omega}[\mu] + \nu_{\Omega} \cdot \nabla  v^-_{\omega}[\eta]_{|[0,T] \times\partial\Omega} && \text{on } [0,T] \times \partial\Omega,
    \\
    \mathcal{J}^C_{\beta,2}[\mu,\eta] &:=  W^\ast_{\partial\omega}[\eta]
    +
    \nu_{\omega} \cdot \nabla  v^+_{\Omega}[\mu]_{|[0,T] \times\partial\omega}
    - \beta (v^+_{\Omega}[\mu]_{|[0,T] \times\partial\omega} + V_{\partial\omega}[\eta])  && \text{on } [0,T] \times \partial\omega,
\end{aligned} 
\end{equation*}
is compact. Since compact perturbations of
linear homeomorphisms are Fredholm operators of index $0$, we have that $\mathcal{J}_{\beta} = \bar{\mathcal{J}} + \mathcal{J}^C_{\beta}$
is a Fredholm operator of index $0$.  Hence, to prove point $(i)$, it suffices to establish that $\mathcal{J}_\beta$ is injective. Thus, we now assume that $(\mu,\eta) \in C^0_0([0,T]\times\partial\Omega) \times C^0_0([0,T]\times\partial\omega)$ and that
\begin{equation}\label{J_beta=0}
\mathcal{J}_{\beta}[\mu,\eta] = (0,0).
\end{equation}

Then, by the
results of \cite[Lemma A.2, Lemma A.3]{DaLu23} on non-autonomous composition operators and on
time-dependent integral operators with non-singular kernels, we know that $\nu_{\Omega} \cdot \nabla  v^-_{\omega}[\eta]_{|[0,T] \times\partial\Omega} \in C_0^{\frac{\alpha}{2},\alpha}([0,T]\times \partial\Omega)$, 
and	$v^+_{\Omega}[\mu]_{|[0,T] \times\partial\omega}, \, \nu_{\omega} \cdot \nabla  v^+_{\Omega}[\mu]_{|[0,T] \times\partial\omega} \in C_0^{\frac{\alpha}{2},\alpha}([0,T]\times \partial\omega)$. By Theorem \ref{thm V} (i), $V_{\partial\omega}[\eta] \in C_0^{\frac{\alpha}{2},\alpha}([0,T]\times \partial\omega)$. Hence, by \eqref{J_beta=0} and by the membership of $\beta \in C^{\frac{\alpha}{2},\alpha}([0,T]\times \partial\omega)$, we obtain that 
\begin{equation*}
    \left(\frac{1}{2} I + W^\ast_{\partial\Omega} \right) [\mu] \in C_0^{\frac{\alpha}{2},\alpha}([0,T]\times \partial\Omega),
    \left(-\frac{1}{2} I + W^\ast_{\partial\omega} \right) [\eta],\in C_0^{\frac{\alpha}{2},\alpha}([0,T]\times \partial\omega).
\end{equation*}
Then Theorem \ref{thm bootstrap} implies $(\mu,\eta) \in C_0^{\frac{\alpha}{2};  \alpha}([0,T] \times \partial\Omega) \times C_0^{\frac{\alpha}{2};  \alpha}([0,T] \times \partial\omega)$. 

By the jump formulas (cf.~Theorem \ref{thmsl} (iii)), and by \eqref{J_beta=0}, we deduce that the function $u_{\Omega,\omega}[\mu,\eta]$ defined by \eqref{U} is a solution of the boundary value problem \eqref{beta problem eq}.
Then by Lemma \ref{beta prob lemma}, we have that  $u_{\Omega,\omega}[\mu,\eta] = 0$,  which implies $(\mu,\eta)=(0,0)$, by the uniqueness of the representation provided by Lemma \ref{lemma rappr}.

The proof of statement (ii) proceeds as follows.
First we note that $\mathcal{J}_\beta$ is continuous from $C_0^{\frac{\alpha}{2};  \alpha}([0,T] \times \partial\Omega) \times C_0^{\frac{\alpha}{2}; \alpha}([0,T] \times \partial\omega)$ into itself (cf.~\cite[Lemma A.2, Lemma A.3]{DaLu23} on non-autonomous composition operators and on
time-dependent integral operators with non-singular kernels and Theorems \ref{thm W and W*} and \ref{thm V}). Then we observe that if 
\begin{equation*}
\mathcal{J}_\beta[\mu,\eta] \in C_0^{\frac{\alpha}{2};  \alpha}([0,T] \times \partial\Omega) \times C_0^{\frac{\alpha}{2}; \alpha}([0,T] \times \partial\omega)    
\end{equation*}
for some $(\mu,\eta) \in C_0^0([0,T] \times \partial\Omega) \times C_0^0([0,T] \times \partial\omega)$, then $(\mu,\eta) \in C_0^{\frac{\alpha}{2};  \alpha}([0,T] \times \partial\Omega) \times C_0^{\frac{\alpha}{2}; \alpha}([0,T] \times \partial\omega)$ (see also the argument used after \eqref{J_beta=0} to prove that $(\mu,\eta)$ belongs to $C_0^{\frac{\alpha}{2};  \alpha}([0,T] \times \partial\Omega) \times C_0^{\frac{\alpha}{2}; \alpha}([0,T] \times \partial\omega)$). Then, by statement (i) we deduce that $\mathcal{J}_\beta$ is a bijective continuous linear map from $C_0^{\frac{\alpha}{2};  \alpha}([0,T] \times \partial\Omega) \times C_0^{\frac{\alpha}{2}; \alpha}([0,T] \times \partial\omega)$ into itself. By the Open Mapping Theorem it follows that $\mathcal{J}_\beta$ is a linear homeomorphism from $C_0^{\frac{\alpha}{2};  \alpha}([0,T] \times \partial\Omega) \times C_0^{\frac{\alpha}{2}; \alpha}([0,T] \times \partial\omega)$ into itself.
\end{proof}

We are now ready to convert  \eqref{princeq} into a system of integral equations. We remark that the integral system for problem \eqref{princeq} associated to the representation given by Lemma \ref{lemma rappr} is given by \eqref{intsys} below; however, equation \eqref{princintsys} is a suitable version of \eqref{intsys} that can be seen as a fixed point equation for a compact operator between suitable spaces (see the definition of $\mathcal{T}$ in \eqref{T_beta eq} below).

\begin{proposition}\label{propintsys}
    Let $f,G$ be as in \eqref{condition on f and G}. Let $\beta$ be as in \eqref{cond beta}. Let assumption \eqref{condition NG} holds. Let $(\mu,\eta) \in C_0^{\frac{\alpha}{2};  \alpha}([0,T] \times \partial\Omega) \times C_0^{\frac{\alpha}{2}; \alpha}([0,T] \times \partial\omega)$. Let $u_{\Omega,\omega}$ be defined by \eqref{U} and  $\mathcal{J}_\beta$ be as in Proposition \ref{prop J_beta}.
	Then 
    \begin{equation*}
        u_{\Omega,\omega}[\mu,\eta] \in C_{0}^{\frac{1+\alpha}{2}; 1+\alpha}([0,T] \times (\overline{\Omega} \setminus \omega))    
    \end{equation*}
    is a solution of \eqref{princeq} if and only if 
	\begin{equation}\label{princintsys}
	\begin{aligned}
	\begin{pmatrix}
	\mu
	\\
	\eta
	\end{pmatrix}
	= \mathcal{J}_\beta^{(-1)}&
	\left[
	\begin{pmatrix}
	f
	\\
	\mathcal{N}_{G}(v^+_{\Omega}[\mu]_{|[0,T] \times\partial\omega} + V_{\partial\omega}[\eta])
	\end{pmatrix}
	-
    \begin{pmatrix}
    0
    \\
    \beta \left( v^+_{\Omega}[\mu]_{|[0,T] \times\partial\omega} + V_{\partial\omega}[\eta] \right) 
    \end{pmatrix}
    \right].
	\end{aligned}
	\end{equation}
\end{proposition}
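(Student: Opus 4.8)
The plan is to verify the two nonlinear boundary conditions of \eqref{princeq} directly, exploiting the fact that by Lemma \ref{lemma rappr} the function $u_{\Omega,\omega}[\mu,\eta]$ automatically solves the heat equation in $]0,T]\times(\Omega\setminus\overline{\omega})$ and vanishes at $t=0$. Thus the first and fourth lines of \eqref{princeq} hold for free, and only the Neumann condition on $\partial\Omega$ and the Robin-type condition on $\partial\omega$ need to be translated into equations for the pair $(\mu,\eta)$. Each translation amounts to a jump-formula computation followed by the algebraic rearrangement that reconstructs the definition of $\mathcal{J}_\beta$.

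First I would compute the normal derivative on $\partial\Omega$. Since $\overline{\omega}\subseteq\Omega$, the potential $v^-_\omega[\eta]$ is smooth in a neighbourhood of $\partial\Omega$, so its normal derivative there carries no jump, while $v^+_\Omega[\mu]$ is the interior trace of the single layer potential and obeys the jump formula of Theorem \ref{thmsl} (iii). This gives
\[
\nu_\Omega\cdot\nabla u_{\Omega,\omega}[\mu,\eta]_{|[0,T]\times\partial\Omega} = \left(\tfrac12 I + W^\ast_{\partial\Omega}\right)[\mu] + \nu_\Omega\cdot\nabla v^-_\omega[\eta]_{|[0,T]\times\partial\Omega} = \mathcal{J}_{\beta,1}[\mu,\eta],
\]
so that the Neumann condition on $\partial\Omega$ is precisely $\mathcal{J}_{\beta,1}[\mu,\eta]=f$. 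Symmetrically, on $\partial\omega$ the term $v^+_\Omega[\mu]$ is smooth while $v^-_\omega[\eta]$ is the exterior trace of the single layer potential; the jump formula of Theorem \ref{thmsl} (iii), now with the minus sign since $\Omega\setminus\overline{\omega}$ lies on the exterior side of $\omega$, yields
\[
\nu_\omega\cdot\nabla u_{\Omega,\omega}[\mu,\eta]_{|[0,T]\times\partial\omega} = \left(-\tfrac12 I + W^\ast_{\partial\omega}\right)[\eta] + \nu_\omega\cdot\nabla v^+_\Omega[\mu]_{|[0,T]\times\partial\omega}.
\]

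Using the continuity of the single layer potential to identify the boundary trace $u_{\Omega,\omega}[\mu,\eta]_{|[0,T]\times\partial\omega} = v^+_\Omega[\mu]_{|[0,T]\times\partial\omega} + V_{\partial\omega}[\eta]$, the Robin-type condition $\nu_\omega\cdot\nabla u = \mathcal{N}_G(u_{|[0,T]\times\partial\omega})$ then reads, after adding and subtracting the term $\beta(v^+_\Omega[\mu]_{|[0,T]\times\partial\omega} + V_{\partial\omega}[\eta])$ in order to reconstruct the definition of $\mathcal{J}_{\beta,2}$,
\[
\mathcal{J}_{\beta,2}[\mu,\eta] = \mathcal{N}_G\bigl(v^+_\Omega[\mu]_{|[0,T]\times\partial\omega} + V_{\partial\omega}[\eta]\bigr) - \beta\bigl(v^+_\Omega[\mu]_{|[0,T]\times\partial\omega} + V_{\partial\omega}[\eta]\bigr).
\]
Collecting the two scalar equations identifies $\mathcal{J}_\beta[\mu,\eta]$ with the right-hand side appearing in \eqref{princintsys}; since $\mathcal{J}_\beta$ is an isomorphism by Proposition \ref{prop J_beta} (ii), applying $\mathcal{J}_\beta^{(-1)}$ shows this identity is equivalent to \eqref{princintsys}. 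As every step above is an equivalence, the \emph{if and only if} statement follows.

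I expect the only delicate points to be, first, the correct sign bookkeeping in the jump relations, in particular recognising that on $\partial\omega$ the relevant trace is the exterior one, so that the coefficient $-\tfrac12$ matches the definition of $\mathcal{J}_{\beta,2}$; and second, checking that the right-hand side of \eqref{princintsys} actually lies in $C_0^{\frac{\alpha}{2};\alpha}([0,T]\times\partial\Omega)\times C_0^{\frac{\alpha}{2};\alpha}([0,T]\times\partial\omega)$, so that $\mathcal{J}_\beta^{(-1)}$ can be applied. For the latter I would note that $v^+_\Omega[\mu]_{|[0,T]\times\partial\omega} + V_{\partial\omega}[\eta]\in C_0^{\frac{1+\alpha}{2};1+\alpha}\subseteq C_0^{\frac{\alpha}{2};\alpha}$ by Theorem \ref{thmsl} and Theorem \ref{thm V iso}, that assumption \eqref{condition NG} together with the normalisation $G(0,x,0)=0$ ensures $\mathcal{N}_G$ of this trace is again in $C_0^{\frac{\alpha}{2};\alpha}([0,T]\times\partial\omega)$, and that multiplication by $\beta\in C^{\frac{\alpha}{2};\alpha}$ preserves this space; together with $f\in C_0^{\frac{\alpha}{2};\alpha}([0,T]\times\partial\Omega)$ this places the whole right-hand side in the domain of $\mathcal{J}_\beta^{(-1)}$.
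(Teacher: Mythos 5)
Your proposal is correct and follows essentially the same route as the paper: the jump formulas of Theorem \ref{thmsl} (iii) convert \eqref{princeq} into the integral system, the $\beta$ term is added and subtracted to reconstruct $\mathcal{J}_\beta$, and Proposition \ref{prop J_beta} (ii) inverts it. You simply spell out in more detail the sign bookkeeping on $\partial\omega$ and the $C_0^{\frac{\alpha}{2};\alpha}$ membership of the right-hand side, both of which the paper leaves implicit, and both of which you handle correctly.
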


\begin{proof}
	By Theorem \ref{thmsl} (iii), we know that if $(\mu,\eta) \in C_0^{\frac{\alpha}{2};  \alpha}([0,T] \times \partial\Omega) \times C_0^{\frac{\alpha}{2}; \alpha}([0,T] \times \partial\omega)$ then the function $u_{\Omega,\omega}[\mu,\eta]$ defined by \eqref{U} is a solution of problem \eqref{princeq} if an only if
	\begin{equation}\label{intsys}
    \begin{pmatrix}
	\left( \frac{1}{2} I + W^\ast_{\partial\Omega} \right) [\mu] + \nu_{\Omega} \cdot \nabla  v^-_{\omega}[\eta]_{|[0,T] \times\partial\Omega}
	\\
	\left( -\frac{1}{2} I + W^\ast_{\partial\omega} \right) [\eta] + \nu_{\omega} \cdot \nabla  v^+_{\Omega}[\mu]_{|[0,T] \times\partial\omega} 
	\end{pmatrix} =
	\begin{pmatrix}
	f
	\\
	\mathcal{N}_{G}(v^+_{\Omega}[\mu]_{|[0,T] \times\partial\omega} + V_{\partial\omega}[\eta])
	\end{pmatrix}.
	\end{equation}
	Then, subtracting in both sides of \eqref{intsys} the term
	\begin{equation*}
	\begin{pmatrix}
    0
    \\
    \beta \left( v^+_{\Omega}[\mu]_{|[0,T] \times\partial\omega} + V_{\partial\omega}[\eta] \right) 
    \end{pmatrix}
	\in C_0^{\frac{\alpha}{2};  \alpha}([0,T] \times \partial\Omega) \times C_0^{\frac{\alpha}{2}; \alpha}([0,T] \times \partial\omega),
	\end{equation*} 
	and using Proposition \ref{prop J_beta} (ii), the validity of the statement follows. 
\end{proof}

We now introduce an auxiliary map. For $f,G$ as in \eqref{condition on f and G}, $\beta$ as in \eqref{cond beta} and $\mathcal{J}_\beta$ as in Proposition \ref{prop J_beta}, we denote by $\mathcal{T}_\beta$ the map from $C_0^0([0,T] \times \partial\Omega) \times C_0^0([0,T] \times \partial\omega)$ into itself defined by
\begin{equation}\label{T_beta eq}
\mathcal{T}_\beta(\mu,\eta) := \mathcal{J}_\beta^{(-1)} 
	\left[
	\begin{pmatrix}
	f
	\\
	\mathcal{N}_{G}(v^+_{\Omega}[\mu]_{|[0,T] \times\partial\omega} + V_{\partial\omega}[\eta])
	\end{pmatrix}
	-
    \begin{pmatrix}
    0
    \\
    \beta \left( v^+_{\Omega}[\mu]_{|[0,T] \times\partial\omega} + V_{\partial\omega}[\eta] \right) 
    \end{pmatrix}
    \right].
\end{equation}

We study the continuity and compactness of $\mathcal{T}_\beta$ in the following proposition. We recall that a nonlinear operator is said to be compact if and only if it is continuous and maps bounded sets into relatively compact sets. 

\begin{proposition}\label{Tcontcomp}
Let $f,G$ be as in \eqref{condition on f and G}. Let $\beta$ be as in \eqref{cond beta}. Let $\mathcal{T}_\beta$ be as in \eqref{T_beta eq}. Then $\mathcal{T}_\beta$ is a continuous (nonlinear) operator from $C_0^0([0,T] \times \partial\Omega) \times C_0^0([0,T] \times \partial\omega)$ into itself and is compact. 
\end{proposition}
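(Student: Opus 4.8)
The plan is to prove continuity and compactness of $\mathcal{T}_\beta$ by decomposing it as the composition of several maps whose continuity and compactness properties are already available from the earlier results. The key observation is that $\mathcal{T}_\beta = \mathcal{J}_\beta^{(-1)} \circ \mathcal{S}_\beta$, where $\mathcal{S}_\beta$ denotes the nonlinear map sending $(\mu,\eta)$ to the right-hand side of \eqref{T_beta eq}, namely the pair whose first component is the fixed datum $f$ and whose second component is $\mathcal{N}_G(v^+_\Omega[\mu]_{|[0,T]\times\partial\omega} + V_{\partial\omega}[\eta]) - \beta(v^+_\Omega[\mu]_{|[0,T]\times\partial\omega} + V_{\partial\omega}[\eta])$. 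Since $\mathcal{J}_\beta^{(-1)}$ is a linear homeomorphism from $C_0^0([0,T]\times\partial\Omega)\times C_0^0([0,T]\times\partial\omega)$ into itself by Proposition \ref{prop J_beta} (i), it is in particular continuous and maps relatively compact sets to relatively compact sets. Hence it suffices to establish that $\mathcal{S}_\beta$ is continuous and compact; the composition will then inherit both properties.

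For continuity and compactness of $\mathcal{S}_\beta$, I would first introduce the auxiliary trace map $\Phi(\mu,\eta) := v^+_\Omega[\mu]_{|[0,T]\times\partial\omega} + V_{\partial\omega}[\eta]$, which appears inside both the Nemytskii term and the $\beta$-multiplication term. The crucial gain in regularity comes from the smoothing properties of the layer potentials: by Theorem \ref{thmsl} (ii) the map $\mu \mapsto v^+_\Omega[\mu]$ is continuous into $C_0^{\frac{1+\alpha}{2};1+\alpha}$, and restricting to $[0,T]\times\partial\omega$ (a positive-distance trace onto $\partial\omega$, since $\overline{\omega}\subseteq\Omega$, so the kernel is smooth) keeps us in a space at least as regular as $C_0^{\frac{\alpha}{2};\alpha}([0,T]\times\partial\omega)$; likewise $\eta \mapsto V_{\partial\omega}[\eta]$ is continuous from $C_0^0$ into $C_0^{\frac{\alpha}{2};\alpha}([0,T]\times\partial\omega)$ by Theorem \ref{thm V} (i). Therefore $\Phi$ is continuous from $C_0^0\times C_0^0$ into $C_0^{\frac{\alpha}{2};\alpha}([0,T]\times\partial\omega)$, and composing with the compact embedding of $C_0^{\frac{\alpha}{2};\alpha}$ into $C_0^0$ (Proposition \ref{Ascoli Arzela cons prop} (iv)) shows that $\Phi$, viewed as a map into $C_0^0([0,T]\times\partial\omega)$, is compact: it is continuous and sends bounded sets into bounded subsets of $C_0^{\frac{\alpha}{2};\alpha}$, which are relatively compact in $C_0^0$.

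The remaining step is to feed $\Phi$ into the Nemytskii operator and the $\beta$-term, taking the output in $C_0^0([0,T]\times\partial\omega)$. By the assumptions on $G$ in \eqref{condition on f and G}, the superposition operator $\mathcal{N}_G$ is continuous from $C_0^0([0,T]\times\partial\omega)$ into itself, as noted right after \eqref{condition on f and G}. Multiplication by the fixed $\beta \in C^{\frac{\alpha}{2};\alpha}([0,T]\times\partial\omega)$ is a continuous linear (hence continuous) operator on $C_0^0([0,T]\times\partial\omega)$. Thus the second component of $\mathcal{S}_\beta$ is $\mathcal{N}_G\circ\Phi - \beta\cdot\Phi$, a sum of compositions of the compact map $\Phi$ with continuous maps; since a continuous map postcomposed with a compact map is compact, each summand is compact, and their sum is compact. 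The first component is the constant map $f$, which is trivially compact. Hence $\mathcal{S}_\beta$ is compact into $C_0^0\times C_0^0$, and consequently $\mathcal{T}_\beta = \mathcal{J}_\beta^{(-1)}\circ\mathcal{S}_\beta$ is compact as well.

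The main obstacle, and the step deserving the most care, is verifying the regularity claim that $\Phi$ actually lands in $C_0^{\frac{\alpha}{2};\alpha}([0,T]\times\partial\omega)$ so that the compact embedding can be invoked; this rests on the fact that the trace $v^+_\Omega[\mu]_{|[0,T]\times\partial\omega}$ is evaluated on $\partial\omega$, which is separated from the support $\partial\Omega$ of the density, making the relevant integral kernel non-singular and therefore explicitly smoothing (this is precisely where \cite[Lemma A.2, Lemma A.3]{DaLu23} on time-dependent integral operators with non-singular kernels enters, exactly as in the proof of Proposition \ref{prop J_beta}). Once this smoothing is in hand, the rest is a clean bookkeeping of continuity and the standard fact that compactness propagates through composition with continuous maps on either side.
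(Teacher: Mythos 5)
Your proof is correct and follows essentially the same route as the paper: compactness of $v^+_\Omega[\cdot]_{|[0,T]\times\partial\omega}$ and of $V_{\partial\omega}$ on the continuous-function spaces (via the non-singular-kernel results of \cite[Lemma A.2, Lemma A.3]{DaLu23}, Theorem \ref{thm V} (ii) and the compact embeddings of Proposition \ref{Ascoli Arzela cons prop}), continuity of $\mathcal{N}_G$ and of multiplication by $\beta$, and finally the isomorphism property of $\mathcal{J}_\beta$ from Proposition \ref{prop J_beta} (i). The only minor slip is the intermediate appeal to Theorem \ref{thmsl} (ii), which requires densities in $C_0^{\frac{\alpha}{2};\alpha}([0,T]\times\partial\Omega)$ rather than $C_0^0([0,T]\times\partial\Omega)$; your closing paragraph correctly replaces this with the non-singular-kernel argument, which is exactly the justification the paper uses.
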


\begin{proof}
By the results of \cite[Lemma A.2, Lemma A.3]{DaLu23} on non-autonomous composition operators and on time-dependent integral operators with non-singular kernels and by Proposition \ref{Ascoli Arzela cons prop} (iv) with $\gamma = \alpha$, $v^+_{\Omega}[\cdot]_{|[0,T] \times\partial\omega}$ is compact from $C_0^0([0,T] \times \partial\Omega)$ into $C_0^0([0,T] \times \partial\omega)$. By Theorem \ref{thm V} (ii), $V_{\partial\omega}$ is compact from $C_0^0([0,T] \times \partial\omega)$ into itself. Hence, by the bilinearity and continuity of the product of continuous functions, the map from $C_0^0([0,T] \times \partial\Omega) \times C_0^0([0,T] \times \partial\omega)$ into itself that takes the pair $(\mu,\eta)$ to the pair given by
\begin{equation*}
\begin{pmatrix}
0
\\
\beta \left( v^+_{\Omega}[\mu]_{|[0,T] \times\partial\omega} + V_{\partial\omega}[\eta] \right) 
\end{pmatrix}
\end{equation*} 
is continuous and compact. Moreover, by the assumptions on $G$, the operator $\mathcal{N}_{G}$ is continuous from $C_0^0([0,T] \times \partial\omega)$ into itself. Thus, the map from $C_0^0([0,T] \times \partial\Omega) \times C_0^0([0,T] \times \partial\omega)$ to itself that takes the pair $(\mu,\eta)$ to the pair
\begin{equation*}
\begin{pmatrix}
f
\\
\mathcal{N}_{G}(v^+_{\Omega}[\mu]_{|[0,T] \times\partial\omega} + V_{\partial\omega}[\eta])
\end{pmatrix}
-
\begin{pmatrix}
0
\\
\beta \left( v^+_{\Omega}[\mu]_{|[0,T] \times\partial\omega} + V_{\partial\omega}[\eta] \right) 
\end{pmatrix}
\end{equation*}
is continuous and compact. Finally, by Proposition \ref{prop J_beta} (i),  $\mathcal{J}_\beta$ is a linear isomorphism from $C^0_0([0,T]\times\partial\Omega) \times C^0_0([0,T]\times\partial\omega)$, hence the statement follows.
\end{proof}

In what follows we will assume the following growth condition on the superposition operator $\mathcal{N}_G$ with respect to the function $\beta$ (cf. \eqref{cond beta}), namely there exists a positive constant $C_G>0$ and an exponent $\delta \in ]0,1[$ such that 
\begin{equation}\label{condition G}
\left\| \mathcal{N}_{G}(h) - \beta h \right\|_{C^{0}([0,T] \times \partial\omega)} \leq C_G \left(1+\|h\|_{C^{0}([0,T] \times \partial\omega)} \right)^\delta \quad \forall h \in C^{0}([0,T] \times \partial\omega).
\end{equation}
In Proposition \ref{prop mu_0} below, we exploit the above condition and we prove the existence of a solution of the system \eqref{princintsys} in the Banach space
\begin{equation*}
    C_0^{0}([0,T] \times \partial\Omega) \times C_0^{0}([0,T] \times \partial\omega).   
\end{equation*}
Our argument is based on the Leray-Schauder Fixed-Point Theorem (cf.~Gilbarg and Trudinger \cite[Thm.~11.3]{GiTr83}). For the reader's convenience we present its statement below.

\begin{theorem}[Leray-Schauder Fixed-Point Theorem]\label{Thm Leray Schauder}
Let $\mathcal{X}$ be a Banach space. Let $\mathcal{T}$ be a continuous operator from $\mathcal{X}$ into itself. If $\mathcal{T}$ is compact and there exists a constant $M \in ]0,+\infty[$ such that $\|x\|_{\mathcal{X}} \leq M$ for all $(x,\lambda) \in \mathcal{X} \times [0,1]$ satisfying $x=\lambda \mathcal{T}(x)$, then $\mathcal{T}$ has at least one fixed point $x \in \mathcal{X}$ such that $\|x\|_{\mathcal{X}} \leq M$.
\end{theorem}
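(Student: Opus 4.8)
The plan is to deduce the statement from Schauder's fixed-point theorem by truncating $\mathcal{T}$ to a self-map of a large closed ball and then using the a priori bound hypothesis to rule out fixed points of the truncation that fail to be fixed points of $\mathcal{T}$ itself.

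First I would fix a radius $R > M$ and introduce the radial retraction $\rho \colon \mathcal{X} \to \overline{B(0,R)}$ given by $\rho(x) := x$ when $\|x\|_{\mathcal{X}} \le R$ and $\rho(x) := R\,x/\|x\|_{\mathcal{X}}$ when $\|x\|_{\mathcal{X}} > R$. This map is continuous (the two definitions agree on the sphere $\|x\|_{\mathcal{X}} = R$) and takes values in $\overline{B(0,R)}$. I would then set $\tilde{\mathcal{T}} := \rho \circ \mathcal{T}$, which is a continuous self-map of $\overline{B(0,R)}$. Writing $K := \overline{\mathcal{T}(\overline{B(0,R)})}$, the compactness of $\mathcal{T}$ makes $K$ compact, whence $\rho(K)$ is compact and $\tilde{\mathcal{T}}(\overline{B(0,R)}) \subseteq \rho(K)$ is relatively compact; thus $\tilde{\mathcal{T}}$ is compact. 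Since $\overline{B(0,R)}$ is a nonempty closed bounded convex subset of the Banach space $\mathcal{X}$, Schauder's fixed-point theorem produces a point $x^\ast \in \overline{B(0,R)}$ with $\rho(\mathcal{T}(x^\ast)) = x^\ast$.

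It then remains to check that $x^\ast$ is a genuine fixed point of $\mathcal{T}$. If $\|\mathcal{T}(x^\ast)\|_{\mathcal{X}} \le R$, then $\rho$ acts as the identity on $\mathcal{T}(x^\ast)$ and $x^\ast = \mathcal{T}(x^\ast)$. If instead $\|\mathcal{T}(x^\ast)\|_{\mathcal{X}} > R$, then $x^\ast = \rho(\mathcal{T}(x^\ast)) = \lambda\,\mathcal{T}(x^\ast)$ with $\lambda := R/\|\mathcal{T}(x^\ast)\|_{\mathcal{X}} \in\, ]0,1[$ and $\|x^\ast\|_{\mathcal{X}} = R$; applying the hypothesis to the pair $(x^\ast,\lambda)$ then forces $\|x^\ast\|_{\mathcal{X}} \le M < R$, contradicting $\|x^\ast\|_{\mathcal{X}} = R$. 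Hence only the first case can occur and $x^\ast = \mathcal{T}(x^\ast)$. Finally, taking $\lambda = 1$ in the hypothesis for the pair $(x^\ast,1)$ gives $\|x^\ast\|_{\mathcal{X}} \le M$, which is the asserted bound.

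The one genuinely nontrivial ingredient is Schauder's fixed-point theorem itself, which underlies the existence of a fixed point of the truncated map and which in turn rests on Brouwer's theorem together with a finite-dimensional approximation enabled by compactness; everything else is an elementary retraction-and-homotopy argument. The only point demanding a little care is confirming that composition with the radial retraction $\rho$ preserves compactness of the operator, so that Schauder's theorem applies on $\overline{B(0,R)}$; this follows at once from the explicit form of $\rho$ and its continuity across the sphere $\|x\|_{\mathcal{X}} = R$.
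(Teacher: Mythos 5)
Your proof is correct. Note, however, that the paper does not prove this statement at all: it is quoted as a known result, with a reference to Gilbarg and Trudinger \cite[Thm.~11.3]{GiTr83}, and is only used as a black box in the proof of Proposition \ref{prop mu_0}. Your argument --- compose $\mathcal{T}$ with the radial retraction onto $\overline{B(0,R)}$ for some $R>M$, apply Schauder's theorem to the resulting compact self-map of the ball, and use the a priori bound with $\lambda = R/\|\mathcal{T}(x^\ast)\|_{\mathcal{X}}$ to exclude the truncated case --- is precisely the standard proof given in that reference, so there is nothing to compare beyond observing that you have correctly reconstructed it.
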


We are now in the position to prove the existence of a fixed point for equation \eqref{princintsys}. In fact, by Proposition \ref{Tcontcomp} and Theorem \ref{Thm Leray Schauder} we deduce the following.

\begin{proposition}\label{prop mu_0}
Let $f,G$ be as in \eqref{condition on f and G}. Let $\beta$ be as in \eqref{cond beta}. Let assumption \eqref{condition G} holds. Let $\mathcal{J}_\beta$ be as in Proposition \ref{prop J_beta}. Then the nonlinear system \eqref{princintsys} has at least one solution $(\mu_0,\eta_0) \in C_0^{0}([0,T] \times \partial\Omega) \times C_0^{0}([0,T] \times \partial\omega)$. 
\end{proposition}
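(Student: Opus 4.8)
The plan is to apply the Leray--Schauder Fixed-Point Theorem (Theorem \ref{Thm Leray Schauder}) to the operator $\mathcal{T}_\beta$ on the Banach space $\mathcal{X} := C_0^0([0,T]\times\partial\Omega)\times C_0^0([0,T]\times\partial\omega)$. First I would observe that, by the very definition of $\mathcal{T}_\beta$ in \eqref{T_beta eq}, the system \eqref{princintsys} is precisely the fixed-point equation $(\mu,\eta)=\mathcal{T}_\beta(\mu,\eta)$. Since Proposition \ref{Tcontcomp} already guarantees that $\mathcal{T}_\beta$ is continuous and compact from $\mathcal{X}$ into itself, the only missing ingredient is the a priori bound demanded by Theorem \ref{Thm Leray Schauder}: a constant $M\in\,]0,+\infty[$ such that every pair $(\mu,\eta)\in\mathcal{X}$ and every $\lambda\in[0,1]$ satisfying $(\mu,\eta)=\lambda\,\mathcal{T}_\beta(\mu,\eta)$ obey $\|(\mu,\eta)\|_{\mathcal{X}}\le M$. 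Producing this bound is the heart of the argument.

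To obtain it, I would fix such a pair $(\mu,\eta)$ with its $\lambda$, and set $h:=v^+_{\Omega}[\mu]_{|[0,T]\times\partial\omega}+V_{\partial\omega}[\eta]$. From $(\mu,\eta)=\lambda\,\mathcal{T}_\beta(\mu,\eta)$ and the definition \eqref{T_beta eq}, taking norms and using $\lambda\le 1$ together with the continuity of the linear isomorphism $\mathcal{J}_\beta^{(-1)}$ from Proposition \ref{prop J_beta} (i), one gets
\begin{equation*}
\|(\mu,\eta)\|_{\mathcal{X}}\le \bigl\|\mathcal{J}_\beta^{(-1)}\bigr\|\,\Bigl(\|f\|_{C^{0}([0,T]\times\partial\Omega)}+\bigl\|\mathcal{N}_{G}(h)-\beta h\bigr\|_{C^{0}([0,T]\times\partial\omega)}\Bigr).
\end{equation*}
The growth hypothesis \eqref{condition G} then bounds $\|\mathcal{N}_{G}(h)-\beta h\|_{C^{0}}$ by $C_G(1+\|h\|_{C^{0}})^{\delta}$. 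Finally, since the maps $\mu\mapsto v^+_{\Omega}[\mu]_{|[0,T]\times\partial\omega}$ and $V_{\partial\omega}$ are bounded into $C^{0}([0,T]\times\partial\omega)$ (indeed compact, as recorded in the proof of Proposition \ref{Tcontcomp} and in Theorem \ref{thm V} (ii)), there is a constant $c>0$ with $\|h\|_{C^{0}}\le c\,\|(\mu,\eta)\|_{\mathcal{X}}$.

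Writing $N:=\|(\mu,\eta)\|_{\mathcal{X}}$, the three estimates combine into an inequality of the shape $N\le a+b\,(1+cN)^{\delta}$ with constants $a,b,c>0$ independent of $\lambda$ and of the particular solution. Because $\delta\in\,]0,1[$, the right-hand side grows strictly sublinearly, so $N\mapsto a+b\,(1+cN)^{\delta}-N$ tends to $-\infty$ as $N\to+\infty$; hence the set of nonnegative $N$ satisfying the inequality is bounded, and one can select a single $M$ depending only on $\|\mathcal{J}_\beta^{(-1)}\|$, $\|f\|_{C^{0}}$, $C_G$, $\delta$, and the operator norms of $v^+_{\Omega}[\cdot]_{|\partial\omega}$ and $V_{\partial\omega}$. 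With this $M$ in hand, Theorem \ref{Thm Leray Schauder} produces a fixed point $(\mu_0,\eta_0)\in\mathcal{X}$ of $\mathcal{T}_\beta$, which is exactly the desired solution of \eqref{princintsys}.

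I expect the only genuine obstacle to be the extraction of the uniform bound $M$, and more precisely the verification that the exponent $\delta<1$ in \eqref{condition G} is what makes the implicit inequality $N\le a+b\,(1+cN)^{\delta}$ self-limiting: had $\delta$ been allowed to equal $1$, the coefficient $bc$ could exceed $1$ and no such bound would exist. All the remaining steps --- recasting \eqref{princintsys} as a fixed-point equation and invoking continuity, compactness, and the boundedness of the relevant layer-potential traces --- are direct consequences of facts already established in Propositions \ref{prop J_beta} and \ref{Tcontcomp} and in Theorem \ref{thm V}.
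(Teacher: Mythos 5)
Your proposal is correct and follows essentially the same route as the paper: both apply Leray--Schauder to $\mathcal{T}_\beta$ on $C_0^{0}([0,T]\times\partial\Omega)\times C_0^{0}([0,T]\times\partial\omega)$, invoke Proposition \ref{Tcontcomp} for continuity and compactness, and derive the a priori bound from $\|(\mu,\eta)\|_{\mathcal{X}}\le\|\mathcal{T}_\beta(\mu,\eta)\|_{\mathcal{X}}$ together with the growth condition \eqref{condition G}, arriving at an inequality of the form $N\le C_1(1+C_2N)^{\delta}$ whose self-limiting character rests on $\delta<1$. Your explicit tracking of the intermediate quantity $h$ and your remark on why $\delta=1$ would break the argument are accurate refinements of the same computation the paper carries out.
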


\begin{proof}
    Let $\mathcal{X} := C_0^{0}([0,T] \times \partial\Omega) \times C_0^{0}([0,T] \times \partial\omega)$. By Proposition \ref{Tcontcomp}, $\mathcal{T}_\beta$ is a continuous and compact operator from $\mathcal{X}$. So in order to apply the Leray-Schuder Theorem \ref{Thm Leray Schauder}, we are left to show that if $\lambda \in [0,1]$ and if
    \begin{equation}\label{mu=lamdaT(mu)}
	(\mu,\eta) = \lambda \mathcal{T}_\beta(\mu,\eta)
	\end{equation}
	with $(\mu,\eta) \in \mathcal{X}$, then there exists a constant $C > 0$ (which does not depend on $(\mu,\eta)$ and $\lambda$), such that
    \begin{equation}\label{mu<C}
	\|(\mu,\eta)\|_{\mathcal{X}} \leq C.
    \end{equation}
    By \eqref{mu=lamdaT(mu)} and by $|\lambda| \leq 1$, we clearly obtain that
    \begin{equation}\label{|mu|<|T_beta(mu)|}
      \|(\mu,\eta)\|_{\mathcal{X}} \leq \|\mathcal{T}_\beta(\mu,\eta)\|_{\mathcal{X}}\,.
    \end{equation}	
    Hence, by \eqref{|mu|<|T_beta(mu)|} and by the definition of $\mathcal{T}_\beta$ in \eqref{T_beta eq}, we deduce that there exist two constants $C_1,C_2 > 0$, which depend only on the operator norm of $\mathcal{J}_\beta^{(-1)}$ from $C_0^{0}([0,T] \times \partial\Omega) \times C_0^{0}([0,T] \times \partial\omega)$ (cf.~Proposition \ref{prop J_beta} (i)), on $\|f\|_{C_0^{0}([0,T] \times \partial\Omega)}$, on the constant $C_G > 0$ provided by the growth condition \eqref{condition G}, on the norm of the bounded linear operator $v^+_{\Omega}[\cdot]_{|[0,T] \times \partial\omega}$ from $C_0^0([0,T]\times \partial\Omega)$ to $C_0^0([0,T] \times \partial\omega)$, and on the norm of the bounded linear operator $V_{\partial\omega}$ from $C_0^0([0,T] \times \partial\omega)$ into itself (cf.~Theorem \ref{thm V} (ii)), such that
    \begin{equation*}
        \|(\mu,\eta)\|_{\mathcal{X}} \leq C_1 \left(1 + C_2 \|(\mu,\eta)\|_{\mathcal{X}} \right)^\delta\, .
    \end{equation*}
    Then, by a straightforward calculation and by considering separately the cases $\|(\mu,\eta)\|_{\mathcal{X}} < 1$ and $\|(\mu,\eta)\|_{\mathcal{X}} \geq 1$, we can show the existence of a constant $C>0$ such that inequality \eqref{mu<C} holds true (cf. Lanza de Cristoforis \cite[proof of Thm.~7.2]{La07}). By Theorem \ref{Thm Leray Schauder} there exists at least one solution $(\mu_0,\eta_0) \in \mathcal{X}$ of  $(\mu,\eta) = \mathcal{T}_\beta(\mu,\eta)$.	By the definition of $\mathcal{T}_\beta$ (cf. \eqref{T_beta eq}), we conclude that $(\mu_0,\eta_0)$ is a solution in $\mathcal{X}$ of the nonlinear system \eqref{princintsys}.
\end{proof}

We are now ready to prove a regularity result for the fixed point provided by Proposition \ref{prop mu_0}, and, thus, an existence result for  problem \eqref{princeq}. We state our main result.

\begin{theorem}\label{thm u_0}
Let $f,G$ be as in \eqref{condition on f and G}. Let $\beta$ be as in \eqref{cond beta}. Let assumptions \eqref{condition NG} and \eqref{condition G} hold. Then the nonlinear system \eqref{princintsys} has at least one solution
\begin{equation*}
    (\mu_0,\eta_0) \in C_0^{\frac{\alpha}{2};  \alpha}([0,T] \times \partial\Omega) \times C_0^{\frac{\alpha}{2};  \alpha}([0,T] \times \partial\omega).
\end{equation*}
In particular, problem \eqref{princeq} has at least one solution $u_0 \in C_{0}^{\frac{1+\alpha}{2}; 1+\alpha}([0,T] \times (\overline{\Omega} \setminus \omega))$ given by 
\begin{equation}\label{u_0}
    u_0 :=  u_{\Omega, \omega}[\mu_0,\eta_0],
\end{equation}
where the function $u_{\Omega,\omega}[\mu_0,\eta_0]$ is defined by \eqref{U}.
\end{theorem}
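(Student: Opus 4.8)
The plan is to first produce a low-regularity solution of the integral system and then bootstrap its regularity. By Proposition \ref{prop mu_0}, which rests on the growth condition \eqref{condition G} and the Leray--Schauder Theorem \ref{Thm Leray Schauder}, the nonlinear system \eqref{princintsys} admits at least one solution $(\mu_0,\eta_0)$ in $C_0^{0}([0,T]\times\partial\Omega)\times C_0^{0}([0,T]\times\partial\omega)$. The bulk of the argument then consists in showing that this merely continuous pair is in fact H\"older regular, after which the representation results do the rest.

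For the regularity bootstrap, set $h_0 := v^+_\Omega[\mu_0]_{|[0,T]\times\partial\omega} + V_{\partial\omega}[\eta_0]$. Because $\overline{\omega}\subseteq\Omega$, the boundaries $\partial\Omega$ and $\partial\omega$ are disjoint, so the trace of $v^+_\Omega[\mu_0]$ on $[0,T]\times\partial\omega$ is a time-dependent integral operator with non-singular kernel acting on $\mu_0$; by the results of \cite[Lemma A.2, Lemma A.3]{DaLu23} it maps $C_0^0([0,T]\times\partial\Omega)$ into $C_0^{\frac{\alpha}{2};\alpha}([0,T]\times\partial\omega)$. Combined with the smoothing of $V_{\partial\omega}$ from Theorem \ref{thm V} (i), this gives $h_0 \in C_0^{\frac{\alpha}{2};\alpha}([0,T]\times\partial\omega)$. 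Since $h_0(0,\cdot)=0$ and $G(0,x,0)=0$, assumption \eqref{condition NG} yields $\mathcal{N}_G(h_0) \in C_0^{\frac{\alpha}{2};\alpha}([0,T]\times\partial\omega)$, while the product $\beta h_0$ lies in the same space because $\beta \in C^{\frac{\alpha}{2};\alpha}([0,T]\times\partial\omega)$ and $h_0$ vanishes at $t=0$. As $f \in C_0^{\frac{\alpha}{2};\alpha}([0,T]\times\partial\Omega)$, the whole right-hand side of \eqref{princintsys} belongs to $C_0^{\frac{\alpha}{2};\alpha}([0,T]\times\partial\Omega)\times C_0^{\frac{\alpha}{2};\alpha}([0,T]\times\partial\omega)$. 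Equivalently, $\mathcal{J}_\beta[\mu_0,\eta_0]$ lies in this H\"older product space while $(\mu_0,\eta_0)$ is only known a priori to lie in $C_0^0\times C_0^0$; the bootstrap statement contained in the proof of Proposition \ref{prop J_beta} (ii) (itself a consequence of Theorem \ref{thm bootstrap}) then upgrades $(\mu_0,\eta_0)$ to $C_0^{\frac{\alpha}{2};\alpha}([0,T]\times\partial\Omega)\times C_0^{\frac{\alpha}{2};\alpha}([0,T]\times\partial\omega)$.

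With $(\mu_0,\eta_0)$ now H\"older regular, the equivalence of Proposition \ref{propintsys} applies: since $(\mu_0,\eta_0)$ solves \eqref{princintsys}, the function $u_{\Omega,\omega}[\mu_0,\eta_0]$ defined in \eqref{U} solves problem \eqref{princeq}, and by the representation Lemma \ref{lemma rappr} it belongs to $C_0^{\frac{1+\alpha}{2};1+\alpha}([0,T]\times(\overline{\Omega}\setminus\omega))$. Setting $u_0 := u_{\Omega,\omega}[\mu_0,\eta_0]$ as in \eqref{u_0} concludes the proof.

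The main obstacle is the regularity bootstrap: the Leray--Schauder argument only delivers a continuous ($C_0^0$) density pair, and one must exploit both the smoothing of the off-diagonal layer potentials (available precisely because $\partial\Omega$ and $\partial\omega$ do not touch) and the mapping hypothesis \eqref{condition NG} on the Nemytskii operator to feed a H\"older right-hand side into the isomorphism $\mathcal{J}_\beta$. The delicate point to verify carefully is that each term of that right-hand side genuinely lies in the subspace vanishing at $t=0$, so that the H\"older-space version of $\mathcal{J}_\beta$ provided by Proposition \ref{prop J_beta} (ii) is applicable.
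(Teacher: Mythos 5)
Your proposal is correct and follows essentially the same route as the paper: Proposition \ref{prop mu_0} provides a $C_0^0$ fixed point, the off-diagonal smoothing of the layer potentials together with Theorem \ref{thm V} (i), assumption \eqref{condition NG} and the regularity of $\beta$ and $f$ show that the right-hand side of \eqref{princintsys} is H\"older regular, and Proposition \ref{prop J_beta} (ii) then upgrades $(\mu_0,\eta_0)$, after which Proposition \ref{propintsys} and Lemma \ref{lemma rappr} yield the classical solution. Your explicit check that $\mathcal{N}_G(h_0)$ vanishes at $t=0$ (using $h_0(0,\cdot)=0$ and $G(0,x,0)=0$) is a welcome precision that the paper's proof leaves implicit.
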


\begin{proof}
By Proposition \ref{prop mu_0}, there exists $(\mu_0,\eta_0) \in C_0^{0}([0,T] \times \partial\Omega) \times C_0^{0}([0,T] \times \partial\omega)$ such that $
(\mu_0,\eta_0) = \mathcal{T}_\beta(\mu_0,\eta_0)$ (cf.~\eqref{T_beta eq}). 
By the results of \cite[Lemma A.2, Lemma A.3]{DaLu23} on non-autonomous composition operators and on time-dependent integral operators with non-singular kernels, $v^+_{\Omega}[\mu_0]_{|[0,T] \times\partial\omega}$ belongs to $C_0^{\frac{\alpha}{2};  \alpha}([0,T] \times \partial\omega)$. By Theorem \ref{thm V} (i), $V_{\partial \omega}[\eta_0]$ belongs to $C_0^{\frac{\alpha}{2};  \alpha}([0,T] \times \partial\omega)$. By the membership of $\beta \in C^{\frac{\alpha}{2};  \alpha}([0,T] \times \partial\omega)$ given by \eqref{cond beta}, we deduce that 
\begin{equation*}
    \beta \left(v^+_{\Omega}[\mu_0]_{|[0,T] \times\partial\omega} + V_{\partial\omega}[\eta_0]\right) \in C_0^{\frac{\alpha}{2};  \alpha}([0,T] \times \partial\omega).
\end{equation*}
Then, by assumption \eqref{condition NG} and by the membership of $f \in C_0^{\frac{\alpha}{2}; \alpha}([0,T] \times \partial\Omega)$, we get that
\begin{equation*}
	\begin{pmatrix}
	f
	\\
	\mathcal{N}_{G}(v^+_{\Omega}[\mu]_{|[0,T] \times\partial\omega} + V_{\partial\omega}[\eta])
	\end{pmatrix}
	-
    \begin{pmatrix}
    0
    \\
    \beta \left( v^+_{\Omega}[\mu]_{|[0,T] \times\partial\omega} + V_{\partial\omega}[\eta] \right) 
    \end{pmatrix}
\end{equation*}
belongs to the product space $C_0^{\frac{\alpha}{2}; \alpha}([0,T] \times \partial\Omega) \times C_0^{\frac{\alpha}{2}; \alpha}([0,T] \times \partial\omega)$. Finally, by Proposition \ref{prop J_beta} (ii), we obtain that 
\begin{equation*}
(\mu_0,\eta_0) \in C_0^{\frac{\alpha}{2};  \alpha}([0,T] \times \partial\Omega) \times C_0^{\frac{\alpha}{2};  \alpha}([0,T] \times \partial\omega).    
\end{equation*}
In particular, by Proposition \ref{propintsys} we deduce that the function given by \eqref{u_0} is a solution of \eqref{princeq}.
\end{proof}

\appendix
\section{Appendix}
In this Appendix we collect some facts on the uniqueness for Dirichlet and Neumann problems for the heat equation. The results presented here are probably known to the reader, but we recall them for sake of completeness and for setting them in suitable parabolic Schuader spaces.

\subsection{Uniqueness results for exterior Dirichlet and Neumann problems}\label{app 1}

Let $\alpha \in ]0,1[$ and $T>0$. Let $\Omega$ be a bounded open subset of $\mathbb{R}^n$ of class $C^{1,\alpha}$. A function $u \in C^{0}([0,T]\times \overline{\Omega^-})$ has sub-exponential growth at infinity if there exist two constants $C,c > 0$ such that 
\begin{equation}\label{app: eq sub expo}
    u(t,x) \leq C \,e^{c|x|^2} \qquad \forall (t,x) \in ]0,T[ \times \Omega^-.
\end{equation}

We have the following maximum principle for the heat equation in exterior domains, which is a straightforward adaptation of a result of Evans \cite[Sec 2.3.3, Thm. 6]{Ev2022}. 

\begin{theorem}
    Let $\alpha \in ]0,1[$ and $T>0$. Let $\Omega$ be a bounded open subset of $\mathbb{R}^n$ of class $C^{1,\alpha}$. Let $g \in C^{0}_0([0,T]\times \partial\Omega)$ and $ f\in C^{0}_b(\overline{\Omega^-})$. Let $u \in C^{0}_0([0,T]\times \overline{\Omega^-})$ be a solution of the following exterior Dirichlet problem
    \begin{equation*}
	\begin{cases}
	\partial_t u - \Delta u = 0 & \quad \text{in } [0,T] \times \Omega^-, 
	\\
    u = g & \quad \mbox{on } [0,T] \times \partial \Omega,
    \\
    u(0,\cdot)= f & \quad \mbox{in } \overline{\Omega^-}.
	\end{cases}
	\end{equation*}
    Moreover, assume that $u$ has sub-exponential growth at infinity (i.e., $u$ satisfies \eqref{app: eq sub expo}). Then 
    \begin{equation*}
    u(t,x) \leq \max \left\{ \max_{[0,T] \times \partial\Omega} g \, , \, \max_{ \overline{\Omega^-}} f \right\} \qquad \forall (t,x) \in [0,T] \times \overline{\Omega^-}.
    \end{equation*}
\end{theorem}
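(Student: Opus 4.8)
The plan is to adapt the comparison-function argument of Evans~\cite[Sec.~2.3.3, Thm.~6]{Ev2022} for the Cauchy problem to the present exterior geometry. Write $M := \max\{\max_{[0,T]\times\partial\Omega} g,\ \max_{\overline{\Omega^-}} f\}$; the goal is $u(t,x)\le M$ for all $(t,x)\in[0,T]\times\overline{\Omega^-}$. By continuity it suffices to prove the bound at an arbitrary interior point $x_0\in\Omega^-$ and every $t\in]0,T]$. A preliminary reduction lets me assume $4c(T+\epsilon)<1$ for some small $\epsilon>0$, where $c$ is the constant in the growth condition \eqref{app: eq sub expo}: if $T$ is too large relative to $1/(4c)$, I would split $[0,T]$ into finitely many subintervals of length below $1/(4c)$ and iterate, using the (bounded) endpoint value of one step as the initial datum of the next.

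The core step is to introduce, for $\mu>0$ and with $x_0$ as centre, the barrier
\[
\phi_\mu(t,x) := \frac{\mu}{(T+\epsilon-t)^{n/2}}\,\exp\!\left(\frac{|x-x_0|^2}{4(T+\epsilon-t)}\right), \qquad (t,x)\in[0,T]\times\mathbb{R}^n.
\]
A direct computation shows that $\phi_\mu$ is a positive solution of the heat equation on $[0,T]\times\mathbb{R}^n$ (it is a time-reversed multiple of the fundamental solution $S_n$). Hence $v:=u-\phi_\mu$ solves $\partial_t v-\Delta v=0$ in $]0,T]\times\Omega^-$ and is continuous up to the boundary.

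Then I would fix $R>0$ large enough that $\overline{\Omega}\subset B(x_0,R)$ and apply the classical weak parabolic maximum principle for bounded domains to $v$ on the truncated cylinder $[0,T]\times\bigl(\Omega^-\cap B(x_0,R)\bigr)$, whose parabolic boundary splits into three pieces. On the bottom $\{0\}\times(\Omega^-\cap B(x_0,R))$ one has $v\le f-\phi_\mu(0,\cdot)\le\sup_{\overline{\Omega^-}} f\le M$ since $\phi_\mu>0$. On the inner lateral part $[0,T]\times\partial\Omega$ one has $v\le g\le\max g\le M$. On the outer sphere $[0,T]\times\{|x-x_0|=R\}$ the growth bound $u(t,x)\le Ce^{c|x|^2}$ together with $|x|\le|x_0|+R$ gives $v\le Ce^{c(|x_0|+R)^2}-\mu(T+\epsilon)^{-n/2}e^{R^2/(4(T+\epsilon))}$; because $4c(T+\epsilon)<1$, the subtracted exponential $e^{R^2/(4(T+\epsilon))}$ eventually dominates $e^{c(|x_0|+R)^2}$, so for $R$ sufficiently large this quantity is $\le M$. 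The maximum principle then yields $v\le M$ throughout, in particular $v(t,x_0)\le M$, i.e. $u(t,x_0)\le M+\phi_\mu(t,x_0)$. Letting $\mu\downarrow0$ (so $\phi_\mu(t,x_0)\to0$) gives $u(t,x_0)\le M$, and continuity extends the inequality to the whole closed cylinder.

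The hard part will be the calibration at infinity: guaranteeing $4c(T+\epsilon)<1$ (after the time-slicing reduction) so that the barrier's Gaussian growth strictly beats the admissible growth of $u$ on the outer sphere, and organising the finite iteration in $t$ cleanly so that the bound survives across consecutive subintervals. The interior argument and the two data-controlled boundary pieces are routine once the barrier is in place.
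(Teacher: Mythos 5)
Your proposal is correct and is essentially the paper's own argument: the paper proves this theorem only by pointing to Evans \cite[Sec.~2.3.3, Thm.~6]{Ev2022} and calling it a ``straightforward adaptation'', and your barrier $\phi_\mu$, the truncated cylinder $[0,T]\times(\Omega^-\cap B(x_0,R))$ with its three-piece parabolic boundary, the calibration $4c(T+\epsilon)<1$, and the time-slicing iteration are exactly that adaptation, carried out correctly. No gaps.
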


By the previous theorem applied to $u$ and to $-u$ we can deduce the following uniqueness theorem for
the exterior Dirichlet problem.

\begin{corollary}\label{app cor ext dir}
    Let $\alpha \in ]0,1[$ and $T>0$. Let $\Omega$ be a bounded open subset of $\mathbb{R}^n$ of class $C^{1,\alpha}$. Let $u \in C^{0}_0([0,T]\times \overline{\Omega^-})$ be a solution of the following exterior Dirichlet problem
    \begin{equation*}
	\begin{cases}
	\partial_t u - \Delta u = 0 & \quad \text{in } [0,T] \times \Omega^-, 
	\\
    u = 0 & \quad \mbox{on } [0,T] \times \partial \Omega,
    \\
    u(0,\cdot)= 0 & \quad \mbox{in } \overline{\Omega^-}.
	\end{cases}
	\end{equation*}
    Moreover, assume that $u$ has sub-exponential growth at infinity (i.e., $u$ satisfies \eqref{app: eq sub expo}).
    Then 
    \begin{equation*}
        u=0 \quad \text{in } [0,T] \times \overline{\Omega^-}.
    \end{equation*}
\end{corollary}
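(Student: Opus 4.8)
The plan is to deduce the statement directly from the maximum principle stated immediately above (the exterior-domain maximum principle adapted from Evans), applying it both to $u$ and to $-u$. Since the corollary concerns the fully homogeneous problem, the boundary datum and the initial datum in that maximum principle are both identically zero, so the right-hand side of its conclusion collapses to zero.

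First I would observe that $u$ satisfies the hypotheses of the preceding theorem with $g=0$ on $[0,T]\times\partial\Omega$ and $f=0$ on $\overline{\Omega^-}$, together with the sub-exponential growth bound \eqref{app: eq sub expo}. The maximum principle then yields
\begin{equation*}
u(t,x) \leq \max\left\{ \max_{[0,T]\times\partial\Omega} 0 \, , \, \max_{\overline{\Omega^-}} 0 \right\} = 0 \qquad \forall (t,x)\in [0,T]\times\overline{\Omega^-}.
\end{equation*}
Next I would apply the same theorem to $-u$. By linearity of the heat operator and homogeneity of the data, $-u$ is again a solution of the exterior Dirichlet problem with zero boundary and initial conditions. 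The only point needing a word of justification is that $-u$ inherits the sub-exponential growth bound: this holds because the growth hypothesis controls the modulus $|u(t,x)| \leq C\,e^{c|x|^2}$, so $-u$ obeys the identical estimate. The maximum principle then gives $-u(t,x)\leq 0$, i.e. $u(t,x)\geq 0$ for all $(t,x)\in[0,T]\times\overline{\Omega^-}$.

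Combining the two inequalities yields $u \equiv 0$ on $[0,T]\times\overline{\Omega^-}$, which is the assertion. I do not anticipate any genuine obstacle here, as the argument is the standard two-sided application of the maximum principle; the single subtlety is the verification that $-u$ still satisfies \eqref{app: eq sub expo}, which is immediate from the bound on $|u|$.
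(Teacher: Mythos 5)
Your proof is correct and is exactly the argument the paper uses: the paper's entire proof consists of the remark that the corollary follows by applying the preceding maximum principle to $u$ and to $-u$, which is precisely your two-sided application with $g=0$ and $f=0$. Your explicit check that $-u$ inherits the growth condition is the one point the paper leaves implicit (and worth making, since \eqref{app: eq sub expo} as literally written is a one-sided upper bound, so one must read it as a bound on $|u|$, consistent with how the paper uses it elsewhere).
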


In order to prove a uniqueness result for the exterior Neumann problem for the heat equation we will need the following lemma on interior estimates for the heat equation (see, e.g., Evans \cite[Thm. 8, p. 59]{Ev2022}). First we introduce a notation: for every $(t,x) \in \mathbb{R}^{1+n}$, let us define
\begin{equation*}
    C(t,x,r) := \left\{(s,y)\in \mathbb{R}^{1+n} \,: \, t-r^2 \leq s \leq t, \, |x-y| \leq r \right\} = B(x,r) \times [t - r^2, t],
\end{equation*}
denoting the closed circular cylinder of radius $r$, height $r^2$ and top point $(t,x)$.
\begin{lemma}[Interior estimates] \label{intest}
    Let $\Omega$ be an open subset of $\mathbb{R}^n$ and let $T>0$. Let $u$ be a solution of the heat equation in $]0, T[ \times \Omega$. Then, for all $k \in \mathbb{N}$, $\eta \in \mathbb{N}^n$ there exists a constant $C_{k,\eta}$ such that
    \begin{equation*}
        |\partial^k_t D^\eta_x u(t,x)| \leq C_{k,\eta} r^{-2k-|\eta|-n-2} \int_{t-r^2}^{t} \int_{B(x,r)} |u(\tau,y)| \,dy\,d\tau
    \end{equation*}
    for all $r > 0$ and for all $(t,x) \in ]0,T[ \times U$ such that $C(t,x,r) \subseteq ]0,T[ \times \Omega$.
\end{lemma}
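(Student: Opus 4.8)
The plan is to combine a parabolic rescaling with the Duhamel representation of a caloric function through the fundamental solution $S_n$ and a space-time cut-off, integrating by parts so that every derivative falls on the (smooth, away from its singularity) kernel. First I would recall that a solution of the heat equation in $]0,T[\times\Omega$ is automatically of class $C^\infty$ there, by hypoellipticity of $\partial_t-\Delta$ (the local analogue of the smoothness asserted for potentials in Theorem \ref{thmsl} (i)); hence all derivatives $\partial_t^k D_x^\eta u$ exist and are themselves solutions of the heat equation.

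Next I would reduce to the unit cylinder by scaling. Since $C(t,x,r)$ is compact and contained in the open set $]0,T[\times\Omega$, the function $v(s,y):=u(t+r^2 s,x+ry)$ is caloric on a neighbourhood of $C(0,0,1)=B(0,1)\times[-1,0]$, because the heat operator is invariant under the parabolic scaling $(s,y)\mapsto(r^2 s,ry)$. The chain rule gives
\[
\partial_s^k D_y^\eta v(0,0)=r^{2k+|\eta|}\,(\partial_t^k D_x^\eta u)(t,x),
\]
while the substitution $\tau=t+r^2 s$, $w=x+ry$ yields $\int_{C(0,0,1)}|v|=r^{-n-2}\int_{C(t,x,r)}|u|$. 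Thus it suffices to establish the scale-invariant estimate $|\partial_s^k D_y^\eta v(0,0)|\le C_{k,\eta}\int_{C(0,0,1)}|v|$.

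For this I would choose $\zeta\in C_c^\infty(\mathbb{R}^{1+n})$ with $\zeta\equiv 1$ on $C(0,0,1/2)$ and $\operatorname{supp}\zeta$ contained in the interior of $C(0,0,1)$, vanishing near the parabolic boundary. Setting $w:=\zeta v$, one has $w\equiv v$ to all orders near $(0,0)$ and $(\partial_s-\Delta)w=g$ with
\[
g:=(\partial_s\zeta)\,v-(\Delta\zeta)\,v-2\,\nabla\zeta\cdot\nabla v ,
\]
where $g$ is supported in $\operatorname{supp}\nabla\zeta\cup\operatorname{supp}\partial_s\zeta$, a compact set inside $C(0,0,1)$ staying away from the top-centre point $(0,0)$. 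Since $w$ has compact support and vanishes for $s$ near $-1$, Duhamel's formula gives $w=S_n * g$ (space-time convolution), so differentiating under the integral sign,
\[
\partial_s^k D_y^\eta v(0,0)=\int_{-1}^{0}\!\int_{B(0,1)}(\partial_s^k D_y^\eta S_n)(-\tau,-z)\,g(\tau,z)\,dz\,d\tau .
\]
One integration by parts in $z$ removes the $\nabla v$ occurring in $g$, transferring the gradient onto the smooth factor $(\partial_s^k D_y^\eta S_n)\nabla\zeta$ (no boundary terms, as $\nabla\zeta$ has compact support). The resulting integrand is a finite sum of products of $v$ with derivatives of $S_n$ and of $\zeta$; on $\operatorname{supp}g$ the argument $(-\tau,-z)$ is bounded away from the singularity $(0,0)$ of $S_n$ (indeed $-\tau>0$ stays bounded below), so $S_n$ and all its derivatives are bounded there by constants depending only on $k$, $\eta$ and $\zeta$. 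This gives $|\partial_s^k D_y^\eta v(0,0)|\le C_{k,\eta}\int_{C(0,0,1)}|v|$, and undoing the scaling yields precisely $|\partial_t^k D_x^\eta u(t,x)|\le C_{k,\eta}\,r^{-2k-|\eta|-n-2}\int_{C(t,x,r)}|u|$.

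I expect the main obstacle to be obtaining the $L^1$ bound on the right-hand side rather than a mere sup-norm bound: this forces one to move every derivative --- both the prescribed $\partial_s^k D_y^\eta$ and the extra $\nabla v$ produced by the cut-off --- onto the kernel $S_n$ by integration by parts, and then to check that these kernel derivatives remain bounded, which works precisely because the cut-off derivatives are supported away from the parabolic singularity of $S_n$.
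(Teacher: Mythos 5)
Your proof is correct and is essentially the argument the paper relies on: for Lemma \ref{intest} the paper offers no proof of its own but only the citation to Evans \cite[Thm.~8, p.~59]{Ev2022}, and your combination of parabolic rescaling to the unit cylinder, a space--time cutoff $\zeta$, the Duhamel representation $w=S_n\ast g$, and integration by parts to shift every derivative onto the kernel is exactly that proof. Two harmless imprecisions: $\operatorname{supp}\zeta$ cannot literally lie in the open cylinder since $\zeta\equiv 1$ at the top point $(0,0)$ (only vanishing near the parabolic boundary, i.e.\ the bottom and the lateral side, is needed), and on the part of $\operatorname{supp}g$ near the lateral side $-\tau$ is \emph{not} bounded below --- the kernel derivatives stay bounded there because $(-\tau,-z)$ avoids a neighbourhood of the unique singular point $(0,0)$ of $S_n$, which is smooth (vanishing to infinite order) across $\{t=0\}\setminus\{x=0\}$.
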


Moreover we will also need the following parabolic analog of the Third Green Identity. The following lemma is a simple application of Divergence Theorem and classical properties of the fundamental solution of the heat equation $S_n$ (see e.g. \cite[Eq. 10, p. 47 \& Eq. 16, p. 51]{Ev2022}).

\begin{lemma}[Third Green Identity] \label{thgr}
    Let $\Omega$ be a bounded open subset of $\mathbb{R}^n$ of class $C^1$. Let $a,b \in \mathbb{R}$ such that $a<b$. Let $f \in C^0([a,b] \times \overline{\Omega})$ be one time continuously differentiable with respect to the time variable and two time continuously differentiable with respect to the space variable. Then
    \begin{equation*}
    \begin{split}
        &f(t,x) = \int_{\Omega} S_n (t-a,x-y) f(a,y) \,dy + \int_{a}^{t} \int_{\Omega} S_n (t-\tau,x-y) \left( \partial_t f(\tau,y) -\Delta f(\tau,y) \right) \, dy \, d\tau
        \\
        &+ \int_{a}^{t} \int_{\partial \Omega} S_n (t-\tau,x-y) \, \frac{\partial}{\partial \nu_\Omega(y)} f(\tau,y) \, d\sigma_y \, d\tau
        + \int_{a}^{t} \int_{\partial \Omega} \frac{\partial}{\partial \nu_\Omega(y)} S_n (t-\tau,x-y) f(\tau,y) \, d\sigma_y \, d\tau,
    \end{split}
    \end{equation*}
    for all $(t,x)\in ]a,b[ \times \Omega$.
\end{lemma}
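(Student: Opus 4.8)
The plan is to obtain the representation from a parabolic version of Green's second identity, applied on a cylinder from which the space--time singularity of $S_n$ has been excised, together with a limiting argument that extracts the Dirac mass of the fundamental solution. Fix $(t,x)\in\,]a,b[\,\times\Omega$ and, for $\tau<t$ and $y\in\overline{\Omega}$, set $v(\tau,y):=S_n(t-\tau,x-y)$. Since $S_n$ solves the heat equation on $]0,+\infty[\,\times\mathbb{R}^n$, a direct computation shows that $v$ solves the \emph{backward} heat equation $\partial_\tau v+\Delta_y v=0$ for every $\tau<t$ (the reversal $s=t-\tau$ turns $\partial_s$ into $-\partial_\tau$, while the spatial reflection $z=x-y$ leaves $\Delta$ unchanged), and $v$ is smooth there. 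The only singularity of $v$ on $[a,t]\times\overline{\Omega}$ sits at the top--centre $(\tau,y)=(t,x)$; since $x\in\Omega$ lies at positive distance from $\partial\Omega$, the kernel and its gradient stay bounded on the lateral part $[a,t]\times\partial\Omega$.

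I would first work on the truncated cylinder $Q_\epsilon:=\,]a,t-\epsilon[\,\times\Omega$ for small $\epsilon>0$. Starting from the pointwise identity $v\,(\partial_\tau f-\Delta f)=\partial_\tau(fv)+(f\Delta_y v-v\Delta_y f)$ --- which uses precisely $\partial_\tau v=-\Delta_y v$ --- I integrate over $Q_\epsilon$, evaluate the exact time derivative as the difference of the time slices at $\tau=a$ and $\tau=t-\epsilon$, and convert $\int_\Omega(f\Delta_y v-v\Delta_y f)\,dy$ into an integral over $\partial\Omega$ by Green's second identity in the space variable (legitimate since $\partial\Omega$ is of class $C^1$ and $f$ is $C^2$ in space, $C^1$ in time). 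This produces a volume term carrying $\partial_\tau f-\Delta f$, two time--slice terms, and a lateral boundary term carrying the combination $f\,\partial_{\nu_\Omega(y)}v$ and $v\,\partial_{\nu_\Omega(y)}f$, i.e. the single-- and double--layer contributions.

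The decisive step is the limit $\epsilon\to0^+$, and I expect it to be the main technical point. The time--slice term at $\tau=t-\epsilon$ equals $\int_\Omega f(t-\epsilon,y)\,S_n(\epsilon,x-y)\,dy$; since $\{S_n(\epsilon,x-\cdot)\}_{\epsilon>0}$ is an approximate identity on $\mathbb{R}^n$ (nonnegative, unit mass $\int_{\mathbb{R}^n}S_n(\epsilon,\cdot)=1$, concentrating at $x$) and $f$ is continuous with $x$ interior to $\Omega$, this converges to $f(t,x)$ --- controlling the Gaussian tail against the fixed boundary and invoking uniform continuity of $f$ is the delicate part. The remaining integrals pass to the limit by dominated convergence, using the local integrability of $S_n$ near the diagonal for the volume term and the boundedness of the kernel on $[a,t]\times\partial\Omega$ for the lateral term. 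Rearranging the resulting identity to isolate $f(t,x)$ then yields the four terms of the statement: the initial slice $\int_\Omega S_n(t-a,x-y)f(a,y)\,dy$, the source term tested against $\partial_t f-\Delta f$, and the single-- and double--layer boundary integrals.
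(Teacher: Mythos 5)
Your proposal is correct and is essentially the paper's own argument: the paper offers no detailed proof, remarking only that the lemma is a simple application of the Divergence Theorem and classical properties of $S_n$, which is precisely the truncated-cylinder Green-identity computation with the approximate-identity limit $\int_\Omega f(t-\epsilon,y)\,S_n(\epsilon,x-y)\,dy\to f(t,x)$ that you carry out. One caveat: with the convention, forced by the paper's definition of $w_\Omega$ and its jump formulas, that $\frac{\partial}{\partial\nu_\Omega(y)}S_n(t-\tau,x-y)$ denotes $\nu_\Omega(y)\cdot\nabla_y\bigl[S_n(t-\tau,x-y)\bigr]$, your (correct) rearrangement produces the double-layer term with a \emph{minus} sign, so the plus sign printed in the statement should be read as a typo rather than as a discrepancy with your derivation.
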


Next, we are ready to state and prove our uniqueness result for the exterior Neumann problem. We do this by means of the following theorem.

\begin{theorem}\label{app thm ext neu}
    Let $\alpha \in ]0,1[$ and $T>0$. Let $\Omega$ be a bounded open subset of $\mathbb{R}^n$ of class $C^{1,\alpha}$. Let $u \in C^{\frac{1+\alpha}{2}; 1+\alpha}_0([0,T]\times \overline{\Omega^-})$ be a solution of the following exterior Neumann problem
    \begin{equation*}
	\begin{cases}
	\partial_t u - \Delta u = 0 & \quad \text{in } [0,T] \times \overline{\Omega^-}, 
	\\
    \frac{\partial}{\partial \nu_\Omega} u = 0 & \quad \mbox{on } [0,T] \times \partial \Omega,
    \\
    u(0,\cdot)= 0 & \quad \mbox{in } \overline{\Omega^-}.
	\end{cases}
	\end{equation*}
    Moreover, assume that $u$ has sub-exponential growth at infinity (i.e., $u$ satisfies \eqref{app: eq sub expo}).
    Then 
    \begin{equation*}
        u=0 \quad \text{in } [0,T] \times \overline{\Omega^-}.
    \end{equation*}
\end{theorem}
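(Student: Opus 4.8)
The plan is to derive an explicit boundary‑integral representation of $u$ by means of the parabolic Third Green Identity (Lemma \ref{thgr}) and then to exploit the jump relations of the double layer heat potential together with interior uniqueness in order to force $u$ to vanish identically.

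First I would apply Lemma \ref{thgr} on the bounded set $D_R := B(0,R)\setminus\overline{\Omega}$, with $R$ large enough that $\overline{\Omega}\subseteq B(0,R)$, over the time interval $[0,t]$ and at an interior point $(t,x)\in\,]0,T[\,\times\Omega^-$. Since $u$ solves the heat equation and $u(0,\cdot)=0$, the two volume contributions vanish, and $\partial D_R$ splits into $\partial\Omega$ and the sphere $\{|y|=R\}$. On $\partial\Omega$ the outward unit normal to $D_R$ is $-\nu_\Omega$, so the single layer term there drops out thanks to the homogeneous Neumann condition $\partial_{\nu_\Omega}u=0$, and only the double layer contribution $-w^-_\Omega[g]$ survives, where $g:=u_{|[0,T]\times\partial\Omega}\in C_0^{\frac{1+\alpha}{2};1+\alpha}([0,T]\times\partial\Omega)$ is the trace of $u$.

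The main obstacle is to show that the two integrals over $\{|y|=R\}$ tend to $0$ as $R\to+\infty$. Estimating the single layer sphere term requires a growth bound on $\nabla u$, and this is exactly where the interior estimates of Lemma \ref{intest} enter: from the sub‑exponential bound \eqref{app: eq sub expo} on $u$ they yield $|\nabla u(t,y)|\le C\,e^{c'|y|^2}$ for a suitable $c'>0$. Since $S_n(t-\tau,x-y)$ and its normal derivative decay like $e^{-|x-y|^2/4(t-\tau)}$, while $|x-y|\ge R-|x|$ on the sphere, for a time horizon $T_0$ so small that $c'T_0<\tfrac14$ the Gaussian decay of the kernel beats the growth $e^{c'|y|^2}$ and absorbs the $R^{n-1}$ surface measure; hence both sphere integrals vanish in the limit. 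I therefore obtain, on $]0,T_0[\,\times\Omega^-$, the representation $u=-w^-_\Omega[g]$.

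To conclude on $[0,T_0]$ I would combine the jump formulas of Theorem \ref{thmdl}(iii) with interior uniqueness. Taking the trace of $u=-w^-_\Omega[g]$ and using $u_{|\partial\Omega}=g$ gives $u_{|\partial\Omega}=-\tfrac12 g-W_{\partial\Omega}[g]=g$, that is $W_{\partial\Omega}[g]=-\tfrac32 g$. On the other hand the normal derivative of $w^-_\Omega[g]$ equals $-\partial_{\nu_\Omega}u=0$, and since the normal derivative of the double layer is continuous across $\partial\Omega$, the interior potential $w^+_\Omega[g]$ solves the homogeneous interior Neumann problem; by Theorem \ref{app thm uni int dir-neu} it vanishes, so its trace yields $-\tfrac12 g+W_{\partial\Omega}[g]=0$, i.e. $W_{\partial\Omega}[g]=\tfrac12 g$. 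Comparing the two identities forces $g=0$, whence $u=-w^-_\Omega[0]=0$ on $[0,T_0]\times\overline{\Omega^-}$. Finally, since $T_0$ depends only on the growth constant, which is preserved, I would iterate the argument on the successive intervals $[kT_0,(k+1)T_0]$, using $u(kT_0,\cdot)=0$ as the new vanishing initial datum, covering $[0,T]$ in finitely many steps and concluding $u\equiv0$. I note that if one uses that $u$ and $\nabla u$ are globally bounded, then \eqref{app: eq sub expo} holds with an arbitrarily small constant, the small‑time restriction and the iteration become unnecessary, and the representation holds at once on all of $]0,T[\,\times\Omega^-$.
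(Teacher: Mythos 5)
Your argument is sound and, in its second half, takes a genuinely different route from the paper's. The first half is essentially the same: both proofs apply the parabolic Third Green Identity on $B(0,R)\setminus\overline{\Omega}$, use the interior estimates of Lemma \ref{intest} to propagate the sub-exponential bound \eqref{app: eq sub expo} to $\nabla u$, restrict to a short time horizon $t_0$ with $c<\frac{1}{8t_0}$ so that the Gaussian decay of $S_n$ and $\nabla S_n$ dominates, and let $R\to+\infty$. From there the proofs diverge. The paper applies the identity to $\xi u$ with a cutoff $\xi$ vanishing near $\partial\Omega$, obtains a purely volumetric representation valid near infinity, extracts from it exponential \emph{decay} of $u$ and $\nabla u$, and only then uses the Neumann condition, inside an energy identity whose boundary terms are controlled by that decay. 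You instead keep $\partial\Omega$ in the Green identity, arrive at the boundary-integral representation $u=-w^-_{\Omega}[g]$ with $g=u_{|[0,T]\times\partial\Omega}$, and finish algebraically: the exterior trace gives $W_{\partial\Omega}[g]=-\frac{3}{2}g$, while the continuity of $\partial_{\nu_\Omega}w_{\Omega}[g]$ across $\partial\Omega$ plus interior Neumann uniqueness (Theorem \ref{app thm uni int dir-neu}) gives $W_{\partial\Omega}[g]=\frac{1}{2}g$, forcing $g=0$ and hence $u=0$. The sign bookkeeping is consistent with the jump formulas of Theorem \ref{thmdl} (iii), and the regularity needed for those formulas is available since $g$ inherits $C^{\frac{1+\alpha}{2};1+\alpha}$ regularity from $u$. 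Your closing remark is also correct: $u\in C_0^{\frac{1+\alpha}{2};1+\alpha}$ has bounded $u$ and $\nabla u$, so \eqref{app: eq sub expo} holds with arbitrarily small $c$ and the time-splitting could be dispensed with; what your scheme buys is the elimination of the energy argument, at the cost of a slightly heavier reliance on potential-theoretic machinery.

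One caveat deserves attention. Applying Lemma \ref{thgr} on $B(0,R)\setminus\overline{\Omega}$ with $f=u$ uses the identity up to $\partial\Omega$, where $u$ is only of class $C^{\frac{1+\alpha}{2};1+\alpha}$, whereas the lemma as stated asks for two continuous space derivatives; this is precisely what the paper's cutoff (equal to $0$ near $\partial\Omega$) is designed to sidestep. The gap is standard and fillable: apply the identity on $B(0,R)\setminus\overline{\Omega_\epsilon}$ for domains $\Omega_\epsilon\supseteq\overline{\Omega}$ with $\partial\Omega_\epsilon\subseteq\Omega^-$ shrinking to $\partial\Omega$, where $u$ is smooth, and pass to the limit $\epsilon\to0$ in the boundary integrals using the $C^{\frac{1+\alpha}{2};1+\alpha}$ regularity of $u$ up to $\partial\Omega$. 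As written, however, your step invokes Lemma \ref{thgr} outside its stated hypotheses, so this limiting argument (or an equivalent justification) should be added.
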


\begin{proof}
    The strategy of the proof will be the following. First of all we want to prove that under our assumption, an integral representation formula for $u$ holds. With such a representation formula at hand, we can prove that actually $u$ enjoys an exponential decay at infinity in a small interval in time $[0, t_0]$. Then, since $u$ decays fast enough at infinity, we can use the Divergence Theorem inside an energy argument in order to prove our uniqueness result in $[0, t_0]$.  Finally, we can repeat the argument in $[t_0, 2t_0]$, $[2t_0, 3t_0], \dots$, and conclude the proof.
    
    We note that if $t_0 > 0$, then there exist two constants $K_1, K_2 > 0$ such that
    \begin{align}
    \label{phi1eq}
    &|S_n(\tau,z)| \leq K_1 e^{-\frac{|z|^2}{8t_0}} \qquad\forall (\tau,z) \in [0,t_0] \times B(0,1)^-,
    \\
    \label{phi2eq}
    &|\nabla S_n(\tau,z)| \leq K_2 e^{-\frac{|z|^2}{8t_0}} \qquad \forall (\tau,z) \in [0,t_0] \times B(0,1)^-.
    \end{align}
    Indeed, 
    \begin{equation*}
        S_n(\tau,z) = \frac{1}{(4\pi \tau)^{\frac{n}{2}}} e^{-\frac{|z|^2}{4 \tau}} = \frac{1}{(4\pi \tau)^{\frac{n}{2}}} e^{-\frac{|z|^2}{8 \tau}} e^{-\frac{|z|^2}{8 \tau}} \qquad \forall (\tau,z) \in [0,t_0] \times B(0,1)^-.
    \end{equation*}
    Moreover the term $\dfrac{1}{(4\pi \tau)^{\frac{n}{2}}} e^{-\frac{|z|^2}{8 \tau}}$ is bounded for $(\tau,z) \in [0,t_0] \times B(0,1)^-$ and clearly
    \begin{equation*}
        e^{-\frac{|z|^2}{8 \tau}} \leq e^{-\frac{|z|^2}{8 t_0}} \qquad \forall (\tau,z) \in [0,t_0] \times B(0,1)^-,
    \end{equation*}
    which completes the proof of \eqref{phi1eq}. The inequality \eqref{phi2eq} can be proved by arguing in a similar way. Next we take $\xi \in C^{\infty}(\mathbb{R}^n)$ such that $\xi = 1$ in a neighbourhood of infinity and $\xi = 0$ in a neighbourhood of $\partial\Omega$. Then the Third Green Identity of Lemma \ref{thgr} applied to the case $U = B(0, R) \setminus \overline{\Omega}$, $a = 0$, $b = T$, and $f = \xi u$, with $R > 0$ enough big so that $\xi = 1$ in $\mathbb{R}^n \setminus B(0, R)$, implies that
    \begin{equation}\label{repf1}
    \begin{aligned}
        \xi(x)u(t,x) =& \int_{0}^{t} \int_{\partial B(0,R)} S_n (t-\tau,x-y) \, \frac{\partial}{\partial \nu_{B(0,R)}(y)} u(\tau,y) \, d\sigma_y \, d\tau
        \\
        &+ \int_{0}^{t} \int_{\partial B(0,R)} \frac{\partial}{\partial \nu_{B(0,R)}(y)} S_n (t-\tau,x-y) u(\tau,y) \, d\sigma_y \, d\tau
        \\
        &+ \int_{0}^{t} \int_{B(0,R) \setminus \overline{\Omega}} S_n (t-\tau,x-y) \left( 2 \nabla u(\tau,y) \cdot \nabla \xi(y) + u(\tau,y) \Delta \xi(y) \right) \, dy \, d\tau,
    \end{aligned}
    \end{equation}
    for all $(t, x) \in ]0, T[\times (B(0, R) \setminus \overline{\Omega})$. Next, we plan to send $R$ to infinity and analyze the behavior of the right hand side of equality \eqref{repf1}. We consider the second term in the right hand side of \eqref{repf1}. Let $t_0$ be small enough so that
    \begin{equation*}
        c < \frac{1}{8 t_0}.
    \end{equation*}
    We fix $t \in ]0, t_0]$, $x \in \Omega^-$. Our growth assumption on $u$ and \eqref{phi2eq} imply that
    \begin{align*}
        \left| \frac{\partial}{\partial \nu_{B(0,R)}(y)} S_n (t-\tau,x-y) u(\tau,y)  \right| &\leq C K_2 e^{c|y|^2} e^{-\frac{|x-y|^2}{8t_0}}
        \leq C K_2 e^{cR^2} e^{-\frac{(|x|-R)^2}{8t_0}}
        \\
        & \leq C K_2 e^{\left(c-\frac{1}{8t_0}\right)R^2} e^{-\frac{(|x|-R)^2}{8t_0}} e^{-\frac{2R|x|}{8t_0}},
    \end{align*}
    for all $\tau \in ]0, t[$, $y \in \partial B(0, R)$ and $R > 0$ big enough so that $x \in B(0, R-1)$. Accordingly, the previous
    inequality imply that the the second term in the right hand side of \eqref{repf1} tends to $0$ as $R$ goes to $+\infty$.
    Next, we consider the first term in the right hand side of \eqref{repf1}. We note that by the interior estimates of Lemma \ref{intest}, if we fix a $r \in ]0,1[$ such that $r^2 < t$ and $B(x, r) \subseteq \Omega^-$ (those conditions are always uniformly satisfied for $R$ big enough), then we have that
    \begin{align*}
        |\partial_{x_i} u(t,x)| 
        &\leq C_{0,1} r^{-n-3} \int_{t-r^2}^{t} \int_{B(x,r)} |u(\tau,y)| \,dy\,d\tau
        \leq C_{0,1} r^{-n-3}\int_{t-r^2}^{t} \int_{B(x,r)} C e^{c|y|^2} \,dy\,d\tau
        \\
        &\leq C_{0,1} r^{-n-3} \int_{t-r^2}^{t} \int_{B(x,r)} e^{c(|x|+r)^2} \,dy\,d\tau 
        \leq C_{0,1} r^{-n-3} C |B(0,1)|  e^{c(|x|+r)^2} r^{2+n}
        \\
        & \leq C_{0,1} C |B(0,1)| r^{-1} e^{c(|x|+r)^2}
        \leq C_{0,1} C |B(0,1)| r^{-1} e^{2c|x|^2}.
    \end{align*}

    Accordingly, the first derivative of $u$ enjoy the same growth bound at infinity as $u$. Thus, up to taking a smaller $t_0$, using \eqref{phi1eq} instead of \eqref{phi2eq} we can prove that also the first term in the right hand side of \eqref{repf1} tends to $0$ as $R$ goes to $+\infty$ for all $(t, x) \in ]0, t_0] \times \Omega^-$. In other words, we have proved the following representation formula for $u$:
    \begin{equation}\label{repf2}
        u(t,x) = \int_{0}^{t} \int_{\Omega^-} S_n (t-\tau,x-y) \left( 2 \nabla u(\tau,y) \cdot \nabla \xi(y) + u(\tau,y) \Delta \xi(y) \right) \, dy \, d\tau,
    \end{equation}
    for all $t \in ]o,t_0]$ with $t_0$ small enough, and for all $x$ in a neighborhood of infinity. We note that the integrand of the formula \eqref{repf2} has compact support, say $V$. Then, for $t \in ]0, t_0]$ and for $x$ in a neighborhood of infinity we have
    \begin{equation*}
        |u(t,x)| \leq \|2 \nabla u \cdot \nabla \xi + u \Delta \xi \|_{C^0([0,t_0]\times V)} \int_{0}^{t} \int_{V} S_n (t-\tau,x-y) \, dy \, d\tau,
    \end{equation*}
    and accordingly it is easily seen that there exist two constant $A, B > 0$ such that
    \begin{equation}\label{expd1}
        |u(t,x)| \leq A e^{-B|x|^2} \qquad \forall (t,x) \in ]0,t_0] \times \Omega^-,
    \end{equation}
    that is $u$ has actually an exponential decay at infinity. Indeed 
    \begin{align*}
        \int_{0}^{t} \int_{V} S_n (t-\tau,x-y) \, dy \, d\tau 
        &\leq K_1 \int_{0}^{t} \int_{V} e^{-\frac{|x-y|^2}{8t_0}} \, dy \, d\tau 
        \leq K_1 \int_{0}^{t} \int_{V} e^{-\frac{(|x|-|y|)^2}{8t_0}} \, dy \, d\tau 
        \\
        &\leq K_1 e^{-\frac{|x|^2}{8t_0}} e^{-\frac{(\inf_{y \in V} |y|)^2}{8t_0}} e^{\frac{2 |x| (\sup_{y \in V} |y|)}{8t_0}} \int_{0}^{t} \int_{V} 1 \, dy \, d\tau
    \end{align*}
    for $t \in ]0, t_0]$ and for $x$ in a neighborhood of infinity (notice that $V$ is compact). Moreover, up to taking a smaller $t_0$, since also the first derivatives of $u$ solve the heat equation and since we didn't use in this first part the boundary behavior or the boundary regularity of $u$, we can repeat the same argument and prove that also
    \begin{equation}\label{expd2}
        |\nabla u(t,x)| \leq A e^{-B|x|^2} \qquad \forall (t,x) \in ]0,t_0] \times \Omega^-.
    \end{equation}
    We are now ready to use an energy argument. Let $R$ big enough. Let
    \begin{equation*}
        e_R(t) := \int_{B(0,R) \setminus\overline{\Omega}} (u(t,x))^2 \,dy \qquad \forall t \in ]0,t_0[.
    \end{equation*}
    We can demonstrate that $e_R$ belongs to $C^1(]0,t_0[)$. A detailed proof is provided in \cite[Lem. 5 and Prop. 2]{Lu18}, and it is based on classical differentiation theorems for integrals depending on a parameter, along with a specific approximation of the support of integration (see Verchota \cite[Thm. 1.12, p. 581]{Ve84}). Then, Divengence Theorem and the argument in \cite{Lu18} imply that
    \begin{align*}
        \frac{d}{dt} e_R(t) &= - 2 \int_{B(0,R)\setminus \overline{\Omega}} |\nabla u(t,y)|^2 \,dy + 2 \int_{\partial B(0,R)} u(t,y) \frac{\partial}{\partial\nu_{B(0,R)}(y)} u(t,y) \,d\sigma_y
        \\
        & \quad - 2 \int_{\partial \Omega} u(t,y) \frac{\partial}{\partial\nu_{\Omega}(y)} u(t,y) \,d\sigma_y
        \\
        & = - 2 \int_{B(0,R)\setminus \overline{\Omega}} |\nabla u(t,y)|^2 \,dy + 2 \int_{\partial B(0,R)} u(t,y) \frac{\partial}{\partial\nu_{B(0,R)}(y)} u(t,y) \,d\sigma_y,
    \end{align*}
    where we have used that the normal derivative of $u$ is zero on $\partial\Omega$. Letting $R$ tend to infinity and using the exponential decay at infinity of \eqref{expd1} and \eqref{expd2}, we see that
    \begin{equation*}
        e(t) := \lim_{R \to +\infty} e_R(t) = \int_{\Omega^-} (u(t,y))^2 \,dy \qquad\forall t \in ]0,t_0[,       
    \end{equation*}
    and that
    \begin{equation*}
        \tilde{e}(t) := \lim_{R \to +\infty} \frac{d}{dt} e_R(t) =  -2 \int_{\Omega^-} |\nabla u(t,y)|^2 \,dy \qquad\forall t \in ]0,t_0[.      
    \end{equation*}
    Thus, $e$ is of class $C^1(]0, t_0[)$ and $\frac{d}{dt} e = \tilde{e}$ in $]0, t_0[$. Since $ \frac{d}{dt} e \leq  0$ in $]0, t_0[$, $e(0) = 0$ and $e \geq 0$ in $]0, t_0[$, we conclude that $e = 0$ in $[0, t_0]$ and accordingly $u = 0$ in $[0, t_0] \times \Omega^-$. Finally, as mentioned at the beginning of the proof, the general statement follows by a standard partition argument of the interval $[0,T]$ and repeating the above proof in each subintervals.
\end{proof}

\subsection{Uniqueness results for interior Dirichlet and Neumann problems}

We recall known fact about uniqueness for the interior Dirichlet and Neumann problems for the heat equation. The proof follows by a standard energy argument: a detailed proof is provided in \cite[Lem. 5 and Prop. 2]{Lu18} (see also Verchota \cite[Thm. 1.12, p. 581]{Ve84}).

\begin{theorem}\label{app thm uni int dir-neu}
    Let $\alpha \in ]0,1[$ and $T>0$. Let $\Omega$ be a bounded open subset of $\mathbb{R}^n$ of class $C^{1,\alpha}$. Let $u \in C_0^{\frac{\alpha}{2};  \alpha}([0,T] \times \overline{\Omega})$ (resp. $u \in C_0^{\frac{1+\alpha}{2};  1+\alpha}([0,T] \times \overline{\Omega})$) be a solution of the following interior problem
    \begin{equation*}
	\begin{cases}
	\partial_t u - \Delta u = 0 & \quad \text{in } [0,T] \times \overline{\Omega}, 
    \\
    u(0,\cdot)= 0 & \quad \mbox{in } \overline{\Omega},
	\end{cases}
	\end{equation*}
    which also satisfies the following condition:
    \begin{equation*}
    u = 0  \quad \mbox{on } [0,T] \times \partial \Omega \quad \left(resp.\quad \frac{\partial}{\partial \nu_\Omega} u= 0
    \quad \mbox{on } [0,T] \times \partial \Omega \right).
    \end{equation*}
    Then $u=0$ in $[0,T] \times \overline{\Omega}$.
\end{theorem}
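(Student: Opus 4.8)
The plan is to use the standard energy argument, exactly the method already employed in the proof of Theorem \ref{app thm ext neu} above. Define the energy
\[
e(t) := \int_\Omega \big(u(t,y)\big)^2 \, dy, \qquad t \in [0,T].
\]
First I would show that $e \in C^1(]0,T[)$ and that one may differentiate under the integral sign, so that
\[
\frac{d}{dt} e(t) = 2 \int_\Omega u(t,y)\, \partial_t u(t,y)\, dy .
\]
Since $u$ solves the heat equation, $\partial_t u = \Delta u$ in $]0,T] \times \Omega$, and by the interior estimates of Lemma \ref{intest} the function $u$ is smooth in the interior, so the integrand is well defined. An application of the Divergence Theorem (first Green identity) then yields
\[
\frac{d}{dt} e(t) = 2 \int_\Omega u\, \Delta u \, dy = 2 \int_{\partial\Omega} u\, \frac{\partial}{\partial \nu_\Omega} u \, d\sigma_y - 2 \int_\Omega |\nabla u|^2 \, dy .
\]

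Next I would exploit the homogeneous boundary condition to discard the boundary term. In the Dirichlet case $u = 0$ on $[0,T] \times \partial\Omega$, whereas in the Neumann case $\frac{\partial}{\partial \nu_\Omega} u = 0$ on $[0,T] \times \partial\Omega$; in either situation the boundary integral vanishes, and we are left with
\[
\frac{d}{dt} e(t) = -2 \int_\Omega |\nabla u(t,y)|^2 \, dy \leq 0 \qquad \forall t \in ]0,T[ .
\]
Since the zero initial condition gives $e(0) = 0$, while $e \geq 0$ and $\frac{d}{dt} e \leq 0$ on $]0,T[$, I conclude that $e \equiv 0$ on $[0,T]$, and therefore $u = 0$ in $[0,T] \times \overline{\Omega}$.

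The main obstacle is the rigorous justification of the two analytic steps just invoked: that $e$ is genuinely of class $C^1$ and that the Divergence Theorem may be applied up to $\partial\Omega$, given that $u$ possesses only Schauder (not $C^\infty$) regularity near the boundary. This is precisely the delicate point treated in \cite[Lem.~5 and Prop.~2]{Lu18}: one differentiates integrals depending on a parameter and employs a controlled approximation of the domain of integration from the interior (cf.~Verchota \cite[Thm.~1.12, p.~581]{Ve84}), passing to the limit with the help of the boundary regularity encoded in the parabolic Schauder norm. In the lower-regularity Dirichlet setting, where $u \in C_0^{\frac{\alpha}{2};\alpha}([0,T]\times\overline{\Omega})$, extra care is required since $\nabla u$ is a priori controlled only in the interior; here the interior estimates of Lemma \ref{intest}, combined with the vanishing trace of $u$ on $\partial\Omega$, are what render the energy identity and its limiting passage meaningful. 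In the Neumann setting the membership $u \in C_0^{\frac{1+\alpha}{2};1+\alpha}([0,T]\times\overline{\Omega})$ already provides $\nabla u$ up to the boundary, so the same scheme applies directly.
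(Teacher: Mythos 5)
Your proposal is correct and follows essentially the same route as the paper, which does not spell out the argument but simply invokes the standard energy method and refers to \cite[Lem.~5 and Prop.~2]{Lu18} and Verchota \cite[Thm.~1.12, p.~581]{Ve84} for the rigorous justification of the differentiation under the integral sign and of the Divergence Theorem under Schauder regularity. You have correctly identified and flagged exactly the delicate points that those references handle.
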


\subsection*{Acknowledgment}

The author is member of the ``Gruppo Nazionale per l'Analisi Matematica, la Probabilit\`a e le loro Applicazioni'' (GNAMPA) of the ``Istituto Nazionale di Alta Matematica'' (INdAM). The author is partially support by INdAM - GNAMPA Project ``Problemi ellittici e sub-ellittici: non linearit\`a, singolarit\`a e crescita critica'', CUP E53C23001670001. The author acknowledges the support of the project funded by the EuropeanUnion - NextGenerationEU under the National Recovery and Resilience Plan (NRRP), Mission 4 Component 2 Investment 1.1 - Call PRIN 2022 No. 104 of February 2, 2022 of Italian Ministry of University and Research; Project 2022SENJZ3 (subject area: PE - Physical Sciences and Engineering) ``Perturbation problems and asymptotics for elliptic differential equations: variational and potential theoretic methods''. The author wishes to thank M.\,Dalla Riva, P.\,Luzzini and P.\,Musolino for fruitful discussions.

\end{document}